\numberwithin{equation}{section}
\newcommand{\vanish}[1]{}
\def\bbar#1{\setbox0=\hbox{$#1$}\dimen0=.2\ht0 \kern\dimen0 }
\DeclareSymbolFont{cyrletters}{OT2}{wncyr}{m}{n}
\DeclareMathSymbol{\Sha}{\mathalpha}{cyrletters}{"58}
\newcommand{\F}{{\mathbb F}}  
\newcommand{\G}{{\mathbb G}}
\newcommand{\PP}{{\mathbb P}}
\newcommand{\Q}{{\mathbb Q}}
\newcommand{\ZZ}{{\mathbb Z}}
\newcommand{\Zhat}{{\widehat{\ZZ}}}
\def\bbar#1{\setbox0=\hbox{$#1$}\dimen0=.2\ht0 \kern\dimen0 \overline{\kern-\dimen0 #1}}
\newcommand{\Qbar}{{\overline{\mathbb Q}}}
\newcommand{\calF}{{\mathcal F}}
\DeclareMathOperator{\Aut}{Aut}
\DeclareMathOperator{\Gal}{Gal}
\newcommand{\tors}{{\operatorname{tors}}}
\newcommand{\GL}{\operatorname{GL}}
\newcommand{\SL}{\operatorname{SL}}
\newcommand{\injects}{\hookrightarrow}
\newcommand{\intersect}{\cap} 
\newcommand{\union}{\cup}
\definecolor{webcolor}{rgb}{0,0,1}
\definecolor{webbrown}{rgb}{.6,0,0}
\newtheorem{theorem}{Theorem}[section]
\newtheorem{lemma}[theorem]{Lemma}
\newtheorem{Corollary}[theorem]{Corollary}
\newtheorem{proposition}[theorem]{Proposition}
\theoremstyle{definition}
\newtheorem{definition}[theorem]{Definition}
\newtheorem*{Mazur's Program B}{Mazur's Program B}
\theoremstyle{remark}
\crefname{section}{§}{§§}
\crefname{lemma}{Lemma}{Lemmas}
\crefname{equation}{equation}{equations}
\crefname{theorem}{Theorem}{Theorems}
\crefname{proposition}{Proposition}{Propositions}
\crefname{Corollary}{Corollary}{Corollaries}
\date{\today}
\begin{document}

\title{ On possibilities of 3-adic Galois images associated to isogeny-torsion graphs}    
\author{Rakvi}
\address{Department of Mathematics, The University of Maine, Orono, Maine 04469}
\email{rakvi.lecturer@maine.edu}


\begin{abstract} 
Let $E$ be a non CM elliptic curve defined over $\Q$. There is an isogeny-torsion graph associated to $E$ and there is also a Galois representation $\rho_{E,l^{\infty}} \colon \Gal(\Qbar/\Q) \to \GL_2(\ZZ_{\ell})$ associated to $E$ for every prime $\ell.$ In this article, we explore the relation between these two objects when $\ell=3$. More precisely, we give a classification of 3-adic Galois images associated to vertices of the isogeny-torsion graph of $E.$

\end{abstract}

\maketitle
\setcounter{tocdepth}{1}
\section{Introduction}

\subsection{Galois Representations}
Let $E/\Q$ be an elliptic curve defined over $\Q.$ Fix an algebraic closure $\Qbar$ of $\Q.$ Define $\G_{\Q} \coloneqq \Gal(\Qbar/\Q).$ Let $N \ge 1$ be a natural number. We will denote the set of $N$ torsion points of $E$ by $E[N]$. It is a free module over $\ZZ/N\ZZ$ of rank $2.$ By choosing a basis for $E[N]$ we can identify $\Aut(E[N])$ with $\GL_2(\ZZ/N\ZZ).$ Since $E$ is defined over $\Q$, there is a natural Galois action of $\G_{\Q}$ on $E[N]$ which gives rise to a mod $N$ representation \[ \rho_{E,N} \colon \G_{\Q} \to \Aut(E[N]) \simeq \GL_2(\ZZ/N\ZZ).\] 

For a prime $p$, let $\ZZ_p$ denote the ring of $p$-adic integers. For a composite $N$, define $\ZZ_N \coloneqq \prod_{\ell \mid N} \GL_2(\ZZ_{\ell}).$ By choosing a compatible system of bases for all $N$ bigger than or equal to $1$, we get the adelic representation associated to $E$ 
\[ \rho_{E} \colon \G_{\Q} \to \varprojlim_{N \ge 1} \GL_2(\ZZ/N\ZZ) \simeq \GL_2(\Zhat) \simeq \prod_{\ell} \GL_2(\ZZ_{\ell}).\] We get the $N$-adic representation \[\rho_{E,N^{\infty}} \colon \G_{\Q} \to \GL_2(\ZZ_N)\] by taking the composition of $\rho_E$ with the projection map $\pi_{N^{\infty}} \colon \GL_2(\Zhat) \to \GL_2(\ZZ_N).$ 

While the question of giving a complete classification of adelic images i.e.,  $\rho_E(\G_{\Q})$ for all $E/\Q$ without CM is still open, $\ell$-adic images i.e., $\rho_{E,l^{\infty}}(\G_{\Q})$ for all $E/\Q$ without CM have been determined. In 2015, Rouse--Zureick-Brown \cite{MR3500996} gave a classification of 2-adic images and in 2021, Rouse--Sutherland--Zureick-Brown \cite{MR4468989} gave a classification of $\ell$-adic images for odd primes $\ell.$ In a recent breakthrough article of Zywina \cite{2206.14959} he gives an algorithm to compute $\rho_E(\G_{\Q})$ given an $E/\Q$ without CM. 

\subsection{Isogenies}
Let $E/\Q$ be an elliptic curve defined over $\Q$.  If $E$ is without CM, then it can be shown each elliptic curve that is isogenous to $E$ is also without CM. Let us call the set of all elliptic curves $E'/\Q$ that are isogenous to $E$ taken up to isomorphism over the $\Q$-isogeny class of $E$ and denote it by $\frak{E}.$ The isogeny graph associated to $\frak{E}$ is a connected graph whose vertices are elliptic curves in $\frak{E}$, two vertices are adjacent if and only if there is an isogeny of prime degree between them with the isogeny degree as the label of a connecting edge. Between any two vertices in an isogeny graph there is a unique degree of cyclic isogeny between them. The isogeny-torsion graph associated to $\frak{E}$ is an isogeny graph where vertices are labelled with their torsion group structure over $\Q.$ 
In 2021, Chiloyan and Lozano-Robledo \cite{chiloyan2021classification} gave a classification of isogeny-torsion graphs associated to all elliptic curves defined over $\Q.$ We will borrow their notation to denote these graphs. For example, we will say that an isogeny graph is of type $L_2(p)$ if there are exactly two vertices in $\frak{E}$ and they are connected with an edge of label $p$ for $p \in \{2,3,5,7,11,13,17,19,37,43,67,163\}.$ 

A motivation behind this article is to understand the relationship between Galois images and isogeny. More precisely, we give a classification of 3-adic images that can occur at vertices of isogeny-torsion graphs associated to elliptic curves without CM defined over $\Q.$ In related works, Chiloyan \cite{2302.06094}, \cite{2302.06094} gives a classification of 2-adic images that can occur at vertices of isogeny-torsion graphs associated to elliptic curves defined over $\Q.$

We will use RSZB format of label to denote subgroups of $\GL_2(\Zhat).$ For more details on this label, please refer to section 2.4 of \cite{MR4468989}.
Our main Theorem is stated below.

\begin{theorem} \label{thm:mainthm}
    Let $E$ be an elliptic curve defined over $\Q$ without CM. Then, the possibilities for the 3-adic images that can occur at the vertices of isogeny torsion graph associated to $E$ are as follows:

    \begin{table}[H] 
\caption{Main Table}
\label{Table:maintable} 
\renewcommand{\arraystretch}{1.3}
\vspace{0.5 cm}
\begin{minipage}{\textwidth} 
\renewcommand*\footnoterule{}
\centering
\begin{tabular}{|l|l|}
\hline
Isogeny-torsion graph    & Possibilities for 3-adic images of vertices \\ \hline

$L_1$ &   Table \ref{Table:L_1}   \\ \hline
$L_2(2)$ & Table \ref{Table:L_2(2)} \\ \hline
$L_2(3)$ & Table \ref{Table:L_2(3)} \\ \hline
$L_2(5)$ &  Table \ref{Table:L_2(5)} \\ \hline
$L_2(p)$ for $p \in \{7,11,13,17,37\}$       &  Each vertex has $\GL_2(\ZZ_3)$ as 3-adic image \\ \hline
$L_3(9)$ & Table \ref{Table:L_3(9)} \\ \hline
$L_3(25)$ & Each vertex has $\GL_2(\ZZ_3)$ as 3-adic image \\ \hline
$R_4(6)$ & Table \ref{Table:R_4(6)} \\ \hline
$R_4(10)$ & Each vertex has $\GL_2(\ZZ_3)$ as 3-adic image \\ \hline
$R_4(15)$ & Table \ref{Table:R_4(15)} \\ \hline
$R_4(21)$ & Table \ref{Table:R_4(21)} \\ \hline
$R_6$ & Table \ref{Table:R_6} \\ \hline
$T_k$ for $k \in \{4,6,8\}$ & Each vertex has $\GL_2(\ZZ_3)$ as 3-adic image \\ \hline
$S$ & Table \ref{Table:S} \\ \hline

    \end{tabular}
    \end{minipage}
\end{table}
\end{theorem}
 \begin{table}[H] 
\caption{Table for $L_1$}
\label{Table:L_1} 
\renewcommand{\arraystretch}{1.3}
\vspace{0.5 cm}
\begin{minipage}{\textwidth} 
\renewcommand*\footnoterule{}
\centering
\begin{tabular}{|l|l|l|l|}
\hline
Type    & Torsion arrangement & 3-adic images at the vertices    & Example \\ \hline

$L_1$ & \texttt{Trivial}   & $\GL_2(\ZZ_3)$ &   \texttt{37.a}     \\ \hline
    &           &  3.3.0.1     &   \texttt{245.a}     \\ \hline
    &           &  3.6.0.1     &   \texttt{1210.d}     \\ \hline
    &           &  9.9.0.1     &   \texttt{864.e}     \\ \hline
    &           &  9.18.0.1     &   \texttt{2057.c}     \\ \hline
    &           &  9.18.0.2     &   \texttt{22898.d}     \\ \hline
    &           &  9.27.0.1     &   \texttt{1944.e}     \\ \hline
    &           &  9.27.0.2     &   \texttt{118579.b}     \\ \hline

    \end{tabular}
    \end{minipage}
\end{table}

\begin{table}[H] 
\caption{Table for $L_2(2)$}
\label{Table:L_2(2)} 
\renewcommand{\arraystretch}{1.3}
\vspace{0.5 cm}
\begin{tabular}{|l|l|l|l|}
\hline
Type    & Torsion arrangement & 3-adic images at the vertices    & Example \\ \hline

$L_2(2)$& $(\ZZ/2\ZZ , \ZZ/2\ZZ)$ &  $(\GL_2(\ZZ_3),\GL_2(\ZZ_3))$     &   \texttt{46.a}     \\ \hline
        &                         &   (3.3.0.1,3.3.0.1) & \texttt{1568.a}    \\ \hline
        &                          &  (3.6.0.1,3.6.0.1)  & \texttt{726.b}     \\ \hline

    \end{tabular}
\end{table}

\begin{table}[H]  
\caption{Table for $L_2(3)$}
\label{Table:L_2(3)}
\renewcommand{\arraystretch}{1.3}
\vspace{0.5 cm}
\begin{tabular}{|l|l|l|l|}
\hline
Type    & Torsion arrangement & 3-adic images at the vertices    & Example \\ \hline

$L_2(3)$ &  \texttt{(Trivial,Trivial)} & (3.4.0.1,3.4.0.1) & \texttt{176.a} \\ \hline
         &                    & (9.12.0.2,9.12.0.2) & \texttt{196.a} \\ \hline
         &                    & (9.36.0.7, 9.36.0.9) & \texttt{1734.k} \\ \hline
         &                     & (9.36.0.8,9.36.0.8) & \texttt{17100.r} \\\hline
         &    $(\ZZ/3\ZZ, \texttt{Trivial})$ & (3.8.0.1,3.8.0.2) & \texttt{44.a} \\\hline
         &                      &    (9.24.0.2,9.24.0.4)  & \texttt{196.b}\\\hline
        &                      &    (9.72.0.8,9.72.0.16)  & \texttt{486.d}\\\hline
        &                      & (9.72.0.9,9.72.0.15) &   \texttt{17100.j}     \\ \hline
        &                      &    (9.72.0.10,9.72.0.14)  & \texttt{486.c} \\\hline       

    \end{tabular}
\end{table}

\begin{table}[H] 
\caption{Table for $L_2(5)$}
\label{Table:L_2(5)} 
\renewcommand{\arraystretch}{1.3}
\vspace{0.5 cm}
\begin{tabular}{|l|l|l|l|}
\hline
Type    & Torsion arrangement & 3-adic images at the vertices    & Example \\ \hline

$L_2(5)$ &  \texttt{(Trivial,Trivial)} & $(\GL_2(\ZZ_3),\GL_2(\ZZ_3))$ & \texttt{75.c} \\ \hline
         &                    & (3.3.0.1,3.3.0.1) & \texttt{1369.a} \\ \hline
         &                    & (3.6.0.1, 3.6.0.1) & \texttt{338.b} \\ \hline
         &                     & (9.9.0.1,9.9.0.1) & \texttt{43264.f} \\\hline
         &    $(\ZZ/5\ZZ, \texttt{Trivial})$ & $(\GL_2(\ZZ_3),\GL_2(\ZZ_3))$ & \texttt{38.b} \\\hline

    \end{tabular}
\end{table}

 \begin{table}[H] 
\caption{Table for $L_3(9)$}
\label{Table:L_3(9)} 
\renewcommand{\arraystretch}{1.3}
\vspace{0.5 cm}
\begin{tabular}{|l|l|l|l|}
\hline
Type    & Torsion arrangement & 3-adic images at the vertices    & Example \\ \hline

$L_3(9)$ & \texttt{(Trivial,Trivial,Trivial)}   & (9.12.0.1,3.12.0.1,9.12.0.1) &   \texttt{175.b}     \\ \hline
    &           &  (9.36.0.5,9.36.0.1,9.36.0.4)     &   \texttt{432.b}     \\ \hline
    &           &  (9.36.0.6,9.36.0.3,9.36.0.6)    &   \texttt{22491.u}     \\ \hline
    &           &  (27.36.0.1,9.36.0.2,27.36.0.1)     &   \texttt{304.c}     \\ \hline
    & $(\texttt{Trivial},\ZZ/3\ZZ,\ZZ/3\ZZ)$          &  (9.24.0.3,3.24.0.1,9.24.0.1)     &   \texttt{26.a}     \\ \hline
    &           &  (9.72.0.11,9.72.0.2,9.72.0.6)    &   \texttt{54.a}     \\ \hline
    &           &  (9.72.0.13,9.72.0.4,9.72.0.7)     &   \texttt{3213.k}     \\ \hline
    &           &  (27.72.0.2,9.72.0.3,27.72.0.1)     &   \texttt{19.a}     \\ \hline
    &  $(\texttt{Trivial},\ZZ/3\ZZ,\ZZ/9\ZZ)$          &  (9.72.0.12,9.72.0.1,9.72.0.5)     &  \texttt{54.b}    \\ \hline

\end{tabular}
\end{table}

 \begin{table}[H] 
\caption{Table for $R_4(6)$}
\label{Table:R_4(6)} 
\renewcommand{\arraystretch}{1.3}
\vspace{0.5 cm}
\begin{tabular}{|l|l|l|l|}
\hline
Type    & Torsion arrangement & 3-adic images at the vertices    & Example \\ \hline

$R_4(6)$ & $(\ZZ/2\ZZ,\ZZ/2\ZZ,\ZZ/2\ZZ,\ZZ/2\ZZ)$  & (3.4.0.1,3.4.0.1,3.4.0.1,3.4.0.1) &   \texttt{80.b}     \\ \hline
    & $(\ZZ/2\ZZ,\ZZ/2\ZZ,\ZZ/6\ZZ,\ZZ/6\ZZ)$          &  (3.8.0.2,3.8.0.2,3.8.0.1,3.8.0.1)     &   \texttt{20.a}     \\ \hline
\end{tabular}
\end{table}

\begin{table}[H] 
\caption{Table for $R_4(15)$}
\label{Table:R_4(15)} 
\renewcommand{\arraystretch}{1.3}
\vspace{0.5 cm}
\begin{tabular}{|l|l|l|l|}
\hline
Type    & Torsion arrangement & 3-adic images at the vertices    & Example \\ \hline

$R_4(15)$ & $(\texttt{Trivial},\texttt{Trivial},\texttt{Trivial},\texttt{Trivial})$  & (3.4.0.1,3.4.0.1,3.4.0.1,3.4.0.1) &   \texttt{400.d}     \\ \hline
         & $(\ZZ/5\ZZ,\ZZ/5\ZZ,\texttt{Trivial},\texttt{Trivial})$  & (3.4.0.1,3.4.0.1,3.4.0.1,3.4.0.1) &   \texttt{50.b}     \\ \hline
    & $(\ZZ/3\ZZ,\ZZ/3\ZZ,\texttt{Trivial},\texttt{Trivial})$          &  (3.8.0.1,3.8.0.1,3.8.0.2,3.8.0.2)     &   \texttt{50.a}     \\ \hline
\end{tabular}
\end{table}

\begin{table}[H] 
\caption{Table for $R_4(21)$}
\label{Table:R_4(21)} 
\renewcommand{\arraystretch}{1.3}
\vspace{0.5 cm}
\begin{tabular}{|l|l|l|l|}
\hline
Type    & Torsion arrangement & 3-adic images at the vertices    & Example \\ \hline

$R_4(21)$ & $(\texttt{Trivial},\texttt{Trivial},\texttt{Trivial},\texttt{Trivial})$  & (3.4.0.1,3.4.0.1,3.4.0.1,3.4.0.1) &   \texttt{1296.f}     \\ \hline
        
    & $(\ZZ/3\ZZ,\ZZ/3\ZZ,\texttt{Trivial},\texttt{Trivial})$          &  (3.8.0.1,3.8.0.1,3.8.0.2,3.8.0.2)     &   \texttt{162.b}     \\ \hline
\end{tabular}
\end{table}

\begin{table}[H] 
\caption{Table for $R_6$}
\label{Table:R_6} 
\renewcommand{\arraystretch}{1.3}
\vspace{0.5 cm}
\begin{tabular}{|l|l|l|l|}
\hline
Type    & Torsion arrangement & 3-adic images at the vertices    & Example \\ \hline

$R_6$ & $(\ZZ/2\ZZ,\ZZ/2\ZZ,\ZZ/2\ZZ,\ZZ/2\ZZ,\ZZ/2\ZZ,\ZZ/2\ZZ)$  & \tiny{(9.12.0.1,9.12.0.1,3.12.0.1,3.12.0.1,9.12.0.1,9.12.0.1)} &   \texttt{98.a}     \\ \hline
        
    & $(\ZZ/2\ZZ,\ZZ/2\ZZ,\ZZ/6\ZZ,\ZZ/6\ZZ,\ZZ/6\ZZ,\ZZ/6\ZZ)$          &  \tiny{(9.24.0.3,9.24.0.3,3.24.0.1,3.24.0.1,9.24.0.1,9.24.0.1)}     &   \texttt{14.a}     \\ \hline
\end{tabular}
\end{table}

\begin{table}[H] 
\caption{Table for $S$}
\label{Table:S} 
\renewcommand{\arraystretch}{1.3}
\vspace{0.5 cm}
\begin{tabular}{|l|l|l|l|}
\hline
Type    & Torsion arrangement & 3-adic images at the vertices    & Example \\ \hline

$S$ & \tiny{$(\ZZ/2\ZZ \times \ZZ/2\ZZ,\ZZ/2\ZZ \times \ZZ/2\ZZ,\ZZ/2\ZZ,\ZZ/2\ZZ,\ZZ/2\ZZ,\ZZ/2\ZZ,\ZZ/2\ZZ,\ZZ/2\ZZ)$}  & \tiny{(3.4.0.1,3.4.0.1,3.4.0.1,3.4.0.1,3.4.0.1,3.4.0.1,3.4.0.1,3.4.0.1)} &   \texttt{240.b}     \\ \hline
        
   &\tiny{$(\ZZ/2\ZZ \times \ZZ/2\ZZ,\ZZ/2\ZZ \times \ZZ/2\ZZ,\ZZ/4\ZZ,\ZZ/4\ZZ,\ZZ/2\ZZ,\ZZ/2\ZZ,\ZZ/2\ZZ,\ZZ/2\ZZ)$}  & \tiny{(3.4.0.1,3.4.0.1,3.4.0.1,3.4.0.1,3.4.0.1,3.4.0.1,3.4.0.1,3.4.0.1)} &   \texttt{150.b}     \\ \hline
    &\tiny{$(\ZZ/2\ZZ \times \ZZ/6\ZZ,\ZZ/2\ZZ \times \ZZ/2\ZZ,\ZZ/6\ZZ,\ZZ/2\ZZ,\ZZ/6\ZZ,\ZZ/2\ZZ,\ZZ/6\ZZ,\ZZ/2\ZZ)$}  & \tiny{(3.8.0.1,3.8.0.2,3.8.0.1,3.8.0.2,3.8.0.1,3.8.0.2,3.8.0.1,3.8.0.2)} &   \texttt{30.a}     \\ \hline
     &\tiny{$(\ZZ/2\ZZ \times \ZZ/6\ZZ,\ZZ/2\ZZ \times \ZZ/2\ZZ,\ZZ/12\ZZ,\ZZ/4\ZZ,\ZZ/6\ZZ,\ZZ/2\ZZ,\ZZ/6\ZZ,\ZZ/2\ZZ)$}  & \tiny{(3.8.0.1,3.8.0.2,3.8.0.1,3.8.0.2,3.8.0.1,3.8.0.2,3.8.0.1,3.8.0.2)} &   \texttt{90.c}     \\ \hline
\end{tabular}
\end{table}

We give a brief outline of the structure of this article. In Section \ref{sec:background material}, we discuss some background material related to modular curves. In Section \ref{sec:isogenies}, we discuss results on isogenies of elliptic curves that will be used in later sections. In Section \ref{sec:Analysisofrationalpoints}, we find all rational points on curves of our interest. In Section \ref{sec:results}, we prove Theorem \ref{thm:mainthm}. All the code related to this paper can be found in its github repository \cite{RakviGitHub} which is available at \url{https://github.com/Rakvi6893/Variation-of-three-adic-images-with-isogeny}.
\section{Modular Curves}\label{sec:background material}

In this section, we discuss some background material about modular curves that will be used in later sections.

\subsection{Modular Curves} Let $G$ be an open subgroup of $\GL_2(\Zhat)$ that contains $-I$ and has full determinant, i.e., $\det(G)=\Zhat^{\times}$. The level of $G$ is defined as the smallest positive integer $N$ such that $G$ is equal to $\pi_N^{-1}(\pi_N(G))$, where $\pi_N \colon \GL_2(\Zhat) \to \GL_2(\ZZ/N\ZZ)$ is the natural projection map. Let $N$ be the level of $G$. By working modulo $N$, we can think of $G$ as a subgroup of $\GL_2(\ZZ/N\ZZ)$. 

Associated to $G$, there is a smooth, projective and geometrically connected curve $X_G$ defined over $\Q.$ Modular curves can be defined using two approaches. 

One approach is to define $X_{G}$ as the generic fiber of the coarse space for the stack $M_{G}$ which parametrizes elliptic curves with $G$-level structure. Please refer to Chapter $4$, Section 3 of \cite{MR0337993} for details on this construction. Another approach is to construct the field of meromorphic functions $\calF$ on a Riemann surface which is Galois over $\Q(j)$ (here $j$ is the modular $j$-invariant function) and has an action of $\GL_2(\Zhat)$ associated with it. Then we define $X_G$ as smooth curve whose function field is the fixed field corresponding to $G.$ Please see Theorem $6.6$, Chapter $6$ of \cite{MR1291394} for details on this construction.

If $G \subseteq G'$, then there is a morphism $X_G \to X_{G'}.$ In particular, there is a morphism $\pi_G \colon X_G \to \PP^1_{\Q}$ associated to every $X_G.$ We have $\pi_G(X_G(\Q)) \subseteq \Q \union {\infty}.$ The points that lie above $\infty$ are known as cusps. For non cuspidal points the following parametrization property holds. 

\begin{lemma}\label{lemma:modular curves}
    For an elliptic curve $E$ defined over $\Q$ such that the $j$-invariant of $E$ is not equal to $0$ or $1728$ we have $\rho_E(\G_{\Q})$ is conjugate to a subgroup of $G$ if and only if the $j$-invariant of $E$ lies in $\pi_G(X_G(\Q)).$
\end{lemma}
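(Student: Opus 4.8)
The plan is to read the statement off the moduli interpretation of $X_G$, translating ``$\rho_E(\G_{\Q})$ is conjugate into $G$'' into the existence of a $\Q$-rational $G$-level structure on $E$, and then to push this through the coarse-space map $\pi_G$. Throughout, let $N$ be the level of $G$, so that $G$ may be viewed inside $\GL_2(\ZZ/N\ZZ)$.

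First I would set up the dictionary. Fix an isomorphism $\beta\colon(\ZZ/N\ZZ)^2\isomto E[N]$; the group $\GL_2(\ZZ/N\ZZ)$ acts simply transitively on such framings by $\beta\mapsto\beta\circ g$, and a $G$-level structure on $E$ is an orbit $\beta G\coloneqq\{\beta\circ g:g\in G\}$. Since $\G_{\Q}$ acts trivially on $(\ZZ/N\ZZ)^2$ and through $\rho_{E,N}$ on $E[N]$, one has $\beta^{\sigma}=\rho_{E,N}(\sigma)\circ\beta$, so the orbit $\beta G$ is $\G_{\Q}$-stable if and only if $\beta^{-1}\rho_{E,N}(\sigma)\beta\in G$ for all $\sigma$; that is, exactly when $\rho_{E,N}(\G_{\Q})$---equivalently $\rho_E(\G_{\Q})$, as $G$ has level $N$---is conjugate to a subgroup of $G$. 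This already yields the forward direction: choosing $\beta$ that realizes the conjugacy, the pair $(E,\beta G)$ is a $\Q$-point of $M_G$ lying over $j(E)$, whence $j(E)\in\pi_G(X_G(\Q))$.

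For the converse I would take $P\in X_G(\Q)$ with $\pi_G(P)=j(E)$. Since $j(E)\notin\{0,1728,\infty\}$, the point $P$ is non-cuspidal, so by the defining property of the coarse space---a bijection between $\Qbar$-points of $X_G$ and isomorphism classes of $\Qbar$-points of $M_G$---it is represented by a pair $(E_{\Qbar},\beta'G)$ over $\Qbar$; here I use that $E_{\Qbar}$ is the unique elliptic curve over $\Qbar$ with $j$-invariant $j(E)$, so $\beta'$ is a framing of the given module $E[N]$. Rationality of $P$ says this class is fixed by $\G_{\Q}$: for each $\sigma$ there is $\epsilon_\sigma\in\Aut(E)=\{\pm1\}$ with $\rho_{E,N}(\sigma)\,\beta'G=\epsilon_\sigma\,\beta'G$, i.e. $(\beta')^{-1}\epsilon_\sigma\rho_{E,N}(\sigma)\beta'\in G$. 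As $-I\in G$ is central, the sign $\epsilon_\sigma$ may be absorbed, giving $(\beta')^{-1}\rho_{E,N}(\sigma)\beta'\in G$ for all $\sigma$; hence $\rho_E(\G_{\Q})$ is conjugate to a subgroup of $G$, as desired.

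The main obstacle is precisely this last descent: a rational point of the \emph{coarse} space $X_G$ need not come from a pair $(E,\text{level structure})$ defined over $\Q$, the discrepancy being the automorphism ambiguity $\epsilon_\sigma$. The two hypotheses are exactly what defuse it---$j(E)\neq 0,1728$ forces $\Aut(E)=\{\pm1\}$, so $\epsilon_\sigma$ is merely a sign, and $-I\in G$ lets that sign be swallowed by $G$. (The full-determinant condition $\det(G)=\Zhat^{\times}$ enters only upstream, guaranteeing that $X_G$ is geometrically connected and that $M_G$ is defined over $\Q$, so that the coarse-space bijection above is available over the correct base.)
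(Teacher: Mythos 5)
Your argument is correct and is essentially the standard proof of this statement: the forward direction via Galois-stable $G$-orbits of framings of $E[N]$, and the converse via the coarse-space bijection over $\Qbar$ together with $\Aut(E)=\{\pm1\}$ (from $j\neq 0,1728$) and $-I\in G$ to absorb the sign ambiguity. The paper itself does not prove the lemma but cites Proposition 3.3 of Zywina's work on mod $\ell$ images, and your write-up is precisely the argument underlying that reference, including the correct identification of where each hypothesis is used.
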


\begin{proof}
    See Proposition 3.3 of \cite{1508.07660} for a proof.
\end{proof}

When $G$ is Borel subgroup of $\GL_2(\ZZ/N\ZZ)$, i.e., it consists of upper triangular matrices modulo $N$, we denote $X_G$ by $X_0(N).$ Rational non cuspidal points of $X_0(N)$ correspond to elliptic curves defined over $\Q$ up to isomorphism that admit a cyclic isogeny of degree $N.$ 

When \[G=\left\{\begin{psmallmatrix}\pm 1 & a \\0 & b\end{psmallmatrix} \colon a \in \ZZ/N\ZZ, b \in (\ZZ/N\ZZ)^{\times}\right\},\]
we denote $X_G$ by $X_{\pm 1}(N).$ Rational non cuspidal points of $X_{\pm 1}(N)$ correspond to elliptic curves defined over $\Q$ up to isomorphism that have a point $P$ of order $N$ such that $\pm P$ is defined over $\Q.$

For our application we need to analyze if there are non CM elliptic curves whose adelic Galois image is simultaneously contained in two open subgroups of $\GL_2(\Zhat)$ that contain $-I$ and have full determinant. Therefore, we now discuss fiber product of two modular curves that we will study in later sections. If $G_1$ and $G_2$ are two open subgroups of $\GL_2(\Zhat)$ of level $N_1$ and $N_2$ respectively that contain $-I$ and have full determinant, then the fiber product of modular curves $X_{G_1}$ and $X_{G_2}$ is their fiber product as stacks over $X_{\GL_2(\Zhat)}.$ If $N_1$ and $N_2$ are coprime, then the fiber product of $X_{G_1}$ and $X_{G_2}$ is the modular curve $X_G$ where $G$ is $G_1 \intersect G_2.$

For each fiber product that we construct using generators of corresponding factor curves, we compute its canonical model using the function \texttt{FindModelOfXG} available at \href{https://github.com/davidzywina/OpenImage/blob/master/main/ModularCurves.m}{github repository} which is part of the algorithm given by Zywina \cite{2206.14959} to compute $\rho_E(\G_{\Q})$ for a given non CM elliptic curve $E$ defined over $\Q.$ Please see Section $5$ of \cite{2206.14959} for details on computing canonical models of modular curves. We then analyse the rational points on it.

\subsection{Cusps} Fix a modular curve $(X_G,\pi_G).$ Let $N$ be the level of $G.$ Recall that cusps are points in $X_G(\mathbb{C})$ that lie above $\infty$ via $\pi_G.$ In this section we discuss a method to compute the number of cusps using $G.$

Let $U$ be the group of upper triangular matrices in $\SL_2(\ZZ).$ Define $SC:=G\textbackslash \GL_2(\ZZ/N\ZZ)/U_N$, where $U_N$ is the image of $U$ modulo $N.$

\begin{lemma}
    We can identify $SC$ with the set of cusps of $X_G(\mathbb{C})$ and they all are defined over $\Q(\zeta_N).$
\end{lemma}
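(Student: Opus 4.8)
The plan is to identify the complex points $X_G(\mathbb{C})$ with a quotient of the extended upper half-plane, read off the cusps as a single orbit of $\PP^1(\Q)$ under the relevant congruence subgroup, and then translate the resulting $\SL_2$-double-coset description into the $\GL_2$-double-coset $SC$ using the full determinant hypothesis; the field of definition will then follow from the Galois action on the cusps.

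First I would use the analytic (function-field) construction of Theorem $6.6$, Chapter $6$ \cite{MR1291394} to write $X_G(\mathbb{C}) = \overline{\Gamma\backslash \bbH}$, where $\Gamma := G \cap \SL_2(\ZZ)$ and the bar denotes the compactification obtained by adjoining $\Gamma\backslash \PP^1(\Q)$. By definition the cusps are the points lying over $\infty \in \PP^1_{\Q}$ via $\pi_G$, hence are exactly the $\Gamma$-orbits on $\PP^1(\Q)$. Since $\SL_2(\ZZ)$ acts transitively on $\PP^1(\Q)$ with $\infty = [1:0]$, and the stabilizer of $\infty$ is precisely the group $U$ of upper-triangular matrices in $\SL_2(\ZZ)$, I obtain
\[ \{\text{cusps of } X_G\} = \Gamma\backslash \PP^1(\Q) \;\cong\; \Gamma \backslash \SL_2(\ZZ) / U, \]
where the middle identification sends $gU \mapsto g\cdot\infty$.

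Next I would pass to level $N$. Because reduction $\SL_2(\ZZ) \to \SL_2(\ZZ/N\ZZ)$ is surjective and $G$ is the full preimage of $\pi_N(G)$, the above double coset space is identified with $\overline\Gamma \backslash \SL_2(\ZZ/N\ZZ) / U_N$, where $\overline\Gamma = \pi_N(G) \cap \SL_2(\ZZ/N\ZZ)$ is the image of $\Gamma$. It then remains to verify that the natural map
\[ \overline\Gamma \backslash \SL_2(\ZZ/N\ZZ) / U_N \longrightarrow G \backslash \GL_2(\ZZ/N\ZZ) / U_N = SC \]
induced by $\SL_2(\ZZ/N\ZZ) \hookrightarrow \GL_2(\ZZ/N\ZZ)$ is a bijection. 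Surjectivity uses the full determinant hypothesis: given $g \in \GL_2(\ZZ/N\ZZ)$, choose $g_0 \in G$ with $\det g_0 = \det g$, so that $g_0^{-1} g \in \SL_2(\ZZ/N\ZZ)$ represents the same class in $SC$. For injectivity, if $s, s' \in \SL_2(\ZZ/N\ZZ)$ satisfy $s' = g s u$ with $g \in G$ and $u \in U_N$, then taking determinants and using $U_N \subseteq \SL_2$ forces $\det g = 1$, so $g \in \overline\Gamma$ and the two classes already coincide on the $\SL_2$ side. This gives the desired identification of $SC$ with the set of cusps.

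Finally, for the field of definition I would use the covering $X(N) \to X_G$ defined over $\Q$ together with the classical fact that every cusp of the full modular curve $X(N)$ is rational over $\Q(\zeta_N)$; equivalently, the action of $\Gal(\Qbar/\Q)$ on cusps factors through $\Gal(\Q(\zeta_N)/\Q) \cong (\ZZ/N\ZZ)^{\times}$ and is given on $SC$ by right multiplication $G g U_N \mapsto G g \begin{psmallmatrix} 1 & 0 \\ 0 & d \end{psmallmatrix} U_N$. Since each cusp of $X_G$ is the image under a $\Q$-morphism of a cusp of $X(N)$, it is defined over $\Q(\zeta_N)$ as well. I expect the combinatorial steps to be routine; the main obstacle is this last point, namely matching the coarse-moduli/analytic model of $X_G$ over $\Q$ with the explicit Galois action on $SC$, which is where the description of the cusps of $X(N)$ (via Deligne–Rapoport, or Shimura's reciprocity law) must be invoked.
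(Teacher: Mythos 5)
Your argument is correct and is essentially the same as the paper's, whose ``proof'' is simply a citation to the discussion preceding Lemma 5.2 of Zywina's \emph{Explicit open images} paper: that reference establishes exactly your double-coset identification $\Gamma\backslash \PP^1(\Q)\cong G\backslash\GL_2(\ZZ/N\ZZ)/U_N$ (using full determinant for the $\SL_2$-to-$\GL_2$ step) and deduces the $\Q(\zeta_N)$-rationality from the $q$-expansion/Tate-curve description of the cusps of the level-$N$ cover. The combinatorial part of your write-up is complete, and the one step you flag as the main obstacle is precisely the external input the cited lemma also relies on.
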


\begin{proof}
    See discussion preceding Lemma 5.2 of \cite{2206.14959} and Lemma 5.2 of \cite{2206.14959} for a proof.
\end{proof}

\begin{lemma}\label{lemma:cuspaction}
    There is a Galois action of $~\Gal(\Q(\zeta_N)/\Q)$ on $SC$ defined as follows. For $\sigma \in \Gal(\Q(\zeta_N)/\Q)$ that sends $\zeta_N$ to $\zeta_N^a$ and $P=GAU_N \in X_G(\mathbb{C})$, define $\sigma \cdot P \coloneqq GA\begin{psmallmatrix}
    1 & 0 \\
    0 & a
\end{psmallmatrix}U_N.$
\end{lemma}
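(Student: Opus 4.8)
The claim to prove (Lemma~\ref{lemma:cuspaction}) is that the prescribed formula genuinely defines an action of $\Gal(\Q(\zeta_N)/\Q)$ on the double coset space $SC = G \backslash \GL_2(\ZZ/N\ZZ) / U_N$. The plan is to verify three things in turn: that the rule is well-defined on double cosets, that it respects the group law of the Galois group, and (implicitly) that it is a genuine left action. Throughout I would write $d_a \coloneqq \begin{psmallmatrix} 1 & 0 \\ 0 & a \end{psmallmatrix}$ for the diagonal matrix attached to the automorphism $\sigma_a \colon \zeta_N \mapsto \zeta_N^a$, where $a \in (\ZZ/N\ZZ)^{\times}$, so that $\sigma_a \cdot (GAU_N) = G\,A\,d_a\,U_N$.

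\emph{Well-definedness.} The essential point is that right multiplication by $d_a$ must descend to the quotient by $U_N$ on the right and be insensitive to the coset representative chosen on the left. Independence of the left $G$-representative is immediate, since replacing $A$ by $gA$ with $g \in G$ changes $A d_a$ to $g A d_a$, still in the same left $G$-coset. The content lies in the right $U_N$ invariance: I must check that if $u \in U_N$ then $A u\, d_a$ and $A d_a$ lie in the same double coset, i.e. $d_a^{-1} u\, d_a \in U_N$ (up to a left $G$-factor, but in fact it lands exactly in $U_N$). Writing a general element of $U_N$ as $\begin{psmallmatrix} 1 & t \\ 0 & 1 \end{psmallmatrix}$ (the image of upper-triangular unipotent matrices of $\SL_2(\ZZ)$), a direct computation gives $d_a^{-1} \begin{psmallmatrix} 1 & t \\ 0 & 1 \end{psmallmatrix} d_a = \begin{psmallmatrix} 1 & ta \\ 0 & 1 \end{psmallmatrix}$, which again lies in $U_N$. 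Hence $A u\, d_a\, U_N = A\, d_a\, (d_a^{-1} u\, d_a)\, U_N = A\, d_a\, U_N$, so the map is well-defined on $SC$. Here I am using that $U_N$ is the image of the \emph{upper-triangular unipotent} matrices; if $U$ were the full upper-triangular Borel the conjugate would leave $U_N$ invariant as well, so the argument is robust to that interpretation.

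\emph{Compatibility with composition and the action axioms.} The identity of $\Gal(\Q(\zeta_N)/\Q)$ corresponds to $a = 1$, hence to $d_1 = I$, which acts trivially. For composition, given $\sigma_a, \sigma_b$ with composite $\sigma_a \circ \sigma_b = \sigma_{ab}$ (since $\zeta_N \mapsto \zeta_N^b \mapsto \zeta_N^{ab}$), I would compute $\sigma_a \cdot (\sigma_b \cdot (GAU_N)) = \sigma_a \cdot (G A d_b U_N) = G A d_b d_a U_N$ and compare with $\sigma_{ab} \cdot (GAU_N) = G A d_{ab} U_N$. Since the diagonal matrices $d_a$ commute and $d_b d_a = \begin{psmallmatrix} 1 & 0 \\ 0 & ab \end{psmallmatrix} = d_{ab}$, the two agree, establishing the homomorphism property and completing the proof that we have a left action.

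\emph{Anticipated obstacle.} The calculations are short, so the only genuine subtlety is bookkeeping about which normalization of the Galois correspondence and which placement of the $\zeta_N \mapsto \zeta_N^a$ entry is being used, and confirming that the relevant conjugation preserves $U_N$ rather than merely normalizing some larger group. I expect the step requiring the most care is the well-definedness under the right $U_N$-action, because it is the only place where the specific shape of $d_a$ (entry $a$ in the lower-right, reflecting the cyclotomic character's appearance on the determinant/scaling of torsion) interacts nontrivially with the unipotent matrices; getting the conjugation $d_a^{-1} u\, d_a$ on the correct side and verifying it lands back inside $U_N$ is the crux. Everything else reduces to the commutativity and multiplicativity of diagonal matrices.
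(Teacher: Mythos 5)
Your computation is correct as far as it goes: right multiplication by $d_a=\begin{psmallmatrix}1&0\\0&a\end{psmallmatrix}$ normalizes $U_N$ (elements of $U$ have diagonal entries $\pm 1$, and $d_a^{-1}\begin{psmallmatrix}\pm1&t\\0&\pm1\end{psmallmatrix}d_a=\begin{psmallmatrix}\pm1&ta\\0&\pm1\end{psmallmatrix}$ lies again in $U_N$), so the formula descends to the double coset space, and $d_ad_b=d_{ab}$ gives the action axioms. For comparison, the paper offers no argument of its own here; it simply cites Lemma 5.2 of Zywina's paper on explicit open images.

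There is, however, a genuine gap between what you prove and what the lemma asserts and how it is used. You have shown only that the formula defines \emph{some} action of the abstract group $\Gal(\Q(\zeta_N)/\Q)\cong(\ZZ/N\ZZ)^{\times}$ on the finite set $SC$ --- a purely formal statement that would hold with many other matrices in place of $d_a$. The content of the lemma, in view of the preceding identification of $SC$ with the set of cusps of $X_G$ (all defined over $\Q(\zeta_N)$) and of the subsequent use ``$P$ is a rational cusp if and only if $\sigma\cdot P=P$ for all $\sigma$,'' is that this formula computes the \emph{arithmetic} Galois action on the cuspidal points under that identification. That cannot be extracted from matrix algebra alone: it requires arithmetic input about the cusps of modular curves, for instance that the expansions of modular functions at the cusps have coefficients in $\Q(\zeta_N)$ and that $\sigma_a$ acts on $\Q(\zeta_N)((q^{1/N}))$ compatibly with right multiplication by $\begin{psmallmatrix}1&0\\0&a\end{psmallmatrix}$ (equivalently, the description of the Galois action on the cusps of $X(N)$ via the Tate curve or the $q$-expansion principle). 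That is precisely the content of the cited Lemma 5.2 of Zywina, and it is the step your proposal leaves unaddressed.
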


\begin{proof}
    See Lemma 5.2 of \cite{2206.14959} for a proof.
\end{proof}

To check if a cusp $P$ in $SC$ is a rational cusp, i.e., $P \in X_G(\Q)$ we need to check if $\sigma \cdot P=P$ for all $\sigma \in \Gal(\Q(\zeta_N)/\Q)$, where $\cdot$ is as defined in Lemma \ref{lemma:cuspaction}. The script \texttt{Cusps} that is available in this paper's github repository of \cite{RakviGitHub} computes the number of rational cusps in $X_G(\mathbb{C}).$




\section{Isogenies}\label{sec:isogenies}
In this section, we state some results about isogenies that will be used throughout this article.

\begin{definition}
    Let $E/\Q$ and $E'/\Q$ be elliptic curves. An isogeny $\phi \colon E \to E'$ is a surjective morphism that induces a group homomorphism $\phi \colon E(\Qbar) \to E'(\Qbar).$ In addition, if the kernel of $\phi$ is cyclic, we say that $\phi$ is cylic isogeny.
\end{definition}

\begin{definition}
    We say that two elliptic curves $E_1$ and $E_2$ are isogenous to each other if there exists an isogeny between them. The degree of the isogeny $\phi$ is defined as the order of its kernel.
\end{definition}

It can be shown that being isogenous is an equivalence relation. Fix a prime $p$ in the set $\{2,3,5,7,11,13,17,19,37,43,67,163\}.$ For rest of this section, let $E_1$ and $E_2$ be two elliptic curves defined over $\Q$ with a degree $p$ rational isogeny $\phi \colon E_1 \to E_2$ between them.

\begin{lemma}\label{lemma:isomorphismofisogenis}
    Let $\ell$ be a prime not equal to $p.$ Then, $\rho_{E_1,\ell^{\infty}}(\G_{\Q})$ is conjugate in $\GL_2(\ZZ_{\ell})$ to $\rho_{E_2,\ell^{\infty}}(\G_{\Q}).$ 
\end{lemma}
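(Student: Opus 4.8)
The plan is to show that the isogeny $\phi$ induces a Galois-equivariant isomorphism between the $\ell$-adic Tate modules of $E_1$ and $E_2$, and then to translate this isomorphism directly into a conjugacy of the images in $\GL_2(\ZZ_\ell)$. Throughout I write $T_\ell(E_i) = \varprojlim_n E_i[\ell^n]$ for the Tate module, a free $\ZZ_\ell$-module of rank $2$ carrying the $G_{\Q}$-action that defines $\rho_{E_i,\ell^{\infty}}$.

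First I would recall that $\phi$ induces a $\ZZ_\ell$-linear map $T_\ell(\phi) \colon T_\ell(E_1) \to T_\ell(E_2)$, compatible with the maps $\phi \colon E_1[\ell^n] \to E_2[\ell^n]$ on $\ell$-power torsion. Because $\phi$ is defined over $\Q$, its action commutes with that of $G_{\Q}$; that is, $T_\ell(\phi)$ is $G_{\Q}$-equivariant. The next and central point is that $T_\ell(\phi)$ is in fact an \emph{isomorphism} of $\ZZ_\ell$-modules, and this is exactly where the hypothesis $\ell \neq p$ is used. Let $\hat\phi \colon E_2 \to E_1$ be the dual isogeny, so that $\hat\phi \circ \phi = [\deg\phi] = [p]$ on $E_1$ and $\phi \circ \hat\phi = [p]$ on $E_2$. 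Applying $T_\ell$ gives $T_\ell(\hat\phi) \circ T_\ell(\phi) = [p]$ on $T_\ell(E_1)$ and $T_\ell(\phi) \circ T_\ell(\hat\phi) = [p]$ on $T_\ell(E_2)$. Since $p$ is a prime different from $\ell$, we have $p \in \ZZ_\ell^{\times}$, so multiplication by $p$ is invertible on each rank-two free $\ZZ_\ell$-module $T_\ell(E_i)$. It follows that $T_\ell(\phi)$ is invertible, with inverse $p^{-1} T_\ell(\hat\phi)$; in particular it is a $\ZZ_\ell$-module isomorphism.

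Finally I would choose the $\ZZ_\ell$-bases of $T_\ell(E_1)$ and $T_\ell(E_2)$ that give rise to $\rho_{E_1,\ell^{\infty}}$ and $\rho_{E_2,\ell^{\infty}}$, and let $M \in \GL_2(\ZZ_\ell)$ be the matrix of $T_\ell(\phi)$ with respect to these bases (it lies in $\GL_2(\ZZ_\ell)$ precisely because $T_\ell(\phi)$ is an isomorphism). The $G_{\Q}$-equivariance of $T_\ell(\phi)$ then becomes the matrix identity $\rho_{E_2,\ell^{\infty}}(g) = M\,\rho_{E_1,\ell^{\infty}}(g)\,M^{-1}$ for every $g \in G_{\Q}$, whence $\rho_{E_2,\ell^{\infty}}(G_{\Q}) = M\,\rho_{E_1,\ell^{\infty}}(G_{\Q})\,M^{-1}$, which is the asserted conjugacy.

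The one genuinely nontrivial step is verifying that $T_\ell(\phi)$ is a true isomorphism of $\ZZ_\ell$-modules rather than merely an injection with finite cokernel; equivalently, that $\phi$ restricts to a bijection on $\ell$-power torsion. This is what the dual-isogeny computation, combined with the coprimality $\ell \neq p$, supplies, and it is the heart of the lemma. Everything else is the formal passage from a Galois-equivariant change of basis to conjugacy of image groups.
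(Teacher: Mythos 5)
Your proof is correct and follows essentially the same route as the paper: both show that $\phi$ induces a Galois-equivariant isomorphism on $\ell$-power torsion and then read off the conjugacy of images as a change of basis. The only difference is in how the isomorphism is verified --- the paper observes that $\ker(\phi_m)$ is contained in $\ker(\phi)$ (of order $p$) while also lying in the $\ell$-group $E_1[\ell^m]$, hence is trivial, whereas you invert $T_\ell(\phi)$ via the dual isogeny and the fact that $p \in \ZZ_\ell^{\times}$; both are standard and equally valid.
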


\begin{proof}
    For any positive integer $m$ greater than or equal to $1$, $\phi$ induces a homomorphism $\phi_m \colon E_1[\ell^m] \to E_2[\ell^m]$, so the kernel of $\phi_m$ is contained in the kernel of $\phi.$ Since the only prime that can divide the order of the kernel of $\phi_m$ is $\ell$, the kernel of $\phi_m$ must be trivial. We will now show that $E_2[\ell^m]$ and $E_1[\ell^m]$ have the same exact order, which will prove that $\phi_m$ is surjective. Let $\hat{\phi} \colon E_2 \to E_1$ be the unique dual isogeny of degree $p$. For any positive integer $m$ greater than or equal to $1$, $\hat{\phi}$ also induces a homomorphism $\hat{\phi}_m \colon E_2[\ell^m] \to E_1[\ell^m].$ Since the only prime that can divide the order of the kernel of $\hat{\phi}_m$ is $\ell$, kernel of $\hat{\phi}_m$ must also be trivial. It follows easily that $\phi_m$ is an isomorphism. Since $\phi$ is a rational isogeny, $E_1[\ell^m]$ and $E_2[\ell^m]$ are isomorphic as $\G_{\Q}$ modules. Therefore, $E_1[\ell^{\infty}]$ and $E_2[\ell^{\infty}]$ are also isomorphic as $\G_{\Q}$ modules. Let $\{P,Q\}$ be a basis for $E_1[\ell^{\infty}]$ chosen in a compatible way, i.e., $\rho_{E_1,\ell^{m}}=\pi_m \circ \rho_{E_1,\ell^{\infty}}.$ Since $\phi$ induces an isomorphism from $E_1[\ell^{\infty}]$ to $E_2[\ell^{\infty}]$, $\{\phi(P),\phi(Q)\}$ is a basis for $E_2[\ell^{\infty}].$ For any $\sigma \in \G_{\Q}$
    \begin{align*}
        \sigma(P) &= aP+bQ \\
        \sigma(Q) &= cP+dQ,
\end{align*} if and only if \begin{align*}
        \sigma(\phi(P))&=a\phi(P)+b\phi(Q) \\
        \sigma(\phi(Q))&=c\phi(P)+d\phi(Q).
    \end{align*} 
    
    Rewriting these equations as matrices, if $\rho_{E_1,\ell^{\infty}}(\sigma)$ is $\begin{psmallmatrix}
    a & c \\
    b & d
\end{psmallmatrix}$ with respect to basis $\{P,Q\}$, then $\rho_{E_2,\ell^{\infty}}(\sigma)$ is also $\begin{psmallmatrix}
    a & c \\
    b & d
\end{psmallmatrix}$ but with respect to basis $\{\phi(P),\phi(Q)\}$. The Lemma now follows.
\end{proof}

We set some notation before we state more results. From Theorem 1.1.6 of \cite{MR4468989} we know that there exists a smallest number $M_p$ such that $\rho_{E,p^{\infty}}(\G_{\Q})$ is determined modulo $M_p$ for all $E/\Q$ without CM. In particular, for $p=3$ $M_3=27.$ Further it follows from Theorem 1.1.6 of \cite{MR4468989} that each $M_p$ is a power of $p$. 

Fix a basis $\{P,Q\}$ for $E_1[M_p]$ satisfying $\ker(\phi) \subseteq \langle P \rangle.$

\begin{lemma}
    Assume the above setup.
    \begin{enumerate}
        \item Then, $\rho_{E_1,p}(\G_{\Q})$ is conjugate to a subgroup of upper triangular matrices of $\GL_2(\ZZ/p\ZZ).$

        \item The $p$-adic Galois image associated to $E_2$, i.e., $\rho_{E_2,p^{\infty}}(\G_{\Q})$ can be explicitly determined from $\rho_{E_1,p^{\infty}}(\G_{\Q}).$
    \end{enumerate}
     
\end{lemma}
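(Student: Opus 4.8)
The plan is to exploit the compatible basis $\{P,Q\}$ chosen so that $\ker(\phi)\subseteq\langle P\rangle$. For the first bullet, the point is that $\ker(\phi)$ is a $\G_\Q$-stable cyclic subgroup of $E_1$ of order $p$, since $\phi$ is defined over $\Q$. First I would restrict attention modulo $p$: the image $\ker(\phi)[p]$ is a one-dimensional $\F_p$-subspace of $E_1[p]$ that is stable under the Galois action, because $\sigma$ commutes with $\phi$ and hence preserves $\ker(\phi)$. Writing everything in the basis $\{P,Q\}$ reduced mod $p$, the line spanned by $P$ is Galois-stable, so for every $\sigma\in\G_\Q$ the matrix $\rho_{E_1,p}(\sigma)$ fixes $\langle P\rangle$, i.e.\ has no lower-left entry. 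This is exactly the statement that $\rho_{E_1,p}(\G_\Q)$ lands in the upper-triangular (Borel) subgroup of $\GL_2(\ZZ/p\ZZ)$, and the chosen basis realizes the conjugation making this literal rather than just up to conjugacy.

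For the second bullet, the key input is the dual isogeny. Let $\hat\phi\colon E_2\to E_1$ be the isogeny dual to $\phi$, so that $\hat\phi\circ\phi=[p]$ on $E_1$ and $\phi\circ\hat\phi=[p]$ on $E_2$. The idea is that $\phi$ and $\hat\phi$ give explicit $\G_\Q$-equivariant maps between the Tate modules $T_p(E_1)$ and $T_p(E_2)$ whose composite is multiplication by $p$. Concretely, I would track what $\phi$ does to the basis: since $\ker(\phi)=\langle (p^{k-1})P\rangle$ lives in the $P$-direction (where $p^k=M_p$), the induced map on $p$-adic Tate modules sends the lattice generated by $\{P,Q\}$ to the lattice generated by $\{pP,\,Q\}$ inside $T_p(E_2)$ after suitable identification; equivalently, $\phi$ is represented, in compatible bases, by a fixed integral matrix such as $\begin{psmallmatrix} p & 0\\ 0 & 1\end{psmallmatrix}$ (or its transpose, depending on orientation conventions). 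Because $E_1[\ell^\infty]\cong E_2[\ell^\infty]$ as $\G_\Q$-modules for all $\ell\neq p$ by Lemma~\ref{lemma:isomorphismofisogenis}, the only prime at which the $p$-adic images can differ is $p$ itself, so it suffices to compute the change at $p$. The conjugation relation $\rho_{E_2,p^\infty}(\sigma)=A\,\rho_{E_1,p^\infty}(\sigma)\,A^{-1}$ over $\Q_p$, with $A$ the matrix of $\phi$, then lets one read off $\rho_{E_2,p^\infty}(\G_\Q)$ from $\rho_{E_1,p^\infty}(\G_\Q)$ by an explicit base change, and reducing modulo $M_p=p^k$ makes this fully effective.

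The main obstacle I expect is bookkeeping at the prime $p$ itself: away from $p$ the isogeny is an isomorphism on Tate modules and nothing changes, but at $p$ the map $\phi$ is not invertible over $\ZZ_p$, so the two images are related by conjugation by a non-invertible integral matrix, and one must be careful about how this interacts with the requirement of working only modulo $M_p=p^k$. Concretely, the subtlety is that conjugating a subgroup of $\GL_2(\ZZ_p)$ by $\begin{psmallmatrix} p & 0\\ 0 & 1\end{psmallmatrix}$ does not in general land back in $\GL_2(\ZZ_p)$; one instead obtains an index-related subgroup, and the honest statement is that the pair $(\rho_{E_1,p^\infty}(\G_\Q),\ker\phi)$ together with the matrix of $\phi$ determines $\rho_{E_2,p^\infty}(\G_\Q)$. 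I would make the ``explicitly determined'' clause precise by describing the target group as $\{A g A^{-1}: g\in\rho_{E_1,p^\infty}(\G_\Q)\}\cap\GL_2(\ZZ_p)$ read off modulo $p^k$, and verifying that the upper-triangular normal form from the first bullet guarantees this intersection is computed by a finite, mechanical procedure. The fact that $M_3=27$ suffices for $p=3$ is precisely what makes the level finite and the determination algorithmic.
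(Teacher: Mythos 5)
Your argument follows the paper's proof in both parts: the Galois-stability of $\ker\phi\subseteq\langle P\rangle$ gives the Borel shape mod $p$, and the paper likewise obtains $\rho_{E_2,p^{\infty}}(\G_{\Q})$ by conjugating by the matrix of $\phi$ in the adapted basis, writing $\sigma(\phi(P''))=a\phi(P'')+\tfrac{b}{p}\phi(Q)$ and $\sigma(\phi(Q))=pc\phi(P'')+d\phi(Q)$. The one correction is that no intersection with $\GL_2(\ZZ_p)$ is needed: the first bullet forces $p\mid b$ for every element of the image, so the conjugate of $\rho_{E_1,p^{\infty}}(\G_{\Q})$ by the matrix of $\phi$ already lies in $\GL_2(\ZZ_p)$ and equals $\rho_{E_2,p^{\infty}}(\G_{\Q})$ on the nose (whereas intersecting a non-integral conjugate with $\GL_2(\ZZ_p)$ would in general produce the wrong, smaller group).
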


\begin{proof}
\noindent
\begin{enumerate}
    \item Define $P' \coloneqq (M_p/p)P$ and $Q' \coloneqq (M_p/p)Q.$ 
    Since $\{P,Q\}$ is a basis for $E_1[M_p]$, we know that the order of $P'$ and $Q'$ is equal to $p$ and $\langle P' \rangle \intersect \langle Q' \rangle$ is trivial. 
    Hence, $\{P',Q'\}$ is a $\ZZ/M_p\ZZ$-basis for $E_1[p]$ because $\{P,Q\}$ is a basis for $E_1[M_p].$ Since, $\phi$ is a rational isogeny the kernel of $\phi$ is Galois stable. Since, $\ker(\phi) \subseteq \{P,2P,\ldots,M_pP\}$ for $\sigma \in \G_{\Q}$, we have $\sigma(P')=aP'$ for some $a \in (\ZZ/p\ZZ)^{\times}.$ The Lemma now follows.

    \item Choose $P''$ and $Q''$ such that they satisfy $pP''=P$ and $pQ''=Q.$ Since, $\{P,Q\}$ is a basis for $E_1[M_p]$ and $\ker(\phi) \subseteq \{P,2P,\ldots,M_pP\}$, so $\{\phi(P''),\phi(Q)\}$ is a basis for $E_2[M_p].$ This follows because $M_p \phi(P'')=\phi(M_p P'')=\phi((M_p/p) P)$ is identity since the order of the kernel is $p$, hence $M_p$ is also the order of $\phi(P'')$. Since the order of $Q$ is $M_p$, it follows that the order of $\phi(Q)$ is also $M_p$. Further, $\langle \phi(P'') \rangle \intersect \langle \phi(Q) \rangle$ is trivial because $\langle P \rangle \intersect \langle Q \rangle$ is trivial. For any $\sigma \in \G_{\Q}$ assume 
    \begin{align*}
        \sigma(P) &= aP+bQ \\
        \sigma(Q) &= cP+dQ.
    \end{align*} Multiplying first equation with $M_p/p$ we get, $\sigma(P') = aP'+bQ'$, from previous part of this Lemma we know that modulo $p$, $b$ is zero. If we multiply first equation with $1/p$, we get $\sigma(P'') = aP''+(b/p)Q$. If we substitute $P=pP''$ in $\sigma(Q) = cP+dQ$  we obtain $ \sigma(Q) = cpP''+dQ.$ Applying $\phi$ to

    \begin{align*}
        \sigma(P'') &= aP''+(b/p)Q \\
        \sigma(Q) &= cpP''+dQ
    \end{align*} 

    we get \begin{align}
        \label{eqn1:generators}
        \sigma(\phi(P''))&=a\phi(P'')+\frac{b}{p}\phi(Q) \\
        \label{eqn2:generators} \sigma(\phi(Q))&=pc\phi(P'')+d\phi(Q).
        \end{align}
    Rewriting these equations as matrices, if $\rho_{E_1,p^{\infty}}(\sigma)$ is $\begin{psmallmatrix}
    a & c \\
    b & d
\end{psmallmatrix}$ with respect to basis $\{P,Q\}$, then $\rho_{E_2,p^{\infty}}(\sigma)$ is $\begin{psmallmatrix}
    a & pc \\
    b/p & d
\end{psmallmatrix}$ with respect to basis $\{\phi(P''),\phi(Q)\}$.

So, we can explicitly construct generators of $\rho_{E_2,p^{\infty}}(\G_{\Q})$ from generators of $\rho_{E_1,p^{\infty}}(\G_{\Q})$ using \cref{eqn1:generators} and \cref{eqn2:generators}. 
\end{enumerate}

\end{proof}
We state the following Theorem on 3-adic images associated to elliptic curves without CM which follows from Theorem $1.1.6$ of \cite{MR4468989} and Theorem $1.3$ of \cite{2501.07833}. 
\begin{theorem}
\noindent
\begin{enumerate}
    \item \label{thm:no3isogeny} Let $E/\Q$ be an elliptic curve without CM such that $E$ does not admit a degree 3 isogeny. Then, either $\rho_{E,3^{\infty}}(\G_{\Q})$ is $\GL_2(\ZZ_3)$ or the RSZB label of $\rho_{E,3^{\infty}}(\G_{\Q})$ lies in the set \{3.3.0.1, 3.6.0.1, 9.9.0.1, 9.18.0.1, 9.18.0.2, 9.27.0.1, 9.27.0.2\}.

    \item \label{thm:3isogeny} Let $E/\Q$ be an elliptic curve without CM such that $E$ admits a degree 3 isogeny. Then, the RSZB label of $\rho_{E,3^{\infty}}(\G_{\Q})$ lies in the set \{3.4.0.1, 3.8.0.1, 3.8.0.2, 3.12.0.1, 3.24.0.1, 9.12.0.1, \\
    9.12.0.2, 9.24.0.1-9.24.0.4, 9.36.0.1 - 9.36.0.9, 9.72.0.1-9.72.0.16, 27.36.0.1, 27.72.0.1, 27.72.0.2\}.

\end{enumerate}
\end{theorem}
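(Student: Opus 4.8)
The plan is to deduce the statement directly from the complete classification of $3$-adic images of non-CM elliptic curves over $\Q$ given in Theorem $1.1.6$ of \cite{MR4468989}; the only additional work is to sort that finite list of subgroups according to the presence or absence of a rational degree $3$ isogeny. The organizing principle is the standard dictionary between isogenies and Galois-stable subgroups: a non-CM curve $E/\Q$ admits a rational cyclic isogeny of degree $3$ if and only if $E[3]$ contains a $\G_{\Q}$-stable subgroup of order $3$, equivalently a $\G_{\Q}$-fixed line in $E[3] \cong \FF_3^2$, equivalently $\rho_{E,3}(\G_{\Q})$ is conjugate to a subgroup of the Borel (upper-triangular) subgroup $B \subseteq \GL_2(\FF_3)$. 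One direction of this equivalence is exactly the first assertion of the two-bullet lemma above (a degree $3$ isogeny forces the mod $3$ image into the upper-triangular subgroup); the other direction is the elementary observation that a Galois-stable line is the kernel of a rational $3$-isogeny.

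Granting the classification of \cite{MR4468989}, I would proceed as follows. First, for each open subgroup $H \le \GL_2(\ZZ_3)$ appearing in Theorem $1.1.6$, form its reduction $\overline{H} \le \GL_2(\FF_3)$ modulo $3$. Second, decide whether $\overline{H}$ is contained, up to conjugacy, in $B$; concretely one checks whether the permutation action of $\overline{H}$ on $\PP^1(\FF_3)$ has a fixed point, i.e.\ whether $\overline{H}$ has a common eigenvector over $\FF_3$. Since $[\GL_2(\FF_3):B]=4$ and $[\GL_2(\FF_3):\overline{H}]$ divides $[\GL_2(\ZZ_3):H]$, a necessary condition for $\overline{H} \subseteq B$ is that $4$ divide the index $[\GL_2(\ZZ_3):H]$, which is recorded as the second entry of the RSZB label. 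This divisibility already separates the two lists cleanly—every label in the second set has index divisible by $4$ and none in the first set does—but I would confirm Borel containment by the direct eigenvector test rather than rely on the divisibility heuristic alone. Third, I would place the labels whose mod $3$ image lies in $B$ into the degree $3$ isogeny list, and the remaining labels—together with the surjective case $H=\GL_2(\ZZ_3)$, whose mod $3$ image is all of $\GL_2(\FF_3)$ and hence fixes no line—into the no-isogeny list.

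The genuinely hard input is the classification of \cite{MR4468989} itself, which we take as given; relative to it the remaining argument is finite and routine group-theoretic bookkeeping. The one point requiring care is the curve with label $27.243.12.1$: it has positive genus, and since it is not known whether it carries non-cuspidal non-CM rational points, the corresponding image is listed only conditionally in the first part (hence the footnote). I would also record the two standing caveats built into \cref{lemma:modular curves} and the RSZB setup, namely that the $j$-invariants $0$ and $1728$ are excluded from the modular-curve parametrization and must be handled separately, and that one should verify the two resulting lists are disjoint and jointly exhaust the classification, so that every non-CM $E/\Q$ falls into exactly one of the two cases.
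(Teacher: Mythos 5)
Your proposal is correct and matches the paper's treatment: the paper gives no independent proof, simply quoting this statement as part of Theorem 1.1.6 of Rouse--Sutherland--Zureick-Brown and implicitly sorting the finite list of labels by whether the mod $3$ reduction lies in a Borel subgroup, which is exactly your strategy (your index-divisibility observation and eigenvector test are just a more explicit account of that routine sorting). The caveat about $27.243.12.1$ is handled the same way in the paper, via the footnote.
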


For the rest of this article we will use RSZB labels to denote corresponding subgroups.

In the table below, we explicitly list $\rho_{E_2,3^{\infty}}(\G_{\Q})$ given $\rho_{E_1,3^{\infty}}(\G_{\Q}).$ Here we assume that there is a degree $3$ isogeny between $E_1$ and $E_2.$ The code for verifying entries of Table \ref{table:3adicimages} can be found in script \texttt{Code for Table 1} of \cite{RakviGitHub}.
\newpage
\begin{table}[H] 
\caption{Table for reading $\rho_{E_2,3^{\infty}}(\G_{\Q})$ from  $\rho_{E_1,3^{\infty}}(\G_{\Q})$, when there is a degree $3$ isogeny between $E_1$ and $E_2$}\label{table:3adicimages}
\renewcommand{\arraystretch}{1.3}
\vspace{0.5 cm}
\begin{tabular}{|l|l|l|l|}
\hline
$\rho_{E_1,3^{\infty}}(\G_{\Q})$    & $\rho_{E_2,3^{\infty}}(\G_{\Q})$ & $\rho_{E_1,3^{\infty}}(\G_{\Q})$    & $\rho_{E_2,3^{\infty}}(\G_{\Q})$ \\ \hline
3.4.0.1 & 3.4.0.1   & 3.12.0.1 & 9.12.0.1       \\ \hline
9.12.0.1 & 3.12.0.1  & 9.12.0.2 & 9.12.0.2\\ \hline
9.36.0.1 & 9.36.0.4 & 9.36.0.2 & 27.36.0.1  \\ \hline
9.36.0.3 & 9.36.0.6 & 9.36.0.4 & 9.36.0.1 \\ \hline
9.36.0.5 & 9.36.0.1 & 9.36.0.6 & 9.36.0.3 \\ \hline
9.36.0.7 & 9.36.0.9 & 9.36.0.8 & 9.36.0.8\\\hline
9.36.0.9 & 9.36.0.7 & 27.36.0.1 & 9.36.0.2\\ \hline
3.8.0.1 & 3.8.0.2 & 3.8.0.2 & 3.8.0.1\\\hline
3.24.0.1 & 9.24.0.1 & 9.24.0.1 & 3.24.0.1 \\ \hline
9.24.0.2 & 9.24.0.4 & 9.24.0.3 & 3.24.0.1 \\ \hline
9.24.0.4 & 9.24.0.2 & 9.72.0.1 & 9.72.0.5 \\ \hline
9.72.0.2 & 9.72.0.6 & 9.72.0.3 & 27.72.0.1 \\ \hline
9.72.0.4 & 9.72.0.7 & 9.72.0.5 & 9.72.0.1 \\ \hline
9.72.0.6 & 9.72.0.2 & 9.72.0.7 & 9.72.0.4 \\ \hline
9.72.0.8 & 9.72.0.16 & 9.72.0.9 & 9.72.0.15 \\ \hline
9.72.0.10 & 9.72.0.14 & 9.72.0.11 & 9.72.0.2 \\ \hline
9.72.0.12 & 9.72.0.1 & 9.72.0.13 & 9.72.0.4 \\ \hline
9.72.0.14 & 9.72.0.10 & 9.72.0.15 & 9.72.0.9 \\ \hline
9.72.0.16 & 9.72.0.8 & 27.72.0.1 & 9.72.0.3 \\ \hline
27.72.0.2 & 9.72.0.3 & & \\ \hline

\end{tabular}
\end{table}

\section{Analysis of rational points on various modular curves}
\label{sec:Analysisofrationalpoints}

We give a summary of various techniques used to compute the set of rational points denoted by $C(\Q)$ where $C$ is a modular curve defined over $\Q$ of our interest.

\begin{itemize}

    \item If $C$ is an elliptic curve of rank $0$, then we can compute $C(\Q)$ by using Nagell-Lutz Theorem.
    
    \item When genus of $C$ is $2$ and rank of Jacobian of $C$ denoted by $J_C$ is $0$, then $J_C(\Q)=J_C(\Q)_{\tors}$ and we can get $C(\Q)$ by computing preimages of points in $J_C(\Q)_{\tors}$ under the Abel-Jacobi map $C \to J_C$ which is implemented in \texttt{MAGMA} \cite{MR1484478} as \texttt{Chabauty0} command.

    \item For some curves, we compute the canonical model and observe that there are no $\Q_p$ or $\F_p$ solutions.
    
    \item In some cases there is a morphism $\pi \colon C \to C'$ and we can completely determine $C'(\Q)$, so we can find $C(\Q)$ by lifting via $\pi.$

    \item Some curves admit an involution $\iota$ such that the map $P \to P-\iota(P)$ projects onto the torsion subgroup of Jacobian and we were able to "sieve" with respect to this projection. This is based on method described in Section 8.3 of \cite{MR4468989}.
\end{itemize}

For rank of Jacobian, we used the information available at \url{https://beta.lmfdb.org/} \cite{LMFDB} which is based on appendix in \cite{MR4468989}. After determining $C(\Q)$ we use script \texttt{Cusps} to compute the number of rational cusps on $C.$ If there are still any rational points left, then we refer to \url{https://beta.lmfdb.org/} to rule out CM points. If there are still any rational points left, then we looked at non CM elliptic curves across \cite{LMFDB}, computed their adelic image using script \texttt{FindOpenImage.m} of \cite{2206.14959} to analyze which ones give rise to non CM non cuspidal rational points on $C.$

\subsection{Elliptic curves that do not admit degree 3 isogeny}

From the classification of isogeny-torsion graphs of elliptic curves defined over $\Q$ \cite{chiloyan2021classification}, we know that if $E/\Q$ does not admit a degree 3 isogeny, then $E(\Q)_{\tors}$ is either trivial or is isomorphic to one of the groups in set \[\{\ZZ/2\ZZ, \ZZ/4\ZZ, \ZZ/5\ZZ, \ZZ/7\ZZ, \ZZ/8\ZZ, \ZZ/10\ZZ, \ZZ/2\ZZ \times \ZZ/2\ZZ, \ZZ/2\ZZ \times \ZZ/4\ZZ, \ZZ/2\ZZ \times \ZZ/8\ZZ\}.\] In this section, we rule out these possibilities when we have constraints on 3-adic image.

\begin{lemma}\label{lemma:3.3.0.1}
\noindent
Let $E/\Q$ be without CM. If $\rho_{E,3^{\infty}}(\G_{\Q})$ is conjugate to a subgroup of 3.3.0.1, then $E(\Q)_{\tors}$ cannot contain an order 4 subgroup, cannot contain a point of order 5 and cannot admit a degree 7 isogeny, therefore cannot have a rational point of order 7.  
\end{lemma}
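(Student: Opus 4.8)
The plan is to encode each forbidden condition as a rational point on a modular curve attached to a prime different from $3$, and then to intersect it with the hypothesis on the $3$-adic image by passing to a fiber product. Throughout I use that $E$ without CM forces $j(E) \neq 0, 1728$, so that \cref{lemma:modular curves} applies verbatim.

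First I would translate the three conditions into auxiliary modular curves. A rational point of order $5$ produces a rational point on $X_{\pm 1}(5)$ lying over $j(E)$; a degree $7$ isogeny produces one on $X_0(7)$; and an order $4$ subgroup of $E(\Q)_{\tors}$ produces one either on $X_{\pm 1}(4)$, when $E$ has a rational point of order $4$, or on $X(2)$, when $E$ has full rational $2$-torsion. Here $X(2)$ is the modular curve attached to the preimage of $\{I\}$ under reduction modulo $2$; this group contains $-I$ and has full determinant since $(\ZZ/2\ZZ)^{\times}$ is trivial. In each case the auxiliary group $G_2$ contains $-I$, has full determinant, and has level a power of $2$, or $5$, or $7$, hence coprime to the level $3$ of 3.3.0.1.

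Next, for each such $G_2$ I would form the fiber product $X_G$ with $G = (\text{3.3.0.1}) \intersect G_2$, as in \cref{sec:background material}; coprimality of the levels guarantees that $G$ is again open, contains $-I$, and has full determinant. If some non-CM $E$ had its $3$-adic image inside 3.3.0.1 and also one of the forbidden properties, then by \cref{lemma:modular curves} the value $j(E)$ would be the image of a rational point of the corresponding $X_G$. It therefore suffices to show that each of these fiber products has no non-cuspidal non-CM rational point. Concretely, I would compute the canonical model with \texttt{FindModelOfXG}, determine $X_G(\Q)$ by the methods of \cref{sec:Analysisofrationalpoints} --- Jacobian rank computations and Chabauty when the genus is $2$, a local obstruction over some $\Q_p$ or $\F_p$, or descent through a quotient curve whose rational points are already known --- and then strip off the cusps using the \texttt{Cusps} script and the finitely many CM points using LMFDB.

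Finally, the last clause is immediate from the degree $7$ case: a rational point of order $7$ generates a rational cyclic subgroup of order $7$ and hence a degree $7$ isogeny, so once degree $7$ isogenies are excluded, rational $7$-torsion is excluded as well. The main obstacle will be the rational-point determination on the fiber products. For the higher-genus curves one cannot read off $X_G(\Q)$ directly, and a Chabauty or sieving argument is required; moreover one must bookkeep the cusps carefully, since by \cref{lemma:cuspaction} they are only defined over $\Q(\zeta_N)$, and verify that the surviving rational points are all either cuspidal or CM, so that no non-CM curve can realize the two conditions at once.
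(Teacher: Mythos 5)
Your proposal follows essentially the same route as the paper: each forbidden torsion/isogeny structure is encoded by a fiber product of $X_{3.3.0.1}$ with an auxiliary modular curve of coprime level ($X(2)$ and a degree-$4$ curve for the order-$4$ subgroup, $X_{\pm 1}(5)$ for $5$-torsion, $X_0(7)$ for the $7$-isogeny), and the rational points are then determined by the toolbox of \cref{sec:Analysisofrationalpoints}; the paper's computations confirm that these curves are all genus $1$ of rank $0$ or genus $2$ with rank-$0$ Jacobian, so Nagell--Lutz and Chabauty0 suffice and the surviving points are cusps or CM. The only cosmetic difference is that you take $X_{\pm 1}(4)$ where the paper uses the larger quotient $X_0(4)$, which is immaterial since the former covers the latter.
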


\begin{proof}
 We consider fiber products of $X_{3.3.0.1}$ with $X(2)$ and $X_0(4)$ respectively. Both have genus $1$, and their Weierstrass model is given by $E \colon y^2=x^3+1$ which has rank $0.$ Using Nagell-Lutz Theorem we compute $E(\Q)$ which is equal to $\{(0,1,0),(0,1,1),(0,-1,1),(-1,0,1),(2,-3,1),(2,3,1)\}$ out of which there are three rational cusps in both cases. In first case, there are three CM points $\{(-1,0,1),(2,-3,1),(2,3,1)\}$ that correspond to \href{https://lmfdb.org/EllipticCurve/Q/32/a/3}{32.a3}. In second case, there are three CM points $\{(-1,0,1),(2,-3,1),(2,3,1)\}$ out of which $(-1,0,1)$ corresponds to \href{https://lmfdb.org/EllipticCurve/Q/32/a/3}{32.a3} and $\{(2,-3,1),(2,3,1)\}$ correspond to \href{https://lmfdb.org/EllipticCurve/Q/32/a/1}{32.a1}.
 
 The fiber product of $X_{3.3.0.1}$ and $X_{\pm1}(5)$ has genus $2$, so it is hyperelliptic with Weierstrass model given by $C \colon y^2+(x^3+1)y=5x^3+31$ and its Jacobian has rank $0.$ Using the command \texttt{Chabauty0} we know that there are only two rational points on $C$ and both are cusps.

The fiber product of $X_{3.3.0.1}$ and $X_{0}(7)$ has genus $2$, so it is hyperelliptic with Weierstrass model given by $C \colon y^2+(x^3+1)y=6x^3-7$ and its Jacobian has rank $0.$ Using the command \texttt{Chabauty0} we know that there are only four rational points on $C$ out of which there are two rational cusps and two rational CM points $\{(1,-1,1),(-3,13,1)\}$ that correspond to \href{https://lmfdb.org/EllipticCurve/Q/49/a/2}{49.a2} and \href{https://lmfdb.org/EllipticCurve/Q/49/a/1}{49.a1} respectively.

   \end{proof}

\begin{Corollary}\label{cor:3.3.0.1} Let $E/\Q$ be without CM.
\noindent
\begin{enumerate}
    \item If $\rho_{E,3^{\infty}}(\G_{\Q})$ is conjugate to a subgroup of 3.6.0.1, then $E(\Q)_{\tors}$ cannot contain a subgroup of order 4 or a point of order 5 and cannot admit an isogeny of degree 7.
    
    \item If $\rho_{E,3^{\infty}}(\G_{\Q})$ is conjugate to a subgroup of 9.9.0.1, then $E(\Q)_{\tors}$ cannot contain a point of order 5 and cannot admit an isogeny of degree 7.

    \item If $\rho_{E,3^{\infty}}(\G_{\Q})$ is conjugate to a subgroup of 9.18.0.1, then $E(\Q)_{\tors}$ cannot contain a point of order 5 and cannot admit an isogeny of degree 7.
\end{enumerate}

\end{Corollary}

\begin{proof}

    Each group in the set \{3.6.0.1, 9.9.0.1, 9.18.0.1\} is a subgroup of 3.3.0.1. Proof now follows from Lemma \ref{lemma:3.3.0.1}.
\end{proof}

\begin{lemma}\label{lemma:9.9.0.1}
   Let $E/\Q$ be without CM. If $\rho_{E,3^{\infty}}(\G_{\Q})$ is conjugate to a subgroup of 9.9.0.1, then $E(\Q)_{\tors}$ cannot contain a point of order 2. 
\end{lemma}

\begin{proof}

The fiber product of $X_{9.9.0.1}$ and $X_0(2)$ has genus $1$ with Weierstrass model given by $E \colon y^2+(x+1)y=x^3-x^2+4x-1$ and has rank $0.$ Using Nagell-Lutz Theorem we explicitly compute $E(\Q)=\{(0,1,0),(1,1,1),(1,-3,1)\}$ out of which two are cusps and $(1,-3,1)$ is a rational CM point that corresponds to \href{https://lmfdb.org/EllipticCurve/Q/256/a/1}{256.a1}. So, the proof follows.
\end{proof}

\begin{Corollary}\label{cor:9.18.0.2}
    Let $E/\Q$ be without CM. If $\rho_{E,3^{\infty}}(\G_{\Q})$ is conjugate to a subgroup of 9.18.0.2, then $E(\Q)_{\tors}$ cannot contain a point of order 2, a point of order 5 and cannot admit an isogeny of degree 7.
    \end{Corollary}

\begin{proof}
    The group 9.18.0.2 is a subgroup of 9.9.0.1. Proof now follows from Lemma \ref{lemma:9.9.0.1} and Corollary \ref{cor:3.3.0.1}.
\end{proof}

\begin{lemma}\label{lemma:9.18.0.1}
  Let $E/\Q$ be without CM. If $\rho_{E,3^{\infty}}(\G_{\Q})$ is conjugate to a subgroup of 9.18.0.1, then $E(\Q)_{\tors}$ cannot contain a point of order 2.
\end{lemma}

\begin{proof}
The fiber product of $X_{9.18.0.1}$ and $X_0(2)$ has genus $1$ with Weierstrass model given by $E \colon y^2+(x+1)y=x^3-x^2-5x+5$ and has rank $0.$ Using Nagell-Lutz Theorem we compute that $E(\Q)=\{(0,1,0), (1,0,1),(1,-2,1)\}$ out of which two are cusps and $(0,1,0)$ is a rational CM point that corresponds to \href{https://lmfdb.org/EllipticCurve/Q/256/a/1}{256.a1}.
\end{proof}

\begin{lemma}\label{lemma:9.27.0.1}
    Let $E/\Q$ be without CM. If $\rho_{E,3^{\infty}}(\G_{\Q})$ is conjugate to a subgroup of 9.27.0.1, then $E(\Q)_{\tors}$ cannot contain a point of order 2, cannot admit an isogeny of degree 5 or 7. 
\end{lemma}

\begin{proof}
The fiber product of $X_{9.27.0.1}$ with $X_0(2)$ has no $\Q_p$ point for $p=3$, so it cannot have rational points. We verify this by computing all points modulo $3$ and then checking that none of those lift. The fiber product of $X_{9.27.0.1}$ with $X_0(5)$ has no points modulo $p=7$ and the fiber product of $X_{9.27.0.1}$ with $X_0(7)$ has no points modulo $p=2$. These computations can be verified using script \texttt{9.27.0.1} of \cite{RakviGitHub}.
\end{proof}

\begin{lemma}\label{lemma:9.27.0.2}
    Let $E/\Q$ be without CM. If $\rho_{E,3^{\infty}}(\G_{\Q})$ is conjugate to a subgroup of 9.27.0.2, then $E(\Q)_{\tors}$ cannot contain a point of order 2, cannot admit an isogeny of degree 5 or 7. 
\end{lemma}

\begin{proof}
  The fiber product of $X_{9.27.0.2}$ with $X_0(5)$ has no $\Q_3$ points. We verify this by computing all points modulo $3$ and then checking that none of those lift. This can be verified using script \texttt{9.27.0.2 x X0(5)} of \cite{RakviGitHub}. 
  
  Since 9.27.0.2 is a subgroup of 3.3.0.1, by Lemma \ref{lemma:3.3.0.1} it follows that there are no non cuspidal non CM rational points on fiber product of $X_H$ and $X_0(7).$ 
  
  We will now analyze the rational points on the fiber product $C$ of $X_{9.27.0.2}$ and $X_0(2)$ which has LMFDB label \href{https://beta.lmfdb.org/ModularCurve/Q/18.81.3.a.1/}{18.81.3.1}. Its Jacobian has rank $1.$ The canonical model for $C$ is given by equation\[         2x^4+2x^3y-xy^3-2x^3z-3x^2yz+2y^3z-3y^2z^2+xz^3+2yz^3=0.\] It is easy to observe an automorphism $i \colon C \to C$ given by 
         \begin{align*}
             [x,y,z] \to [x-z,-y].
         \end{align*} 

The only rational point that is fixed under $i$ is $(1,1,-1).$

The quotient of $C$ by $i$ is an elliptic curve $E$ of rank $1.$ Therefore, there exists an abelian variety of rank $0$, say $V$ such that the Jacobian of $C$, denoted by $J_C$ decomposes into $E \times V$ since rank of $J_C$ is also $1.$ For any point $P \in C(\Q)$, the point $P-i(P)$ lies in $J_C(\Q)_{\tors}.$ Hence, it suffices to compute preimages of rational points in $J_{C_{\tors}}$ under the map $a \colon C(\Q) \to J_C(\Q)_{\tors}$ given by $P \mapsto P-i(P)$ which is injective away from the fixed points of $i.$ We were not able to compute $J_C(\Q)_{\tors}$ but using local computations which we describe below we were able to deduce that $J_C(\Q)_{\tors}$ is a subgroup of $\ZZ/3\ZZ \times \ZZ/9\ZZ$. Let $p$ be an odd prime not equal to $2$ or $3$. From the Lemma in appendix of \cite{MR604840} we know that $J_C(\Q)_{\tors}$ injects into $J_C(\ZZ/p\ZZ).$ Using \texttt{ClassGroup} command of \texttt{MAGMA} for primes $5$, $13$ and $73$ we find that $J_C(\Q)_{\tors}$ must be a subgroup of $\ZZ/3\ZZ \times \ZZ/9\ZZ.$ We checked that for any two points $P,Q$ in reduction of $C$ modulo $7$, $P-i(P)$ is not of order 3. There are two points in $C(\Q)$ that are easy to observe, $P_1=(0,0,1)$ and $P_2=(0,1,0)=i(P_1)$. Define $D \coloneqq P_1-P_2$, it is a divisor of order 9. So now we need to show that for each $a \in \{1,2,3,4,5,6,7,8\}$ there is either a rational point $P \in C(\Q)$ such that $P-i(P)$ is equivalent to $aD$ or there exists a prime $p$ for which there is no point $P$ in $C(\ZZ/p\ZZ)$ such that $P-i(P)$ is equivalent to $aD.$ Working modulo $7$, we verify that the divisor $(1,1,1)-(1,-1,-1)$ is equivalent to $4D$, the divisor $(-1,1,1)-(1,1,1)$ is equivalent to $5D$ and the divisor $(0,1,0)-(0,0,1)$ is equivalent to $8D.$ For all the other values of $a$, there is no point $P$ in $C(\ZZ/7\ZZ)$ such that $P-i(P)$ is equivalent to $aD.$ So, $C(\Q)=\{(1,1,-1),(0,1,0),(0,0,1),(1,1,1),(1,-1,-1)\}.$ This can be verified using script \texttt{9.27.0.2 x X0(2)} of \cite{RakviGitHub}.

There are no rational cusps. All the points are rational CM points, $\{(1,-1,-1),(1,1,-1)\}$ correspond to \href{https://lmfdb.org/EllipticCurve/Q/32/a/3}{32.a3}, $(0,0,1)$ corresponds to \href{https://lmfdb.org/EllipticCurve/Q/49/a/2}{49.a2}, $(1,1,1)$ corresponds to \href{https:/lmfdb.org/EllipticCurve/Q/32/a/1}{32.a1} and $(0,1,0)$ corresponds to \href{https://lmfdb.org/EllipticCurve/Q/49/a/1}{49.a1}.
\end{proof}

\subsection{Elliptic curves that admit a degree 3 isogeny}

From the classification of isogeny-torsion graphs of elliptic curves defined over $\Q$ \cite{chiloyan2021classification}, we know that if $E/\Q$ admits a degree 3 isogeny then $E(\Q)_{\tors}$ is either trivial or is isomorphic to one of the groups in \[\{\ZZ/2\ZZ, \ZZ/3\ZZ, \ZZ/4\ZZ, \ZZ/5\ZZ, \ZZ/6\ZZ, \ZZ/9\ZZ, \ZZ/12\ZZ, \ZZ/2\ZZ \times \ZZ/6\ZZ, \ZZ/2\ZZ \times \ZZ/2\ZZ\}.\] In this section, we rule out these possibilities when we have constraints on 3-adic image.

\begin{proposition}\label{prop:X0(15)}
    Let $E/\Q$ be without CM. If $E$ admits a degree 3 isogeny and a degree 5 isogeny, then $\rho_{E,3^{\infty}}(\G_{\Q})$ lies in the set \{3.4.0.1, 3.8.0.1, 3.8.0.2\}.
\end{proposition}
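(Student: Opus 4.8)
The plan is to reduce the statement to a single genus-one modular curve. Suppose $E/\Q$ is non-CM and admits a rational degree-$3$ isogeny with kernel $C_3$ and a rational degree-$5$ isogeny with kernel $C_5$. Since $\gcd(3,5)=1$, the subgroup $C_3\oplus C_5$ is a Galois-stable cyclic subgroup of order $15$, so $E\to E/(C_3\oplus C_5)$ is a rational cyclic $15$-isogeny; hence $j(E)$ is a non-cuspidal rational point of $X_0(15)$. By \cref{thm:3isogeny} we already know $\rho_{E,3^{\infty}}(\G_{\Q})$ lies in the finite list of labels attached to curves with a degree-$3$ isogeny, so the content of the proposition is that the extra degree-$5$ isogeny — i.e.\ the membership $j(E)\in\pi_{X_0(15)}(X_0(15)(\Q))$ — forces the label into the sub-list $\{3.4.0.1,\,3.8.0.1,\,3.8.0.2\}$.

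First I would determine $X_0(15)(\Q)$. The curve $X_0(15)$ has genus $1$ and, as an elliptic curve, rank $0$, so its rational points are finite and computable by the Nagell–Lutz method. It has four cusps, so there remain finitely many non-cuspidal rational points; using \cref{lemma:modular curves} and the LMFDB one checks that, after discarding any CM points, the associated non-CM $j$-invariants are finite in number. Thus every $E$ satisfying the hypotheses has $j(E)$ equal to one of these finitely many values.

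Next, for one representative curve of each such $j$-invariant I would compute $\rho_{E,3^{\infty}}(\G_{\Q})$ with the image-computation tools of \cite{2206.14959} and verify directly that its RSZB label is one of $3.4.0.1$, $3.8.0.1$, $3.8.0.2$. Since none of these $j$-invariants is $0$ or $1728$, every other elliptic curve with the same $j$-invariant is a quadratic twist, so the last step is to check that the target set is closed under quadratic twisting. The key observations are that all three groups have level $3$ and that $\det\bar\rho_{E,3}$ is the quadratic character cutting out $\Q(\zeta_3)$: a twist by a quadratic character unramified at $3$ keeps the mod-$3$ image inside the Borel and its two relevant index-$2$ subgroups, while a twist by the character of conductor $3$ preserves the level (that character is trivial on $\ker\bar\rho_{E,3}$, so the principal congruence subgroup $\Gamma(3)=\ker(\GL_2(\ZZ_3)\to\GL_2(\F_3))$ stays in the image) and merely interchanges $3.8.0.1$ and $3.8.0.2$. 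Hence the whole twist family stays in the target set.

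Because $X_0(15)$ has rank $0$, the rational-point determination is the easiest possible case, so I expect the only real subtlety to be this twist-closedness rather than any point computation. A more uniform alternative, closer in spirit to the rest of \cref{sec:Analysisofrationalpoints}, is to rule out each bad label $H$ by showing that the fiber product $X_H\times X_0(5)$ has no non-CM non-cuspidal rational point; note that $X_{3.4.0.1}\times X_0(5)=X_0(15)$ and that the bad-label fiber products are covers of it. Using the containment relations among the labels it would suffice to treat the maximal bad labels (the index-$12$ groups $3.12.0.1$, $9.12.0.1$, $9.12.0.2$, together with the few level-$27$ labels not contained in these). On that route the genuine obstacle becomes the higher-genus covers on which \texttt{Chabauty0} does not apply directly, where one would instead use a Mordell–Weil sieve or the quotient-by-involution sieve of \cref{lemma:9.27.0.2}.
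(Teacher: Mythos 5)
Your proposal matches the paper's proof: the paper likewise reduces to the genus-one, rank-zero curve $X_0(15)$, finds four rational cusps and four non-CM non-cuspidal rational points corresponding to the isogeny class 50.a, and concludes that $E$ must be a quadratic twist of one of those curves, whence $\rho_{E,3^{\infty}}(\G_{\Q})$ lies in $\{3.4.0.1,\,3.8.0.1,\,3.8.0.2\}$. The paper leaves the twist-closedness step implicit (it follows since twisting replaces the image $H$ by a subgroup $H'$ with $\pm H'=\pm H=3.4.0.1$, and these three labels are the only such subgroups), whereas you spell that step out.
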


\begin{proof}
    If $E$ admits a degree 3 isogeny and a degree 5 isogeny, then $\rho_E(\G_{\Q})$ modulo $15$ is contained in Borel subgroup of $\GL_2(\ZZ/15\ZZ).$ From Lemma \ref{lemma:modular curves} we know that $j(E) \in \pi(X_0(15)).$ The curve $X_0(15)$ has genus $1$, rank $0$. Its Weierstrass model is $y^2+(x+1)y=x^3+x^2-10x-10$ and it has eight rational points. There are four rational cusps and four non cuspidal non CM rational points that correspond to elliptic curves \href{https://lmfdb.org/EllipticCurve/Q/50/a/1}{50.a1}, \href{https://lmfdb.org/EllipticCurve/Q/50/a/2}{50.a2}, \href{https://lmfdb.org/EllipticCurve/Q/50/a/3}{50.a3} and \href{https://lmfdb.org/EllipticCurve/Q/50/a/4}{50.a4}. Therefore, $E$ is a quadratic twist of one of the above elliptic curves and hence, $\rho_{E,3^{\infty}}(\G_{\Q})$ lies in the set \{3.4.0.1, 3.8.0.1, 3.8.0.2\}.
\end{proof}

\begin{lemma}\label{no5torsion3.8.0.2}
Let $E/\Q$ be without CM. If $\rho_{E,3^{\infty}}(\G_{\Q})$ is conjugate to 3.8.0.2, then $E(\Q)_{\tors}$ cannot be isomorphic to $\ZZ/5\ZZ.$
\end{lemma}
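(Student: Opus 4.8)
The plan is to avoid computing rational points on any fiber product and instead to combine the degree $3$ isogeny forced by the hypothesis with the transfer of images at primes away from $3$, reducing everything to Mazur's torsion theorem. Suppose, for contradiction, that $E/\Q$ is non-CM with $\rho_{E,3^{\infty}}(\G_{\Q})$ conjugate to 3.8.0.2 and with $E(\Q)_{\tors}\isom\ZZ/5\ZZ$; in particular $E$ has a rational point of order $5$. Since 3.8.0.2 does not appear in the list of \cref{thm:no3isogeny}, the curve $E$ must admit a rational degree $3$ isogeny $\phi\colon E\to E''$, and by Table \ref{table:3adicimages} the isogenous curve satisfies $\rho_{E'',3^{\infty}}(\G_{\Q})={}$3.8.0.1. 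Because $5\neq 3$, \cref{lemma:isomorphismofisogenis} shows that $\rho_{E,5^{\infty}}(\G_{\Q})$ and $\rho_{E'',5^{\infty}}(\G_{\Q})$ are conjugate in $\GL_2(\ZZ_5)$; as ``having a rational point of order $5$'' is equivalent to the mod $5$ image fixing a nonzero vector, a property invariant under conjugation, the curve $E''$ also has a rational point of order $5$.

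The key step is then purely group-theoretic. Both 3.8.0.1 and 3.8.0.2 have level $3$ and index $8$, so their mod $3$ images are order-$6$ subgroups of $\GL_2(\ZZ/3\ZZ)$ with surjective determinant; since each admits a degree $3$ isogeny the image lies in a Borel subgroup, and one checks that up to conjugacy there are exactly two such subgroups: the pointwise stabilizer of a nonzero vector (the $X_1(3)$ group, which forces a rational point of order $3$) and the stabilizer of a line on which it acts through the determinant character (which forces only a degree $3$ isogeny, with no rational $3$-torsion point). These two classes are interchanged by the degree $3$ isogeny, exactly as recorded by the pairing 3.8.0.1 $\leftrightarrow$ 3.8.0.2 in Table \ref{table:3adicimages}. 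Consequently one of the two curves $E$, $E''$ has mod $3$ image equal to the vector stabilizer and therefore carries a rational point of order $3$.

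Combining the two torsion conditions finishes the argument: whichever of $E$, $E''$ has mod $3$ image the vector stabilizer then has both a rational point of order $3$ and a rational point of order $5$, so its rational torsion subgroup contains $\ZZ/3\ZZ\times\ZZ/5\ZZ\isom\ZZ/15\ZZ$. This contradicts Mazur's theorem classifying torsion subgroups of elliptic curves over $\Q$, which excludes $\ZZ/15\ZZ$. Hence no such $E$ exists, and in fact no curve with $3$-adic image 3.8.0.2 has a rational point of order $5$ at all.

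The main obstacle is the identification in the second paragraph: one must verify that one of the two groups 3.8.0.1, 3.8.0.2 genuinely is the full stabilizer $X_1(3)$ of a rational $3$-torsion point (rather than both being the weaker degree-$3$-isogeny group), and that the degree $3$ isogeny really interchanges the two, so that the rational $5$-torsion transported by \cref{lemma:isomorphismofisogenis} can be placed on the curve carrying the rational $3$-torsion. An alternative, more computational route that sidesteps this identification is to invoke \cref{prop:X0(15)}: the hypotheses force $E$ to be a quadratic twist of one of 50.a1--50.a4, after which one tracks how a quadratic twist simultaneously affects the order of the $5$-isogeny character and the $3$-adic label, checking directly that no twist realizes both the image 3.8.0.2 and a rational point of order $5$.
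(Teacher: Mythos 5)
Your proof is correct, but it takes a genuinely different route from the paper's. The paper argues via quadratic twists: writing $H_0=\langle 3.8.0.2,-I\rangle$, it invokes Lemma 5.1 of \cite{MR3500996} to produce a quadratic twist $E_d$ whose $3$-adic image is 3.8.0.1 (hence $E_d$ has a rational point of order $3$), and then contradicts Najman's result \cite{MR3383431} that every nontrivial quadratic twist of a curve with a rational $5$-torsion point has trivial torsion. You instead pass to the $3$-isogenous curve $E''$, use Table \ref{table:3adicimages} to see that its image is 3.8.0.1, transport the rational $5$-torsion across the degree $3$ isogeny via \cref{lemma:isomorphismofisogenis} (legitimate, since the proof of that lemma gives $E[5]\isom E''[5]$ as Galois modules), and conclude that $E''$ would have torsion containing $\ZZ/15\ZZ$, contradicting Mazur. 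Both arguments hinge on the same group-theoretic fact, stated explicitly in the paper's proof and recorded in \cref{lemma:torsion3}, that image 3.8.0.1 forces a rational point of order $3$; your second paragraph verifying this by classifying the order-$6$ subgroups of the Borel with surjective determinant is sound, and could equally be replaced by a citation of \cref{lemma:torsion3} (which does not depend on this lemma, so there is no circularity). Your approach buys independence from the twisting machinery and from Najman's theorem, at the cost of relying on Table \ref{table:3adicimages} and Mazur; the paper's approach stays entirely within the single $\Qbar$-isomorphism class. One small looseness: the justification that $E$ admits a degree $3$ isogeny is better phrased as ``3.8.0.2 reduces mod $3$ into a Borel subgroup'' (it appears in the list of \cref{thm:3isogeny}) rather than by its absence from the list in \cref{thm:no3isogeny}, but this does not affect the argument.
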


\begin{proof}
    
    Let $E$ be such an elliptic curve. Define $H \coloneqq 3.8.0.2$ and $H_0 \colon=\langle H,-I \rangle$. Then, $E$ corresponds to a non cuspidal rational point on $X_{H_0}$. From Lemma $5.1$ of \cite{MR3500996} we know that there exists a quadratic twist $E_d$ such that $\rho_{E_d,3^{\infty}}(\G_{\Q})$ is conjugate to $H'$, where $H'$ is 3.8.0.1. Any elliptic curve $E$ over $\Q$ whose 3-adic image is conjugate to $H'$ must have a rational point of order 3, which is a contradiction because from Proposition 1(b) of \cite{MR3383431} we know that $E_d$ must have trivial torsion.
\end{proof}

\begin{lemma}\label{lemma:3.12.0.1}
    Let $E/\Q$ be without CM. If $\rho_{E,3^{\infty}}(\G_{\Q})$ is conjugate to a subgroup of 3.12.0.1, then $E(\Q)_{\tors}$ cannot contain a subgroup of order 4.  
\end{lemma}

\begin{proof}
    The group 3.12.0.1 is a subgroup of 3.3.0.1. Proof now follows from Lemma \ref{lemma:3.3.0.1}.
\end{proof}

\begin{lemma}\label{lemma:9.12.0.1order4}
Let $E/\Q$ be without CM. If $\rho_{E,3^{\infty}}(\G_{\Q})$ is conjugate to a subgroup of 9.12.0.1, then $E(\Q)_{\tors}$ cannot contain a subgroup of order 4.  
\end{lemma}

\begin{proof}
Let $E/\Q$ be without CM such that $\rho_{E,3^{\infty}}(\G_{\Q})$ is conjugate to 9.12.0.1 and $E(\Q)_{\tors}$ is of order 4. If $E$ has a point of order $4$, then $E$ gives rise to a non cuspidal rational point on $X_0(36)$ which is not possible because $X_0(36)$ is an elliptic curve with Weierstrass equation $y^2=x^3+1$ that has six rational points all of which are cusps. Thus, $E(\Q)_{\tors}$ is isomorphic to $\ZZ/2\ZZ \times \ZZ/2\ZZ$. However, this is not possible because from classification of isogeny-torsion graphs for elliptic curves over $\Q$ \cite{chiloyan2021classification} we know that if $E(\Q)_{\tors}$ is isomorphic to $\ZZ/2\ZZ \times \ZZ/2\ZZ$ then it cannot admit a degree 9 isogeny.

\end{proof}

\begin{lemma}\label{lemma:9.12.0.2tors2}
Let $E/\Q$ be without CM. If $\rho_{E,3^{\infty}}(\G_{\Q})$ is conjugate to a subgroup of 9.12.0.2, then $E(\Q)_{\tors}$ cannot be of order 2.  
\end{lemma}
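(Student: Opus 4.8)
The plan is to follow the template of Lemmas \ref{lemma:9.9.0.1} and \ref{lemma:9.18.0.1}. If $E/\Q$ has $E(\Q)_{\tors}$ of order $2$, then $E$ has a rational point of order $2$, equivalently a rational cyclic subgroup of order $2$, so $j(E)$ lies in the image of $X_0(2)(\Q)$ under the $j$-map. Together with the hypothesis that $\rho_{E,3^{\infty}}(\G_{\Q})$ is conjugate to a subgroup of $9.12.0.2$, Lemma \ref{lemma:modular curves} forces $j(E)$ to be the image of a rational non-cuspidal point on the fiber product $C$ of $X_{9.12.0.2}$ and $X_0(2)$; since $\gcd(9,2)=1$, this fiber product is a modular curve in the sense defined earlier. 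It therefore suffices to show that $C$ has no non-cuspidal non-CM rational points.

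First I would use \texttt{FindModelOfXG} to compute the genus of $C$ together with an explicit model. By analogy with the companion curves $X_{9.9.0.1} \times X_0(2)$ and $X_{9.18.0.1} \times X_0(2)$ appearing in Lemmas \ref{lemma:9.9.0.1} and \ref{lemma:9.18.0.1}, both of genus $1$, I expect $C$ to have small genus. If $C$ has genus $1$, I would write down its Weierstrass model, confirm that its Mordell--Weil rank is $0$, and apply the Nagell--Lutz theorem to list $C(\Q)$ completely. If instead the genus is $2$, I would verify that $\rank J_C = 0$ and use the \texttt{Chabauty0} command in MAGMA to determine $C(\Q)$.

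Having determined $C(\Q)$, I would run the \texttt{Cusps} script to separate out the rational cusps and then cross-reference the remaining rational points against the LMFDB to identify the CM points. The lemma follows once every non-cuspidal rational point is seen to be a CM point: in that case no non-CM $E/\Q$ with $3$-adic image in $9.12.0.2$ can carry a rational point of order $2$, so $E(\Q)_{\tors}$ cannot have order $2$.

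The step I expect to be the main obstacle is the rank computation for $J_C$. If $\rank J_C = 0$ the argument is routine, but a positive rank would rule out both Nagell--Lutz and rank-$0$ Chabauty. In that event I would fall back on the quotient-and-sieve argument used in the proof of Lemma \ref{lemma:9.27.0.2}: locate an involution $\iota$ of $C$ whose quotient absorbs the positive-rank part of $J_C$, bound $J_C(\Q)_{\tors}$ by reducing modulo several auxiliary primes $p \nmid 6$, and then sieve the classes $P - \iota(P)$ modulo a convenient prime to pin down $C(\Q)$ exactly.
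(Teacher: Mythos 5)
Your proposal follows exactly the paper's own argument: the fiber product of $X_{9.12.0.2}$ and $X_0(2)$ does have genus $2$ with a rank-$0$ Jacobian, and the paper applies \texttt{Chabauty0} to find six rational points, of which four are cusps and two are CM points (coming from the curves 27.a3 and 36.a1). The contingency plans you describe (genus $1$, or positive rank with a quotient-and-sieve) turn out to be unnecessary here.
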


\begin{proof}
    Suppose there exists an $E/\Q$ without CM such that $\rho_{E,3^{\infty}}(\G_{\Q})$ is conjugate to a subgroup of 9.12.0.2 and $E(\Q)_{\tors}$ is of order 2. Since $E$ has a torsion point of order $2$, it admits an isogeny of order $2$. So, $E$ must correspond to a non CM rational point on the fiber product of $X_{9.12.0.2}$ and $X_0(2).$ The fiber product $C$ of $X_{9.12.0.2}$ and $X_0(2)$ has genus $2$, hence it is hyperelliptic. It has rank $0.$ Using the command \texttt{Chabauty0}, we can verify that there are six rational points on $C$ out of which there are four rational cusps and two CM points that come from curves \href{https://lmfdb.org/EllipticCurve/Q/27/a/3}{27.a3} and \href{https://lmfdb.org/EllipticCurve/Q/36/a/1}{36.a1}. Hence, such an $E$ cannot exist.
\end{proof}

\begin{Corollary}\label{lemma:9.36.0.7-9.360.9order2}
    
Let $E/\Q$ be without CM. If $\rho_{E,3^{\infty}}(\G_{\Q})$ is conjugate to a subgroup of any group in the set \{9.36.0.7, 9.36.0.8, 9.36.0.9\}, then $E$ cannot have a rational point of order 2.

\end{Corollary}

\begin{proof}
    Each group in the set \{9.36.0.7, 9.36.0.8, 9.36.0.9 \} is a subgroup of 9.12.0.2. Proof now follows from Lemma \ref{lemma:9.12.0.2tors2}.

\end{proof}

\begin{lemma}\label{lemma:9.36.0.1-9.36.0.627.36.0.1order2}
     Let $E/\Q$ be without CM. If $\rho_{E,3^{\infty}}(\G_{\Q})$ is conjugate to a subgroup of any group in \{27.36.0.1, 9.36.0.1, 9.36.0.2, 9.36.0.3, 9.36.0.4, 9.36.0.5, 9.36.0.6\}, then $E$ cannot have a rational point of order 2.

\end{lemma}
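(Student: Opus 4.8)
The plan is to detect a rational point of order $2$ on $E$ as a non-cuspidal rational point on a fiber product with $X_0(2)$, and to cut the seven groups down to a few representatives using the $3$-isogeny relation. First I would record two reductions. A non-CM curve $E$ whose image is conjugate to a subgroup of a group $H$ has a rational $2$-torsion point exactly when the pair $(E,\langle P\rangle)$ with $P$ of order $2$ determines a non-cuspidal rational point on the fiber product $X_H\times_{\PP^1_{\Q}}X_0(2)$ (here $X_0(2)=X_{\pm1}(2)$, and $\gcd(\operatorname{level}(H),2)=1$, so this fiber product is of the type introduced in Section \ref{sec:background material}). Since the covering $X_G\to X_H$ carries non-cusps to non-cusps and preserves $j$-invariants, it suffices to show that for each of the seven groups $H$ the curve $X_H\times_{\PP^1_{\Q}}X_0(2)$ has no non-cuspidal non-CM rational point; this then rules out a rational $2$-torsion point for every subgroup of $H$ at once.

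Second, the existence of a rational point of order $2$ depends only on the mod-$2$ Galois module $E[2]$, hence is invariant under prime-to-$3$ isogeny by Lemma \ref{lemma:isomorphismofisogenis}. Reading off Table \ref{table:3adicimages}, the $3$-isogeny partner relation among our seven groups has connected components $\{9.36.0.1,9.36.0.4,9.36.0.5\}$, $\{9.36.0.2,27.36.0.1\}$ and $\{9.36.0.3,9.36.0.6\}$, so it is enough to treat one representative from each, say $9.36.0.1$, $9.36.0.2$ and $9.36.0.3$. Before computing anything I would check whether any representative is conjugate to a subgroup of $9.9.0.1$ or $9.18.0.1$; in that case Lemma \ref{lemma:9.9.0.1} or Lemma \ref{lemma:9.18.0.1} already forbids a rational point of order $2$, and that entire component is finished with no further work.

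For each remaining representative I would form the fiber product with $X_0(2)$, compute its canonical model, and read off its genus and the rank of its Jacobian. In genus $1$ I would use the Weierstrass model together with Nagell--Lutz; in genus $2$ with rank-$0$ Jacobian I would apply \texttt{Chabauty0}; and when the Jacobian has positive rank I would run the involution-and-sieve argument of Section 8.3 of \cite{MR4468989}, exactly as in the proof of Lemma \ref{lemma:9.27.0.2} (an involution $\iota$, the map $P\mapsto P-\iota(P)$ into $J_C(\Q)_{\tors}$, a bound on $J_C(\Q)_{\tors}$ from reductions at several good primes, and a residue-by-residue check of the classes $aD$). In each case I would then discard rational cusps with the \texttt{Cusps} script and rule out CM points via \cite{LMFDB}, so that no non-cuspidal non-CM rational point survives. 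The main obstacle I anticipate is precisely this point count: the groups $9.36.0.i$ have index $36$, so the associated covers of the $j$-line, and hence the fiber products with $X_0(2)$, have fairly large degree and are likely of genus at least $2$. The positive-rank cases will therefore require the full sieving method, including a determination (via \texttt{ClassGroup} at several good primes) of a bound on the torsion of the Jacobian and an honest verification that every surviving rational point is a cusp or a CM point.
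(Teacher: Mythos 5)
Your proposal matches the paper's argument: the paper likewise reduces a rational $2$-torsion point to a non-cuspidal rational point on the fiber product of $X_H$ with $X_0(2)$ for each of the seven groups $H$, and then determines the rational points. The only difference is that the paper skips your isogeny-component reduction and treats all seven fiber products directly; each turns out to have genus $2$ with rank-$0$ Jacobian, so \texttt{Chabauty0} suffices and none of the heavier sieving machinery you held in reserve is needed.
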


\begin{proof}
Let $H$ be in \{27.36.0.1, 9.36.0.1, 9.36.0.2, 9.36.0.3, 9.36.0.4, 9.36.0.5, 9.36.0.6\}.
If $E/\Q$ is without CM such that $\rho_{E,3^{\infty}}(\G_{\Q})$ is conjugate to a subgroup of $H$ and has a rational point of  order 2, then $E$ gives rise to a non cuspidal rational point on the fiber product of $X_H$ and $X_0(2).$ All of these curves have genus $2$, their Jacobian has rank $0.$ Using command \texttt{Chabauty0} we get all the rational points and verify that either there are no rational points or all the rational points are cusps. These computations can be verified using script \texttt{9.36.0.1 to 9.36.0.6 27.36.0.1 Order 2} of \cite{RakviGitHub}.
\end{proof}

\begin{lemma}\label{lemma:torsion3}
    Let $E/\Q$ be such that it admits a degree 3 isogeny. Then, $E(\Q)_{\tors}$ contains a point of order 3 if and only if $\rho_{E,3^{\infty}}(\G_{\Q})$ is conjugate to one of the groups in \{3.8.0.1, 3.24.0.1, 9.24.0.1, 9.24.0.2, 9.72.0.1, 9.72.0.2, 9.72.0.3, 9.72.0.4, 9.72.0.5, 9.72.0.6, 9.72.0.7, 9.72.0.8, 9.72.0.9, 9.72.0.10, 27.72.0.1\}.
\end{lemma}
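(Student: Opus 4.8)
The plan is to reduce the statement to a finite group-theoretic check. First I would translate the arithmetic condition into a condition on the mod $3$ representation: the curve $E$ has a rational point of order $3$ if and only if there is a nonzero vector in $E[3]$ fixed by every element of $\G_\Q$. Taking such a vector as the first basis vector of $E[3] \simeq (\ZZ/3\ZZ)^2$, this is equivalent to $\rho_{E,3}(\G_\Q)$ being conjugate in $\GL_2(\ZZ/3\ZZ)$ to a subgroup of
\[ C \coloneqq \left\{ \begin{psmallmatrix} 1 & b \\ 0 & d \end{psmallmatrix} : b \in \ZZ/3\ZZ,\ d \in (\ZZ/3\ZZ)^\times \right\}. \]
Since $\rho_{E,3} = \pi_3 \circ \rho_{E,3^\infty}$, whether $E$ has a rational $3$-torsion point depends only on the reduction modulo $3$ of the $3$-adic image, and this property is preserved under conjugation because reduction modulo $3$ commutes with conjugation.

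Next I would use that $E$ admits a degree $3$ isogeny, so that the classification of three adic images recalled above restricts $\rho_{E,3^\infty}(\G_\Q)$ to one of the finitely many groups
\[ \{3.4.0.1,\ 3.8.0.1,\ \ldots,\ 27.72.0.2\}. \]
Because there are only finitely many possibilities and the condition to be tested involves only the image modulo $3$, the lemma reduces to the following finite computation: for each group $H$ in this list, compute $\pi_3(H) \subseteq \GL_2(\ZZ/3\ZZ)$ and decide whether it is conjugate to a subgroup of $C$.

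I would then carry out this check group by group (verifiable by a short computer algebra script). The groups whose mod $3$ reduction is conjugate into $C$ are exactly the fifteen listed in the statement,
\[ \{3.8.0.1,\ 3.24.0.1,\ 9.24.0.1,\ 9.24.0.2,\ 9.72.0.1,\ \ldots,\ 9.72.0.10,\ 27.72.0.1\}; \]
for every other group in the classification the reduction acts through a nontrivial character on the $\G_\Q$-stable line cut out by the degree $3$ isogeny, so $E$ has no rational point of order $3$. This proves both implications at once.

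The point requiring care is that a rational degree $3$ isogeny only forces the mod $3$ image into the Borel subgroup $B = \{\begin{psmallmatrix} * & * \\ 0 & * \end{psmallmatrix}\}$, which stabilizes the line $\langle e_1 \rangle$ but need not fix it pointwise. A rational $3$-torsion point on $E$ corresponds to triviality of the character $\begin{psmallmatrix} a & b \\ 0 & d \end{psmallmatrix} \mapsto a$, whereas a rational $3$-torsion point on the $3$-isogenous curve corresponds to triviality of $\begin{psmallmatrix} a & b \\ 0 & d \end{psmallmatrix} \mapsto d$, and both may fail. Distinguishing $E$ from its isogenous partner, equivalently fixing the orientation of the basis so that $C$ rather than its transpose is the relevant stabilizer, is the only delicate point, and it is precisely the data recorded by the RSZB labels.
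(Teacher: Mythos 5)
Your proposal is correct and follows essentially the same route as the paper: invoke the classification of $3$-adic images for curves with a rational $3$-isogeny, translate the existence of a rational point of order $3$ into the mod-$3$ image being conjugate into the fixed-vector stabilizer $\begin{psmallmatrix} 1 & * \\ 0 & * \end{psmallmatrix}$, and run the resulting finite group-theoretic check over the listed RSZB groups. Your remark distinguishing the stabilizer of a line from its pointwise stabilizer (and its transpose) is exactly the point the paper relies on implicitly, so nothing is missing.
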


\begin{proof}

Using Theorem \ref{thm:3isogeny} we know the possibilities for $\rho_{E,3^{\infty}}(\G_{\Q})$ if $E$ admits a degree 3 isogeny. Then, $E(\Q)_{\tors}$ contains a point of order 3 if and only if $\rho_{E,3}(\G_{\Q})$ is conjugate to the subgroup of $\GL_2(\ZZ/3\ZZ)$ generated by matrices of the form $\big(\begin{smallmatrix}
1 & a \\
0 & b
\end{smallmatrix}\big).$ The groups that fulfill this condition are in the set \{3.8.0.1, 3.24.0.1, 9.24.0.1, 9.24.0.2, 9.72.0.1, 9.72.0.2, 9.72.0.3, 9.72.0.4, 9.72.0.5, 9.72.0.6, 9.72.0.7, 9.72.0.8, 9.72.0.9, 9.72.0.10, 27.72.0.1\}.

\end{proof}

\begin{lemma}\label{lemma:torsion9}
    Let $E/\Q$ be such that it admits a degree 3 isogeny. Then, $E(\Q)_{\tors}$ is isomorphic to $\ZZ/9\ZZ$ if and only if $\rho_{E,3^{\infty}}(\G_{\Q})$ is conjugate to 9.72.0.5.
\end{lemma}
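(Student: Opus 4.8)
The plan is to prove the biconditional in \cref{lemma:torsion9} by analyzing when an elliptic curve admitting a degree $3$ isogeny has full rational $9$-torsion cyclic structure, pinning down which RSZB label this forces. The key input is \cref{lemma:torsion3}, which already restricts us to a list of $15$ possible labels for $\rho_{E,3^{\infty}}(\G_{\Q})$ when $E$ admits a degree $3$ isogeny and has a rational point of order $3$. Since $E(\Q)_{\tors} \isom \ZZ/9\ZZ$ is a refinement of the condition ``$E(\Q)$ contains a point of order $3$,'' the forward direction amounts to sieving this list down to the single label 9.72.0.5. First I would observe that $E(\Q)_{\tors} \isom \ZZ/9\ZZ$ forces $E$ to admit a rational cyclic $9$-isogeny and, more importantly, to have a rational point of exact order $9$, so $\rho_{E,9}(\G_\Q)$ must be conjugate into the subgroup of $\GL_2(\ZZ/9\ZZ)$ fixing a point of order $9$, i.e.\ matrices of the form $\big(\begin{smallmatrix} 1 & * \\ 0 & * \end{smallmatrix}\big)$ modulo $9$.

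The main computational step is to intersect this mod-$9$ congruence condition against each of the $15$ candidate labels from \cref{lemma:torsion3} and check which are simultaneously compatible. Concretely, for each candidate group $H$ in that list I would compute $\pi_9(H)$ (the image modulo $9$, since all relevant levels divide $27$ and the point-of-order-$9$ condition is a mod-$9$ condition) and test whether $\pi_9(H)$ is conjugate into the stabilizer of a point of order $9$. The labels with level dividing $3$ (namely 3.8.0.1, 3.24.0.1) only guarantee a point of order $3$, not order $9$, so these should be eliminated because their mod-$9$ reductions are too large to fix a $9$-torsion point; similarly the labels 9.72.0.6 through 9.72.0.10 and 9.24.0.1, 9.24.0.2, 27.72.0.1 should fail the stabilizer test, leaving exactly 9.72.0.5. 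This is essentially a finite group-theoretic verification that can be carried out with the same \texttt{MAGMA} machinery used elsewhere in the paper, and I would record it via a script in the repository \cite{RakviGitHub}.

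For the converse direction, I would start from the assumption that $\rho_{E,3^{\infty}}(\G_{\Q})$ is conjugate to 9.72.0.5 and show this forces $E(\Q)_{\tors} \isom \ZZ/9\ZZ$. By \cref{lemma:torsion3}, since 9.72.0.5 is on that list, $E(\Q)$ already contains a point of order $3$; since the level is $9$ and the group fixes a point of order $9$ (this being precisely the property that selected it), $E$ has a rational point of exact order $9$, giving $\ZZ/9\ZZ \subseteq E(\Q)_{\tors}$. To rule out anything larger, I would invoke the classification of isogeny torsion graphs \cite{chiloyan2021classification}: among curves admitting a degree $3$ isogeny, the only torsion groups containing $\ZZ/9\ZZ$ are $\ZZ/9\ZZ$ itself, so no point of order $2$ can be present and the group cannot grow to $\ZZ/18\ZZ$ (which would require a $2$-isogeny incompatible with the graph structure). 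Alternatively, the mod-$9$ image 9.72.0.5 has determinant and shape incompatible with a simultaneous rational point of order $2$, which I could verify directly.

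The hard part will be the converse's exclusion of extra torsion: showing $\ZZ/9\ZZ$ cannot be enlarged requires either a clean appeal to the isogeny-torsion-graph classification or an explicit check that the relevant fiber product (of $X_{9.72.0.5}$ with $X_0(2)$) has no non-cuspidal non-CM rational points, analogous to the arguments in \cref{lemma:9.12.0.2tors2} and \cref{lemma:9.36.0.1-9.36.0.627.36.0.1order2}. I expect the cleanest route is the structural one via \cite{chiloyan2021classification}, avoiding a new rational-points computation, but the group-theoretic sieve in the forward direction is where the real content lies and where care is needed to correctly reduce each candidate modulo $9$ and test conjugacy into the $9$-torsion stabilizer.
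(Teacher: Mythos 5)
Your proposal is correct and follows essentially the same route as the paper: reduce to the group-theoretic criterion that a rational point of order $9$ exists if and only if the mod-$9$ image is conjugate into the stabilizer of a point of order $9$ in $\GL_2(\ZZ/9\ZZ)$, then check which of the candidate labels satisfy it, finding only 9.72.0.5. Your additional care in the converse about excluding torsion strictly larger than $\ZZ/9\ZZ$ is left implicit in the paper (it follows at once from Mazur's classification, since no admissible torsion group properly contains $\ZZ/9\ZZ$), so it does not constitute a genuinely different argument.
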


\begin{proof}
We know that $E(\Q)_{\tors}$ contains a point of order 9 if and only if $\rho_{E,9}$ is conjugate to the subgroup of $\GL_2(\ZZ/9\ZZ)$ generated by matrices of the form $\big(\begin{smallmatrix}
1 & a \\
0 & b
\end{smallmatrix}\big).$ We check that the only group among groups in Theorem \ref{thm:3isogeny} that satisfies this condition is 9.72.0.5.
\end{proof}

\section{Classification of 3-adic Galois images of isogeny-torsion graphs}\label{sec:results}

The aim of this section is to give proof of Theorem \ref{thm:mainthm}.

\subsection{Linear graphs} 

We say that an isogeny-torsion graph is of type $L_1$, if there is exactly one vertex in it.

\begin{proposition}\label{prop:L_1}
    Let $E/\Q$ be without CM. If the isogeny-torsion graph associated to $E$ is of type $L_1$, then $\rho_{E,3^{\infty}}(\G_{\Q})$ is in the set \{~$\GL_2(\ZZ_3)$, 3.3.0.1, 3.6.0.1, 9.9.0.1, 9.18.0.1, 9.18.0.2, 9.27.0.1, 9.27.0.2\}.
\end{proposition}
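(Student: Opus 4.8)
The plan is to reduce the statement directly to the first part of Theorem \ref{thm:no3isogeny}. The essential observation is purely combinatorial: an isogeny torsion graph of type $L_1$ has a single vertex and hence no edges, which by the definition of the isogeny graph means that the isogeny class $\mathfrakE$ of $E$ consists of $E$ alone up to isomorphism over $\Q$. Consequently $E$ admits no rational isogeny of prime degree, and in particular it admits no rational isogeny of degree $3$.

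First I would record this implication with a little care. Since $E$ is without CM, any cyclic isogeny of prime degree $\phi \colon E \to E'$ has target $E'$ not isomorphic to $E$ over $\Q$, for otherwise $\phi$ would furnish a nontrivial endomorphism and contradict $\operatorname{End}(E) = \ZZ$. Thus every prime-degree rational isogeny contributes a genuinely distinct vertex together with an edge, so the absence of edges in the $L_1$ graph is equivalent to the nonexistence of any prime-degree rational isogeny; in particular no degree $3$ isogeny exists.

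Having established that $E$ does not admit a degree $3$ isogeny, I would simply invoke Theorem \ref{thm:no3isogeny}: for such $E$ it asserts that either $\rho_{E,3^{\infty}}(\G_{\Q})$ equals $\GL_2(\ZZ_3)$ or its RSZB label lies in the set \{3.3.0.1, 3.6.0.1, 9.9.0.1, 9.18.0.1, 9.18.0.2, 9.27.0.1, 9.27.0.2, 27.243.12.1\}, which is exactly the set appearing in the proposition, the asterisk on 27.243.12.1 recording the same conjectural footnote. This concludes the argument.

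The statement is therefore an immediate corollary and presents no genuine obstacle. The only point needing attention is the translation between the combinatorial hypothesis ``type $L_1$'' and the arithmetic hypothesis of Theorem \ref{thm:no3isogeny}, namely that one should invoke only the weaker consequence that $E$ has no degree $3$ isogeny; since the proposition asserts a containment, i.e.\ an upper bound on the possible images rather than an existence statement, no occurrence of any listed image needs to be exhibited here.
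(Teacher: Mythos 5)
Your argument matches the paper's proof: the paper likewise observes that type $L_1$ implies $E$ admits no degree $3$ isogeny and then cites the first part of Theorem \ref{thm:no3isogeny}. The only difference is that the paper additionally exhibits LMFDB examples realizing each listed image, which, as you correctly note, is not needed for the containment statement as written.
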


\begin{proof}
     If the isogeny-torsion graph associated to $E$ is of type $L_1$, then in particular $E$ does not admit a degree 3 isogeny. So from Theorem \ref{thm:no3isogeny} we know the possibilities for $\rho_{E,3^{\infty}}(\G_{\Q}).$  Each of those possibilities occur, see \href{https://lmfdb.org/EllipticCurve/Q/37/a/}{37.a}, \href{https://lmfdb.org/EllipticCurve/Q/245/a/}{245.a}, \href{https://lmfdb.org/EllipticCurve/Q/1210/d/}{1210.d}, \href{https://lmfdb.org/EllipticCurve/Q/864/e/}{864.e}, \href{https://lmfdb.org/EllipticCurve/Q/2057/c/}{2057.c}, \href{https://lmfdb.org/EllipticCurve/Q/22898/d/}{22898.d}, \href{https://lmfdb.org/EllipticCurve/Q/1944/e/}{1944.e} and \href{https://lmfdb.org/EllipticCurve/Q/118579/b/}{118579.b}.
\end{proof}

We say that an isogeny graph is of type $L_2(p)$ if there are eaxctly two vertices in it connected with an edge of label $p$, where $p \in \{2,3,5,7,11,13,17,37\}.$

\begin{proposition}
    Let $E/\Q$ be without CM. If the isogeny-torsion graph associated to $E$ is of type $L_2(2)$, then either $\rho_{E,3^{\infty}}(\G_{\Q})$ is $\GL_2(\ZZ_3)$ or $\rho_{E,3^{\infty}}(\G_{\Q})$ lies in the set \{3.3.0.1, 3.6.0.1\}.
\end{proposition}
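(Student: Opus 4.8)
The plan is to combine the structural information carried by the graph type $L_2(2)$ with the constraints on three adic images already established in Section \ref{sec:Analysisofrationalpoints}. By the classification of isogeny torsion graphs \cite{chiloyan2021classification}, saying that the graph of $E$ is of type $L_2(2)$ means its isogeny class consists of exactly two curves joined by a single edge of degree $2$; in particular $E$ admits a rational degree $2$ isogeny but admits no isogeny of prime degree different from $2$, and in particular no degree $3$ isogeny.

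First I would record the two consequences I need. A rational cyclic $2$-isogeny $\phi\colon E\to E'$ has kernel a Galois stable subgroup of order $2$, whose unique nontrivial element is therefore a rational point of order $2$; hence $E(\Q)_{\tors}$ contains a point of order $2$. Second, since $E$ admits no degree $3$ isogeny, theorem \ref{thm:no3isogeny} applies and tells us that $\rho_{E,3^{\infty}}(\G_{\Q})$ is either $\GL_2(\ZZ_3)$ or one of the eight groups 3.3.0.1, 3.6.0.1, 9.9.0.1, 9.18.0.1, 9.18.0.2, 9.27.0.1, 9.27.0.2, 27.243.12.1.

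The bulk of the argument is then to eliminate every candidate other than 3.3.0.1 and 3.6.0.1. The five groups of level $9$ are ruled out immediately by the rational two torsion point found above: lemma \ref{lemma:9.9.0.1} excludes 9.9.0.1, lemma \ref{lemma:9.18.0.1} excludes 9.18.0.1, corollary \ref{cor:9.18.0.2} excludes 9.18.0.2, and lemmas \ref{lemma:9.27.0.1} and \ref{lemma:9.27.0.2} exclude 9.27.0.1 and 9.27.0.2, since each of these forces $E(\Q)_{\tors}$ to have no point of order $2$. The remaining candidate 27.243.12.1 cannot be excluded by a direct rational point computation, since its rational points are only conjecturally understood; here I would instead invoke the remark following Proposition \ref{prop:L_1}, which says that if $\rho_{E,3^{\infty}}(\G_{\Q})$ were 27.243.12.1 then the isogeny torsion graph of $E$ would necessarily be of type $L_1$, contradicting our hypothesis that it is $L_2(2)$. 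What survives is exactly $\GL_2(\ZZ_3)$ together with $\{3.3.0.1, 3.6.0.1\}$, and neither of these two groups conflicts with the presence of a rational point of order $2$, since lemma \ref{lemma:3.3.0.1} and corollary \ref{cor:3.3.0.1} only obstruct subgroups of order $4$, points of order $5$, and degree $7$ isogenies. The proposition then follows.

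I expect the only genuinely delicate step to be the handling of 27.243.12.1: every other case reduces to quoting an order $2$ obstruction coming from a finished rational point analysis, whereas this group is the conjectural one, and excluding it requires the indirect graph theoretic argument rather than a direct study of rational points on a fiber product.
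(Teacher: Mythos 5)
Your argument follows essentially the same route as the paper: since an $L_2(2)$ graph forces a rational $2$-isogeny (hence a rational point of order $2$) and forbids a degree $3$ isogeny, one starts from the list in Theorem \ref{thm:no3isogeny} and eliminates 9.9.0.1, 9.18.0.1, 9.18.0.2, 9.27.0.1 and 9.27.0.2 via Lemmas \ref{lemma:9.9.0.1}, \ref{lemma:9.18.0.1}, \ref{lemma:9.27.0.1}, \ref{lemma:9.27.0.2} and Corollary \ref{cor:9.18.0.2}, which is exactly what the paper does (the paper additionally invokes Lemma \ref{lemma:isomorphismofisogenis} to note both vertices share the same image, and exhibits isogeny classes realizing each surviving possibility). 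The one place where you diverge is the treatment of 27.243.12.1, and there your justification is shaky: the remark after Proposition \ref{prop:L_1} is not a proved lemma but an assertion that ``from our results'' the graph of such a curve must be $L_1$, and those ``results'' include the very family of propositions you are proving, so quoting it here courts circularity. The clean way to close this case --- and the way the paper implicitly handles it through the ``conjugate to a subgroup of'' phrasing of its lemmas --- is to observe that 27.243.12.1 is conjugate to a subgroup of one of the five excluded groups (a level-$9$ group of the form 9.27.0.$n$), so that the corresponding order-$2$ lemma already forbids the rational $2$-torsion point; with that containment made explicit your proof is complete and matches the paper's.
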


\begin{proof}
    If the isogeny-torsion graph associated to $E$ is of type $L_2(2)$, then each of the vertices have conjugate 3-adic Galois image from \cref{lemma:isomorphismofisogenis}. So, it suffices to classify possibilities for a single vertex. From \cref{lemma:9.9.0.1}, \cref{lemma:9.18.0.1}, \cref{cor:9.18.0.2}, \cref{lemma:9.27.0.1} and \cref{lemma:9.27.0.2} we know that either $\rho_{E,3^{\infty}}(\G_{\Q})$ is $\GL_2(\ZZ_3)$ or $\rho_{E,3^{\infty}}(\G_{\Q})$ is in the set \[\{3.3.0.1, 3.6.0.1\}.\] All of these possibilities occur, see \href{https://lmfdb.org/EllipticCurve/Q/46/a/}{46.a}, \href{https://lmfdb.org/EllipticCurve/1568/a/}{1568.a} and \href{https://lmfdb.org/EllipticCurve/Q/726/b/}{726.b}.
\end{proof}

\begin{proposition}
    Let $E/\Q$ be without CM. If the isogeny-torsion graph associated to $E$ is of type $L_2(3)$, then one of the following holds.
\begin{enumerate}
    \item If both vertices have trivial torsion then $(\rho_{E_i,3^{\infty}}(\G_{\Q}))_{i=1}^{i=2}$ is given by one of the tuples (3.4.0.1,3.4.0.1), (9.12.0.2,9.12.0.2), (9.36.0.7,9.36.0.9), (9.36.0.8,9.36.0.8).

    \item If only one vertex has trivial torsion, then $(\rho_{E_i,3^{\infty}}(\G_{\Q}))_{i=1}^{i=2}$ is given by one of the tuples (3.8.0.1,3.8.0.2), (9.24.0.2,9.24.0.4), (9.72.0.8,9.72.0.16), (9.72.0.9,9.72.0.15), (9.72.0.10,9.72.0.14).
\end{enumerate}
\end{proposition}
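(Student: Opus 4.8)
The plan is to pin down the pair $(\rho_{E_1,3^\infty}(\G_\Q),\rho_{E_2,3^\infty}(\G_\Q))$ by combining three inputs: that both vertices admit a degree-$3$ isogeny, that the degree-$3$ edge relates their images through \cref{table:3adicimages}, and that the shape $L_2(3)$ rigidly forbids any further isogeny in the class.

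First I would set up the edge. Let $\phi\colon E_1\to E_2$ realize the single degree-$3$ edge. Both curves admit a degree-$3$ isogeny, so \cref{thm:3isogeny} confines each $\rho_{E_i,3^\infty}(\G_\Q)$ to the listed RSZB labels, and \cref{table:3adicimages} shows that the label of $E_2$ is determined by that of $E_1$. Hence every admissible pair is a row of \cref{table:3adicimages}, and the task reduces to selecting the rows compatible with the isogeny class being exactly $\{E_1,E_2\}$.

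Next I would extract the torsion dichotomy. Since $L_2(3)$ has two vertices and one edge of degree $3$, no vertex admits a degree-$2$, $5$, or $7$ isogeny (each would add an edge of that degree) nor a degree-$9$ isogeny (which would add a third vertex); consequently no vertex has a rational point of order $2$, $5$, $7$, or $9$, so by \cite{chiloyan2021classification} each $E_i(\Q)_{\tors}$ is trivial or $\ZZ/3\ZZ$. Analyzing the Galois action on $E_1[3]$ through $\ker\phi$ then shows that if one vertex has a rational $3$-torsion point the other cannot, since otherwise the $3$-torsion of one of the curves would split into two Galois-stable lines, producing a second degree-$3$ isogeny and hence a larger graph. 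This is exactly the split into the two cases of the statement. In the first case \cref{lemma:torsion3} forces both labels out of its order-$3$ list, and in the second exactly one label lies in that list while \cref{lemma:torsion9} excludes 9.72.0.5, whose $\ZZ/9\ZZ$ would force a degree-$9$ isogeny. Reading the compatible rows off \cref{table:3adicimages} should then leave precisely the four tuples of the first case and the five of the second, and I would close by exhibiting a non-CM elliptic curve over $\Q$ from the LMFDB realizing each.

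The hard part will be separating $L_2(3)$ from the larger $3$-primary graphs, since \cref{table:3adicimages} records only one degree-$3$ isogeny per label and the torsion lemmas do not detect a second one. For instance 3.12.0.1 $\leftrightarrow$ 9.12.0.1 is a clean two-cycle in \cref{table:3adicimages} with trivial rational torsion on both sides, yet it must be discarded because 3.12.0.1 is a split-Cartan image and so carries two rational $3$-subgroups, placing the curve in a strictly larger graph. Ruling out such labels---together with 3.24.0.1, 9.24.0.1, 9.24.0.3, and the level-$27$ labels, all of which yield a second degree-$3$ isogeny or a degree-$9$ isogeny---requires inspecting the fixed points of the mod-$3$ image on $\PP^1(\F_3)$ (to confirm a unique rational $3$-subgroup) and checking for a Galois-stable cyclic subgroup of order $9$, rather than appealing to rational torsion alone.
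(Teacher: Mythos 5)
Your proposal is correct and follows essentially the same route as the paper: restrict to the labels of Theorem \ref{thm:3isogeny}, filter by Lemma \ref{lemma:torsion3} and by the fact that no vertex of an $L_2(3)$ graph admits a cyclic $9$-isogeny, then read off the admissible pairs from Table \ref{table:3adicimages} and exhibit LMFDB examples. The extra group-theoretic inspection you flag as the hard part (fixed points on $\PP^1(\F_3)$, Galois-stable order-$9$ subgroups) is not needed in the paper's version, since labels such as 3.12.0.1, 3.24.0.1 and 9.24.0.3 are discarded simply because their partners under Table \ref{table:3adicimages} (9.12.0.1, 9.24.0.1, 3.24.0.1 respectively) fail to lie in the already-filtered candidate set for the other vertex.
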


\begin{proof} We know that both $E_1$ and $E_2$ admit a degree 3 isogeny and do not admit a degree 9 cyclic isogeny.
    \begin{enumerate}
        \item If both the vertices have trivial torsion, then combining Theorem \ref{thm:3isogeny}, Lemma \ref{lemma:torsion3} we get that $\rho_{E_i,3^{\infty}}(\G_{\Q})$ for $i \in \{1,2\}$ lies in \{3.4.0.1, 3.8.0.2, 3.12.0.1, 9.12.0.2, 9.24.0.4, 9.36.0.1, 9.36.0.2, 9.36.0.3, 9.36.0.7, 9.36.0.8, 9.36.0.9, 9.72.0.14, 9.72.0.15, 9.72.0.16\}. From Table \ref{table:3adicimages} the only pair of labels that lie in this set are (3.4.0.1,3.4.0.1), (9.12.0.2,9.12.0.2), (9.36.0.7,9.36.0.9), (9.36.0.8,9.36.0.8). See, \href{https://lmfdb.org/EllipticCurve/176/a/}{176.a}, \href{https://lmfdb.org/EllipticCurve/196/a/}{196.a}, \href{https://lmfdb.org/EllipticCurve/1734/k/}{1734.k} and \href{https://lmfdb.org/EllipticCurve/17100/r/}{17100.r} respectively.

        \item Assume that $E_1(\Q)_{\tors}$ is of order 3, then from Lemma \ref{lemma:torsion3} and the fact that $E_1$ does not admit cyclic isogeny of degree 9, $\rho_{E_1,3^{\infty}}(\G_{\Q})$ lies in \{3.8.0.1, 3.24.0.1, 9.24.0.2, 9.72.0.1, 9.72.0.2, 9.72.0.3, 9.72.0.4, 9.72.0.8, 9.72.0.9, 9.72.0.10\}. Since $E_2$ has trivial torsion and it does not admit cyclic isogeny of degree 9, we are left with following possibilities of $\rho_{E_1,3^{\infty}}(\G_{\Q})$, \{3.8.0.1, 9.24.0.2, 9.72.0.8, 9.72.0.9, 9.72.0.10\}. From Table \ref{table:3adicimages} $(\rho_{E_i,3^{\infty}}(\G_{\Q}))_{i=1}^{i=2}$ is given by one of the tuples (3.8.0.1,3.8.0.2), (9.24.0.2,9.24.0.4), (9.72.0.8,9.72.0.16), (9.72.0.9,9.72.0.15), (9.72.0.10,9.72.0.14). See \href{https://lmfdb.org/EllipticCurve/44/a/}{44.a}, \href{https://lmfdb.org/EllipticCurve/196/b/}{196.b}, \href{https://lmfdb.org/EllipticCurve/486/d/}{486.d}, \href{https://lmfdb.org/EllipticCurve/17100/j/}{17100.j} and \href{https://lmfdb.org/EllipticCurve/486/c/}{486.c} respectively.
    \end{enumerate}
\end{proof}

\begin{lemma}\label{lemma:no5isogeny}
    Let $E/\Q$ be a without CM. If $\rho_{E,3^{\infty}}(\G_{\Q})$ is in \{9.18.0.1,9.18.0.2\}, then $E$ cannot admit a degree 5 isogeny.
\end{lemma}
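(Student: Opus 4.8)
The plan is to follow the same fiber-product strategy used throughout this section: to rule out a degree $5$ isogeny under the constraint that $\rho_{E,3^\infty}(\G_{\Q})$ lies in $\{9.18.0.1, 9.18.0.2\}$, I would study rational points on the fiber product of the relevant modular curve with $X_0(5)$. Since $9.18.0.2$ is a subgroup of $9.9.0.1$ (as already used in \cref{cor:9.18.0.2}) and $9.18.0.1$ is likewise a subgroup of $3.3.0.1$, by \cref{lemma:3.3.0.1} and \cref{cor:3.3.0.1} we already know that if $\rho_{E,3^\infty}(\G_{\Q})$ is conjugate to a subgroup of $3.3.0.1$ then $E(\Q)_{\tors}$ cannot contain a point of order $5$. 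However, admitting a degree $5$ \emph{isogeny} is weaker than having a rational $5$-torsion point, so \cref{cor:3.3.0.1} does not immediately suffice; I must work with $X_0(5)$ rather than $X_{\pm 1}(5)$.

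First I would form the fiber product $C_1$ of $X_{9.18.0.1}$ with $X_0(5)$ and the fiber product $C_2$ of $X_{9.18.0.2}$ with $X_0(5)$, using the \texttt{FindModelOfXG} machinery referenced in \cref{sec:background material} to compute their canonical models. Because the two $3$-adic levels here are $9$ and the isogeny level is $5$, with $\gcd(9,5)=1$, these are genuine fiber products in the sense of the definition in \cref{sec:background material}, and their rational points parametrize exactly the non-CM curves $E$ with the prescribed $3$-adic image that also admit a degree $5$ isogeny. I would then compute the genus of each $C_i$ and the rank of its Jacobian, and apply whichever of the techniques summarized at the start of \cref{sec:Analysisofrationalpoints} is appropriate: if a factor curve has genus $1$ and rank $0$ I can use Nagell--Lutz on a Weierstrass model; if the fiber product has genus $2$ with rank-$0$ Jacobian I can use \texttt{Chabauty0}; and if there are no local points at some small prime I can conclude emptiness directly.

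After obtaining $C_i(\Q)$ I would run the \texttt{Cusps} script to identify the rational cusps, then consult LMFDB to flag any CM points among the remaining rational points. The conclusion I am aiming for is that every rational point on each $C_i$ is either a cusp or a CM point, so that no non-CM $E/\Q$ with $\rho_{E,3^\infty}(\G_{\Q})$ in $\{9.18.0.1, 9.18.0.2\}$ can admit a degree $5$ isogeny.

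The main obstacle I anticipate is that $X_0(5)$ has genus $0$ and the $3$-adic curves $X_{9.18.0.1}, X_{9.18.0.2}$ have genus $0$ as well (their RSZB labels record genus $0$), so the fiber products may have genus low enough that rational points exist in abundance and the argument reduces to carefully separating cusps and CM points from any genuine non-CM points rather than to a clean rank-$0$ Chabauty computation. If the fiber product turns out to have positive rank or higher genus, the harder step will be finding a suitable auxiliary morphism or an involution-based sieve, as in the proof of \cref{lemma:9.27.0.2}, to pin down the full set of rational points; I would fall back on the local-obstruction approach (no $\Q_p$ or $\F_p$ points) wherever the global methods stall.
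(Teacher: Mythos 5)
Your general strategy (fiber products with $X_0(5)$, then the standard toolbox of Chabauty, local obstructions, or auxiliary morphisms) is the right family of techniques, and you correctly identify that \cref{cor:3.3.0.1} only controls rational $5$-torsion, not $5$-isogenies. But there is a genuine gap: your stated goal, ``show that every rational point on each $C_i$ is a cusp or a CM point,'' has no concrete path to completion in your plan, and the fallback options you list do not include the mechanism that actually closes the argument. The paper does not analyze the fiber products of $X_{9.18.0.1}$ or $X_{9.18.0.2}$ with $X_0(5)$ directly. Instead it maps both down to the fiber product $C'$ of $X_{3.6.0.1}$ with $X_0(5)$ (both groups are contained in $3.6.0.1$), which is a rank-$0$ elliptic curve with $8$ rational points. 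The crucial point you do not anticipate is that $C'$ has \emph{genuine non-cuspidal non-CM rational points}: four of them, lying over the $j$-invariants of 338.b1 and 338.b2 (this combination of a $5$-isogeny with $3$-adic image $3.6.0.1$ really occurs, and 338.b is even cited later as an example in the $L_2(5)$ proposition). So the ``separate cusps and CM points'' step fails at the level of $C'$, and no local obstruction can exist on $C_1$ or $C_2$ over those fibers unless one first rules the points out globally.

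The missing idea is a quadratic-twist argument on top of the point count: any non-CM curve $E$ admitting a $5$-isogeny with $\rho_{E,3^{\infty}}(\G_{\Q})$ inside $3.6.0.1$ must have $j(E)$ equal to one of the two $j$-invariants above, hence must be a quadratic twist of 338.b1 or 338.b2; and since $3.6.0.1$ has no proper subgroup $H'$ of index $2$ with $\pm H'=3.6.0.1$, every quadratic twist of these curves has $3$-adic image conjugate to all of $3.6.0.1$, never to the strictly smaller groups $9.18.0.1$ or $9.18.0.2$. Without this (or an explicit, verified computation of the rational points of the higher-degree fiber products $C_1$, $C_2$ themselves, whose tractability you do not establish), the proof does not go through.
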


\begin{proof}
    We look at the fiber product $C$ of $X_H$ and $X_0(5)$ where $H$ is in the set \\ \{9.18.0.1,9.18.0.2\}. In both of these cases, there is a map from $C$ to the fiber product $C'$ of $X_{3.6.0.1}$. We will show that any non cuspidal rational point $E$ on $C$ cannot have $\rho_{E,3^{\infty}}(\G_{\Q})$ conjugate to $H.$ The modular curve $C'$ is an elliptic curve with Weierstrass model \[y^2+(x+1)y=x^3+x^2-5x+2\] that has $8$ rational points. There are four rational cusps and remaining four points correspond to two non CM elliptic curves \href{https://lmfdb.org/EllipticCurve/Q/338/b/2}{338.b2} and \href{https://lmfdb.org/EllipticCurve/Q/338/b/1}{338.b1} that have $j$-invariants $\frac{1331}{8}$ and $\frac{-1680914269}{32768}$ respectively. If there is a non cuspidal point on $C$ then it lies above either of these $j$-invariants so it must be quadratic twist of 338.b2 or 338.b1 but for any quadratic twist of these curves $\rho_{E,3^{\infty}}(\G_{\Q})$ must be conjugate to 3.6.0.1 because 3.6.0.1 has no proper subgroups $H'$ of index $2$ such that $\pm H'=3.6.0.1.$
\end{proof}

\begin{proposition}
    Let $E/\Q$ be without CM. If the isogeny-torsion graph associated to $E$ is of type $L_2(5)$, then one of the following holds.
    \begin{enumerate}
        \item If some vertex has a torsion point of order 5, then $\rho_{E,3^{\infty}}(\G_{\Q})$ is $\GL_2(\ZZ_3).$

        \item If all vertices have trivial torsion, then either $\rho_{E,3^{\infty}}(\G_{\Q})$ is $\GL_2(\ZZ_3)$ or it lies in the set \{3.3.0.1, 3.6.0.1, 9.9.0.1\}.
    \end{enumerate} 
\end{proposition}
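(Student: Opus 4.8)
The plan is to reduce to a single vertex and then eliminate candidates from the list of possible $3$-adic images furnished by \cref{thm:no3isogeny}, using the torsion and isogeny obstructions already proved. Since an $L_2(5)$ graph has exactly one edge, of degree $5$, the two vertices $E_1,E_2$ are linked by a degree $5$ isogeny; as $3\neq 5$, \cref{lemma:isomorphismofisogenis} shows $\rho_{E_1,3^{\infty}}(\G_{\Q})$ and $\rho_{E_2,3^{\infty}}(\G_{\Q})$ are conjugate in $\GL_2(\ZZ_3)$, so it suffices to determine the common image. Moreover $E$ admits a degree $5$ isogeny and, having only the single degree-$5$ edge, admits no degree $3$ isogeny, so by \cref{thm:no3isogeny} the image is either $\GL_2(\ZZ_3)$ or carries one of the labels 3.3.0.1, 3.6.0.1, 9.9.0.1, 9.18.0.1, 9.18.0.2, 9.27.0.1, 9.27.0.2, 27.243.12.1.

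For the first bullet I would assume some vertex carries a rational point of order $5$ and apply the analysis to that vertex; such a point provides simultaneously a rational $5$-torsion point and a degree $5$ isogeny. Each non-$\GL_2(\ZZ_3)$ label is then excluded: the labels 3.3.0.1, 3.6.0.1, 9.9.0.1, 9.18.0.1 and 9.18.0.2 cannot carry a point of order $5$ by \cref{lemma:3.3.0.1}, \cref{cor:3.3.0.1} and \cref{cor:9.18.0.2}; the labels 9.27.0.1 and 9.27.0.2 cannot admit a degree $5$ isogeny by \cref{lemma:9.27.0.1} and \cref{lemma:9.27.0.2}; and 27.243.12.1 is impossible because the Remark following \cref{prop:L_1} forces any curve with that image into an $L_1$ graph. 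Hence the image is $\GL_2(\ZZ_3)$, and by the conjugacy above this holds at both vertices.

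For the second bullet I would instead assume all vertices have trivial torsion, keeping in mind that the degree $5$ isogeny is still present. Now \cref{lemma:no5isogeny} discards 9.18.0.1 and 9.18.0.2, while \cref{lemma:9.27.0.1} and \cref{lemma:9.27.0.2} discard 9.27.0.1 and 9.27.0.2, all four because they forbid a degree $5$ isogeny; and 27.243.12.1 is again ruled out by the same Remark. What remains is precisely $\GL_2(\ZZ_3)$ together with 3.3.0.1, 3.6.0.1 and 9.9.0.1, which is the asserted bound. To conform to the style of the neighbouring propositions I would finish by exhibiting explicit LMFDB curves realizing each of the surviving possibilities.

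No step poses a genuine difficulty, since every exclusion is quoted from an earlier result; the only point needing care is the candidate 27.243.12.1, whose non-CM non-cuspidal rational points are not known to exist. I would handle it purely formally: irrespective of existence, any such point would force an $L_1$ graph by the Remark after \cref{prop:L_1}, which is incompatible with type $L_2(5)$, so the label drops out in both bullets.
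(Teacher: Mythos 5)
Your proposal is correct and follows essentially the same route as the paper: reduce to one vertex via \cref{lemma:isomorphismofisogenis}, list the candidates from \cref{thm:no3isogeny}, and eliminate them with \cref{lemma:3.3.0.1}, \cref{cor:3.3.0.1}, \cref{cor:9.18.0.2}, \cref{lemma:9.27.0.1}, \cref{lemma:9.27.0.2} in the torsion case and with \cref{lemma:no5isogeny} together with the 9.27.0.$*$ lemmas in the trivial-torsion case. Your explicit disposal of 27.243.12.1 via the remark after \cref{prop:L_1} is if anything slightly more careful than the paper, which leaves that label implicit; otherwise the two arguments coincide.
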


\begin{proof}
\noindent
\begin{enumerate}
    \item If some vertex has torsion point of order 5, then we know from \cref{lemma:3.3.0.1}, \cref{cor:3.3.0.1}, \cref{cor:9.18.0.2}, \cref{lemma:9.27.0.1} and \cref{lemma:9.27.0.2} that the only possibility for $\rho_{E,3^{\infty}}(\G_{\Q})$ is $\GL_2(\ZZ_3).$

    \item If all vertices have trivial torsion, then from \cref{lemma:no5isogeny} and \cref{lemma:9.27.0.1} we know that either $\rho_{E,3^{\infty}}(\G_{\Q})$ is $\GL_2(\ZZ_3)$ or it is in \{3.3.0.1, 3.6.0.1, 9.9.0.1\}. All of these possibilities occur. See, \href{https://lmfdb.org/EllipticCurve/Q/1369/a/}{1369.a}, \href{https://lmfdb.org/EllipticCurve/Q/338/b/}{338.b} and \href{https://lmfdb.org/EllipticCurve/Q/43264/f/}{43264.f}.
\end{enumerate}
     
\end{proof}

\begin{proposition}
Let $E/\Q$ be without CM.
    \begin{enumerate}
        \item  If the isogeny-torsion graph associated to $E$ is of type $L_2(7)$, then $\rho_{E,3^{\infty}}(\G_{\Q})$ is $\GL_2(\ZZ_3).$ 

        \item If the isogeny-torsion graph associated to $E$ is of type $L_2(11)$, then $\rho_{E,3^{\infty}}(\G_{\Q})$ is $\GL_2(\ZZ_3).$
    \end{enumerate}
\end{proposition}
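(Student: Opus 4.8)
The plan is to reduce both statements to Theorem~\ref{thm:no3isogeny} and then dispose of the finitely many exceptional images. If the isogeny torsion graph of $E$ is of type $L_2(p)$ with $p\in\{7,11\}$, then $E$ admits a cyclic isogeny of degree $p$; moreover $E$ admits no degree $3$ isogeny, for otherwise (as $E$ is non-CM, so has no degree $3$ endomorphism, and the two vertices are joined only by the degree $p$ edge) a degree $3$ isogeny would produce a third vertex, contradicting $L_2(p)$. Hence Theorem~\ref{thm:no3isogeny} applies and $\rho_{E,3^{\infty}}(\G_{\Q})$ is either $\GL_2(\ZZ_3)$ or has RSZB label in \{3.3.0.1, 3.6.0.1, 9.9.0.1, 9.18.0.1, 9.18.0.2, 9.27.0.1, 9.27.0.2, 27.243.12.1\}. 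I would first discard $27.243.12.1$ uniformly: by the Remark following \cref{prop:L_1}, a non-CM curve with that $3$-adic image has isogeny torsion graph of type $L_1$, incompatible with $L_2(p)$. It remains to exclude the seven level-$3$ and level-$9$ labels.

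For $L_2(7)$ this is immediate from the previous section. \Cref{lemma:3.3.0.1} shows that the image $3.3.0.1$ forbids a degree $7$ isogeny; \cref{cor:3.3.0.1} gives the same for $3.6.0.1$, $9.9.0.1$ and $9.18.0.1$; \cref{cor:9.18.0.2} for $9.18.0.2$; and \cref{lemma:9.27.0.1,lemma:9.27.0.2} for $9.27.0.1$ and $9.27.0.2$. Since $E$ does admit a degree $7$ isogeny, none of these seven groups can equal $\rho_{E,3^{\infty}}(\G_{\Q})$, so the image is $\GL_2(\ZZ_3)$. No new computation is needed here.

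For $L_2(11)$ the existing lemmas say nothing, since they treat degree $7$ rather than degree $11$, so this is where the real work lies. My approach is to use that a non-CM curve with a rational $11$-isogeny corresponds to a non-cuspidal rational point of $X_0(11)$, and that $X_0(11)(\Q)$ contributes only finitely many $j$-invariants, of which the single CM value $j=-2^{15}$ is irrelevant here. For every group $H$ in our list we have $-I\in H$, so by \cref{lemma:modular curves} the condition $\rho_{E,3^{\infty}}(\G_{\Q})\subseteq H$ is equivalent to $j(E)\in\pi_H(X_H(\Q))$ and depends only on $j(E)$, not on the quadratic twist. Because $3.6.0.1$, $9.9.0.1$, $9.18.0.1$, $9.18.0.2$ and $9.27.0.2$ are all subgroups of $3.3.0.1$, a point on any of their modular curves maps to a point on $X_{3.3.0.1}$ over the same $j$; thus it suffices to treat the two maximal groups $3.3.0.1$ and $9.27.0.1$. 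I would verify, using the genus-$0$ models of $X_{3.3.0.1}$ and $X_{9.27.0.1}$ together with their $j$-maps, that neither of the non-CM $11$-isogeny $j$-invariants has a rational preimage; equivalently, following the methods of \cref{sec:Analysisofrationalpoints}, that the fiber products of $X_{3.3.0.1}$ and of $X_{9.27.0.1}$ with $X_0(11)$ carry no non-cuspidal non-CM rational point.

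The main obstacle is exactly this last verification for $L_2(11)$: pinning down the non-CM $j$-invariants coming from $X_0(11)$ and then checking that they are missed by the $j$-maps of $X_{3.3.0.1}$ and $X_{9.27.0.1}$ (a rational-root check of degree $3$ and degree $27$ respectively), or equivalently analysing rational points on the two fiber products with $X_0(11)$ and sieving out the cusps and the CM point $j=-2^{15}$. The $L_2(7)$ half, by contrast, is purely formal once Theorem~\ref{thm:no3isogeny} and the isogeny obstructions of the previous section are in hand. With the $L_2(11)$ check completed, both items follow and $\rho_{E,3^{\infty}}(\G_{\Q})=\GL_2(\ZZ_3)$ in each case.
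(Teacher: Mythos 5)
Your proposal is correct and takes essentially the same route as the paper: the $L_2(7)$ case is disposed of by exactly the citations you give, and for $L_2(11)$ the paper likewise computes $X_0(11)(\Q)$ by Nagell--Lutz, finds two cusps, one CM point and the two non-CM curves 121.a1 and 121.a2, and concludes because both of these (hence all their quadratic twists) have maximal $3$-adic image. The only differences are cosmetic: the paper verifies the images of 121.a1 and 121.a2 directly rather than testing their $j$-invariants against the $j$-maps of $X_{3.3.0.1}$ and $X_{9.27.0.1}$, and your appeal to the Remark for 27.243.12.1 is unnecessary, since that group is contained in 3.3.0.1 or 9.27.0.1 and is therefore already excluded by the same isogeny obstructions.
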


\begin{proof}
\noindent
    \begin{enumerate}
        \item From \cref{lemma:3.3.0.1}, \cref{cor:3.3.0.1}, \cref{cor:9.18.0.2}, \cref{lemma:9.27.0.1} and \cref{lemma:9.27.0.2} we know that if $E$ admits a degree 7 isogeny, then the only possibility for $\rho_{E,3^{\infty}}(\G_{\Q})$ is $\GL_2(\ZZ_3).$ 

        \item The genus of $X_0(11)$ is $1$ and it has analytic rank $0.$ Its Weierstrass model is given by the equation $y^2+y=x^{3}-x^{2}-10x-20.$ By using Nagell-Lutz Theorem we compute that there are $5$ rational points on $X_0(11)$, two are cusps, one is a rational CM point and two of them correspond to non CM elliptic curves, \href{https://lmfdb.org/EllipticCurve/Q/121/a/1}{121.a1} and \href{https://lmfdb.org/EllipticCurve/Q/121/a/2}{121.a2}, both of them have maximal 3-adic image. Since $E$ must be a quadratic twist of 121.a1 or 121.a2, it follows that $\rho_{E,3^{\infty}}(\G_{\Q})$ is $\GL_2(\ZZ_3).$
\end{enumerate}
\end{proof}

\begin{lemma}\label{lemma:13isogeny}
    Let $E/\Q$ be without CM. If $\rho_{E,3^{\infty}}(\G_{\Q})$ is conjugate to a subgroup of 3.3.0.1 or 9.27.0.1, then $E$ cannot admit a degree 13 isogeny.
\end{lemma}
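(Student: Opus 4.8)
The plan is to recast both hypotheses as conditions cut out by modular curves and then pass to a fiber product. Since $E$ has no CM, $j(E) \notin \{0,1728\}$, so \cref{lemma:modular curves} applies directly. A degree $13$ isogeny forces $j(E) \in \pi(X_0(13)(\Q))$, while $\rho_{E,3^{\infty}}(\G_{\Q})$ being conjugate to a subgroup of $H$, with $H$ equal to 3.3.0.1 or 9.27.0.1, forces $j(E) \in \pi_H(X_H(\Q))$. As the level of $H$ is a power of $3$ and hence coprime to $13$, I would form the fiber product $C_H \coloneqq X_H \times_{X(1)} X_0(13)$, whose defining group is the intersection of the group of $H$ with the Borel subgroup at $13$; the curve $E$ then gives a non-cuspidal rational point of $C_H$. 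The two groups must be handled separately: unlike 9.27.0.2, the group 9.27.0.1 surjects onto $\GL_2(\ZZ/3\ZZ)$ and so is not conjugate to a subgroup of 3.3.0.1, so its case does not reduce to the first.

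Next I would compute a model of each $C_H$ with \texttt{FindModelOfXG} and analyze $C_H(\Q)$ using the toolkit of \cref{sec:Analysisofrationalpoints}. Here $X_0(13)$ has degree $14$ over the $j$-line and $X_H$ has degree $3$ for 3.3.0.1 but $27$ for 9.27.0.1, so $C_{3.3.0.1}$ has moderate genus whereas $C_{9.27.0.1}$ has very large genus; in all cases $C_H(\Q)$ is finite. In the spirit of \cref{lemma:9.27.0.1} I would first search for a local obstruction — a small prime $p$ of good reduction admitting no $\F_p$-point that lifts, which immediately gives $C_H(\Q)=\varnothing$. Should none be found for the moderate-genus curve $C_{3.3.0.1}$, I would compute the rank of its Jacobian and, if it is $0$, list the rational points with Chabauty0, falling back on the involution sieve of \cref{sec:Analysisofrationalpoints} when the rank is positive; for $C_{9.27.0.1}$ the genus is far too large for these methods, so the argument would have to rest on a local obstruction or on a suitable low-genus quotient.

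Finally, with $C_H(\Q)$ determined, I would discard the cusps with the \texttt{Cusps} script and rule out CM points via the LMFDB, since, as in the earlier lemmas, CM curves whose $3$-adic image meets the non-split Cartan normalizer mod $3$ tend to contribute rational points here. The assertion is that no non-cuspidal, non-CM point survives, so no non-CM $E/\Q$ with the prescribed $3$-adic image admits a degree $13$ isogeny. The main obstacle is exactly this rational-point analysis: the fiber products have large degree over the $j$-line — especially $C_{9.27.0.1}$ — so their genus is liable to be too high for a direct Chabauty computation, and success will hinge either on finding a convenient local obstruction or on exhibiting a quotient of manageable genus and rank.
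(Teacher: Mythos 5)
Your proposal matches the paper's proof in essentially every respect: the paper forms the two fiber products, finds that $X_{3.3.0.1}\times_{X(1)}X_0(13)$ is a genus~$2$ hyperelliptic curve whose Jacobian has rank~$0$ and applies Chabauty0 to see that its only rational points are two cusps, and for $X_{9.27.0.1}\times_{X(1)}X_0(13)$ (genus~$26$, canonical model cut out by $276$ quadrics in $\PP^{25}$) it exhibits exactly the local obstruction you anticipated, namely that the curve has no points modulo~$5$. Your separation of the two cases and your prediction of which technique applies to each are both correct, so the remaining work is only the computation itself.
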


\begin{proof}
    The fiber product of $X_{3.3.0.1}$ and $X_0(13)$ is of genus 2, is hyperelliptic of rank $0.$ Using command \texttt{Chabauty0} we compute all its rational points and both are cusps. The fiber product in other case is of genus 26, its canonical model is cut out by $276$ quadrics in $\PP^{25}$ and has no points modulo 5. This computation can be verified using script \texttt{9.27.0.1} of \cite{RakviGitHub}.
\end{proof}

\begin{proposition}
Let $E/\Q$ be without CM.
\begin{enumerate}
     \item If the isogeny-torsion graph associated to $E$ is of type $L_2(13)$, then $\rho_{E,3^{\infty}}(\G_{\Q})$ is $\GL_2(\ZZ_3).$ 

     \item If the isogeny-torsion graph associated to $E$ is of type $L_2(17)$, then $\rho_{E,3^{\infty}}(\G_{\Q})$ is $\GL_2(\ZZ_3).$ 
     
     \item If the isogeny-torsion graph associated to $E$ is of type $L_2(37)$, then $\rho_{E,3^{\infty}}(\G_{\Q})$ is $\GL_2(\ZZ_3).$ 
     \end{enumerate}
\end{proposition}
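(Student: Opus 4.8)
The plan is to handle the three cases uniformly at the start and then split. In each case the isogeny torsion graph has exactly two vertices joined by a single edge of prime label $p\in\{13,17,37\}$, so neither vertex admits a degree $3$ isogeny (otherwise the graph would be larger than $L_2(p)$). By \cref{lemma:isomorphismofisogenis} the two vertices have conjugate $3$-adic image, so it suffices to bound the image at one vertex $E$. Since $E$ admits no degree $3$ isogeny, \cref{prop:L_1} shows that $\rho_{E,3^{\infty}}(\G_{\Q})$ is either $\GL_2(\ZZ_3)$ or conjugate to one of the groups in \{3.3.0.1, 3.6.0.1, 9.9.0.1, 9.18.0.1, 9.18.0.2, 9.27.0.1, 9.27.0.2, 27.243.12.1\}. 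The label 27.243.12.1 is excluded immediately by the remark following \cref{prop:L_1}: a non-CM curve giving a non-cuspidal point on $X_{27.243.12.1}$ can only have isogeny torsion graph of type $L_1$, contradicting $L_2(p)$. Thus in every case the problem reduces to eliminating the seven remaining non-maximal groups.

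For $L_2(13)$ I would invoke \cref{lemma:13isogeny} directly. Each of 3.6.0.1, 9.9.0.1, 9.18.0.1, 9.18.0.2 and 9.27.0.2 is conjugate to a subgroup of 3.3.0.1, as recorded in the proofs of \cref{cor:3.3.0.1}, \cref{cor:9.18.0.2} and \cref{lemma:9.27.0.2}, while 3.3.0.1 and 9.27.0.1 occur by name. Since $E$ admits a degree $13$ isogeny, \cref{lemma:13isogeny} forbids $\rho_{E,3^{\infty}}(\G_{\Q})$ from being conjugate to a subgroup of either 3.3.0.1 or 9.27.0.1, which wipes out all seven groups simultaneously and leaves only $\GL_2(\ZZ_3)$.

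For $L_2(17)$ and $L_2(37)$ I would instead exploit that the relevant isogeny curve has positive genus, mirroring the $L_2(11)$ argument. The modular curve $X_0(17)$ has genus $1$ with rank $0$ Jacobian, so I would read off $X_0(17)(\Q)$ by Nagell--Lutz; the curve $X_0(37)$ has genus $2$, and I would determine its finitely many rational points from the classical description of $X_0(37)(\Q)$. In each case the non-cuspidal points reduce to finitely many $j$-invariants, which I expect to be non-CM with maximal $3$-adic image. After ruling out CM points via LMFDB and confirming maximality with \texttt{FindOpenImage.m}, I would conclude, exactly as for $L_2(11)$, that every quadratic twist $E$ of such a curve again satisfies $\rho_{E,3^{\infty}}(\G_{\Q})=\GL_2(\ZZ_3)$, since maximality of the $3$-adic image is preserved under quadratic twisting.

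The main obstacle I anticipate is the $L_2(37)$ subcase: the Jacobian $J_0(37)$ has rank $1$, so the rank-$0$ routine \texttt{Chabauty0} used elsewhere in the paper does not apply, and I would have to fall back on classical (Coleman) Chabauty, valid because the rank $1$ is strictly less than the genus $2$, or else cite the known determination of $X_0(37)(\Q)$. The remaining steps, namely verifying that the two resulting $j$-invariants are non-CM and that their associated curves have $3$-adic image equal to $\GL_2(\ZZ_3)$, are routine with the cited scripts.
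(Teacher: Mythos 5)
Your proposal is correct and follows essentially the same route as the paper: Lemma \ref{lemma:13isogeny} disposes of $L_2(13)$ since every non-maximal group from Theorem \ref{thm:no3isogeny} lies in 3.3.0.1 or 9.27.0.1, while $L_2(17)$ and $L_2(37)$ are handled by determining $X_0(17)(\Q)$ (genus $1$, rank $0$) and citing the known description of $X_0(37)(\Q)$ due to Mazur, then checking that the finitely many non-CM $j$-invariants have maximal $3$-adic image, which is preserved under quadratic twist. Your extra remarks (excluding 27.243.12.1 via the remark after Proposition \ref{prop:L_1}, and noting that $J_0(37)$ has rank $1$ so one must cite the classical result rather than run \texttt{Chabauty0}) are accurate but do not change the argument.
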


\begin{proof}
\noindent
\begin{enumerate}
    \item From \cref{lemma:13isogeny}, the only possibility for $\rho_{E,3^{\infty}}(\G_{\Q})$ is $\GL_2(\ZZ_3)$ because any subgroup from Theorem \ref{thm:no3isogeny} is contained in either 3.3.0.1 or 9.27.0.1.
    
    \item The genus of $X_0(17)$ is $1$ and it has rank $0.$ Its Weierstrass model is given by the equation $y^2+(x+1)y=x^{3} - x^{2}-x-14.$ There are $4$ rational points on $X_0(17)$, two are rational cusps and two of them correspond to non CM elliptic curves, \href{https://lmfdb.org/EllipticCurve/Q/14450/b/1}{14450.b1} and \href{https://lmfdb.org/EllipticCurve/Q/14450/b/2}{14450.b2}. Both of them have maximal 3-adic image. Since $E$ must be a quadratic twist of 14450.b1 or 14450.b2, it follows that $\rho_{E,3^{\infty}}(\G_{\Q})$ is $\GL_2(\ZZ_3).$

      \item We know that there are only two non cuspidal points on $X_0(37)$ \cite{MR482230} and both of them correspond to non CM elliptic curves with labels \href{https://lmfdb.org/EllipticCurve/Q/1225/b/1}{1225.b1} and \href{https://lmfdb.org/EllipticCurve/Q/1225/b/2}{1225.b2} which have maximal 3-adic image. Since $E$ must be a quadratic twist of 1225.b1 or 1225.b2, it follows that $\rho_{E,3^{\infty}}(\G_{\Q})$ is $\GL_2(\ZZ_3).$
      \end{enumerate}
\end{proof}

We will now discuss linear graphs of type $L_3(p^2)$ for $p \in \{3,5\}$, i.e.,
\begin{center}

\begin{tikzcd}[ampersand replacement=\&]
E_1 \arrow[r, "p", no head] \& E_2 \arrow[r, "p", no head] \& E_3
\end{tikzcd}
\end{center}

\begin{lemma}\label{lemma:3.3.0.1xX0(25)}
    Let $E/\Q$ be without CM such that $\rho_{E,3^{\infty}}(\G_{\Q})$ is conjugate to subgroup of 3.3.0.1. Then, $E$ cannot admit a cyclic isogeny of degree 25.
\end{lemma}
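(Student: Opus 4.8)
The plan is to convert both hypotheses into a single rational-point problem and then apply the toolkit of \cref{sec:Analysisofrationalpoints}. A cyclic isogeny of degree $25$ corresponds to a non-cuspidal rational point on $X_0(25)$, whereas $\rho_{E,3^{\infty}}(\G_{\Q})$ being conjugate to a subgroup of $3.3.0.1$ is a condition at the prime $3$. Since the levels $3$ and $25$ are coprime, an $E/\Q$ satisfying both conditions gives a non-cuspidal rational point on the fiber product $C \coloneqq X_{3.3.0.1} \times_{X(1)} X_0(25)$, a modular curve of level $75$; it therefore suffices to show that $C$ has no non-cuspidal non-CM rational point. First I would compute the canonical model of $C$ with \texttt{FindModelOfXG}. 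A Riemann--Hurwitz computation for $C \to X_{3.3.0.1}$ (a degree-$30$ cover of a genus-$0$ curve, where the fibres over $j=0,1728,\infty$ are obtained from those of $X_0(25) \to X(1)$ by taking $\lcm$ of ramification indices) shows that $C$ has genus $4$.

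The genus being $4$ rules out a direct application of \texttt{Chabauty0}, and the presence of a rational cusp on $C$ (the point lying over the unique cusp of $X_{3.3.0.1}$ and the rational cusp $0$ of $X_0(25)$ is itself rational, so $C(\Q_p)\neq\emptyset$ for every $p$) rules out a naive global $p$-adic obstruction. Instead I would imitate the argument of \cref{lemma:9.27.0.2}: locate an involution $\iota$ on $C$, pass to the quotient, and use that $C$ dominates the genus-$0$ curves $X_{3.3.0.1}$, $X_0(25)$ and $X_{3.3.0.1}\times_{X(1)}X_0(5)$ together with the elliptic curve $X_{3.6.0.1}\times_{X(1)}X_0(5)$ of \cref{lemma:no5isogeny}, so that $J_C$ decomposes into factors of small dimension. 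For each $P\in C(\Q)$ the class $P-\iota(P)$ lies in $J_C(\Q)_{\tors}$; reducing modulo several primes $p\nmid 15$ of good reduction bounds $J_C(\Q)_{\tors}$, and a sieve on the residue discs then pins down $C(\Q)$ exactly.

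Having determined $C(\Q)$, I would run the \texttt{Cusps} script to discard the rational cusps and use LMFDB to verify that every remaining rational point is a CM point, so that $C$ has no non-cuspidal non-CM rational point; this gives the lemma. The hard part is precisely the genus-$4$ point count: the rank of $J_C$ is likely positive, so one cannot simply enumerate torsion preimages of the Abel--Jacobi map, and one must instead control $J_C(\Q)_{\tors}$ and carry out a Mordell--Weil sieve through a carefully chosen involution, using only primes away from $3$ and $5$ where $C$ has bad reduction. Should a suitable involution prove elusive, a fallback is to find a single prime $p$ at which none of the non-cuspidal residue classes in $C(\F_p)$ lifts, which would already contradict the existence of a non-cuspidal rational point.
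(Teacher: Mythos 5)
Your strategy is essentially the one the paper uses: reduce to a genus-$4$ fiber product with a rational cusp (so neither \texttt{Chabauty0} nor a local obstruction applies), find an involution $i$, use that the quotient has rank $1$ so that $P-i(P)$ lands in $J_C(\Q)_{\tors}$, bound the torsion by \texttt{ClassGroup} at auxiliary primes, and sieve modulo a small prime. Two remarks on the differences. First, the paper inserts one extra reduction: instead of working with $X_{3.3.0.1}\times_{X(1)}X_0(25)$ it passes to the middle curve $E'$ of the chain $E\to E'\to E''$, which carries two independent $5$-isogenies, and works with $X_{3.3.0.1}\times_{X(1)}X_{sp}(5)$ (legitimate because the $3$-adic condition is isogeny-invariant by \cref{lemma:isomorphismofisogenis}); this is the same curve as yours up to isomorphism, but the split-Cartan presentation is where the explicit model and the explicit automorphism are actually found. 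Second, your proposed source of the decomposition of $J_C$ is off: since $3.6.0.1$ is a \emph{subgroup} of $3.3.0.1$, the curve $X_{3.6.0.1}\times_{X(1)}X_0(5)$ covers rather than is covered by the relevant quotients, so $C$ does not dominate it; the only decomposition the argument needs (and the one the paper uses) is $J_C\sim E\times V$ coming from the involution quotient itself, with $E$ of rank $1$ and $V$ of rank $0$. Beyond that, everything that makes the lemma true — the explicit involution, the verification that it has no rational fixed points, the bound $J_C(\Q)_{\tors}\subseteq \ZZ/2\ZZ\times\ZZ/6\ZZ$ from primes $7$ and $13$, and the sieve modulo $7$ showing the only rational points are the two cusps — is exactly the computation your proposal defers, so as written it is a correct plan rather than a proof.
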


\begin{proof}
    Assume that there exists such an elliptic curve. Then, there exist elliptic curves $E'$ and $E''$ which are isogenous to $E$ arranged like $E \to E' \to E''$ with a degree 5 isogeny between $E,E'$ and between $E',E''.$ Here $E'$ has two independent degree 5 isogenies, therefore $\rho_{E',5}(\G_{\Q})$ is contained in the split cartan subgroup of $\GL_2(\ZZ/5\ZZ).$ Hence, it suffices to show that there are no non cuspidal non CM points on the fiber product $C$ of $X_{3.3.0.1}$ and $X_{sp}(5).$ It has LMFDB label \href{https://beta.lmfdb.org/ModularCurve/Q/15.90.4.a.1/}{15.90.4.1}. Its Jacobian has rank $1$. The canonical model for $C$ is given by equations\[6x^2+xy+4y^2-xz+2yz-z^2=0,
         x^2y+xy^2-y^3+4xyz+2y^2z+4yz^2+w^3=0.\] The Jacobian of $C$ has rank 1. There are two obvious rational points on $C$, when $y=w=0$, we get $P_1:=(1/2,0,1,0)$ and $P_2:=(-1/3,0,1,0).$

We find an automorphism $i \colon C(\Q) \to C(\Q)$ given by \begin{align*}
    [x,y,z,w] \to & \bigg[\frac{3}{11}y^4 + \frac{7}{11}y^3z + \frac{20}{11}y^2z^2 + \frac{25}{11}yz^3 + \frac{1}{55}yw^3 + \frac{7}{11}zw^3, \\
&  y^4 - \frac{6}{11}yw^3, \\
& \frac{-3}{11}y^4 + \frac{4}{11}y^3z - \frac{20}{11}y^2z^2 - \frac{25}{11}yz^3 - \frac{1}{55}yw^3 - \frac{13}{11}zw^3,\\
& y^3w - \frac{6}{11}w^4\bigg]
\end{align*}
 defined at points when at least one of $y$ or $w$ is non-zero, it can be extended to $P_1$, $P_2$ as well.
 We will now show that there are no fixed points of $i.$ Assume there is a fixed point, i.e, $i(P)=P$. From \[x^2y+xy^2-y^3+4xyz+2y^2z+4yz^2+w^3=0\] we know that if $y=0$, then $w=0.$ So, assume that $y \ne 0$, if $y^4-\frac{6}{11}yw^3=y$, then since $y \ne 0$, we can take $y=1$ so $1-\frac{6}{11}w^3=1$ so $w=0.$ If $w=0$ and $y=1$, then plugging these values in equation \[\frac{-3}{11}y^4 + \frac{4}{11}y^3z - \frac{20}{11}y^2z^2 - \frac{25}{11}yz^3 - \frac{1}{55}yw^3 - \frac{13}{11}zw^3=z\] we get \[\frac{-3}{11} + \frac{4}{11}z - \frac{20}{11}z^2 - \frac{25}{11}z^3 =z\] which has no rational solutions.
 
 The quotient of $C$ by this automorphism is an elliptic curve $E$ of rank $1.$ So, there exists an abelian variety, say $V$ such that the Jacobian of $C$, denoted by $J_C$ decomposes into $E \times V.$ The rank of $E \times V$ is sum of rank of $E$ and rank of $V$ which is equal to rank of $J_C$. Since $E$ has rank $1$ and $J_C$ also has rank $1$, the rank of $V$ is $0$. For any point $P \in C(\Q)$, the point $P-i(P)$ lies in $J_C(\Q)_{\tors}.$ Hence, it suffices to compute preimages of rational points in $J_C(\Q)_{\tors}$ under the map $a \colon C(\Q) \to J_C(\Q)_{\tors}$ given by $P \mapsto P-i(P)$ which is injective away from the fixed points of $i.$ We were not able to compute $J_C(\Q)_{\tors}$ but using local computations which we describe below we were able to deduce that $J_C(\Q)_{\tors}$ is a subgroup of $\ZZ/2\ZZ \times \ZZ/6\ZZ$. Let $p$ be an odd prime not equal to $3$ or $5$. From the Lemma in appendix of \cite{MR604840}, we know that $J_C(\Q)_{\tors}$ injects into $J_C(\ZZ/p\ZZ).$ Using \texttt{ClassGroup} command of \texttt{MAGMA} for primes $7$ and $13$ we find that $J_C(\Q)_{\tors}$ must be a subgroup of $\ZZ/2\ZZ \times \ZZ/6\ZZ.$ We checked that there is no point $P \in C(\Q)$ such that $P-i(P) \in J_C(\Q)_{\tors} \injects J_C(\ZZ/7\ZZ)$ is of order 2 or of order 6 as follows. We checked that for any two points $P,Q$ in reduction of $C$ modulo $7$, $P-Q$ is not of order 2 or order 6. There are two points in $C(\Q)$ that are easy to observe, $P_1=(1/2,0,1,0)$ and $P_2=(-1/3,0,1,0)=i(P_1)$. Working modulo $7$, we verify that the divisor $D \colon P_1-P_2$ is of order 3. Further, we observe that $a(P_2)=2D$, $a(P_1)=D$ and there are no fixed points of $i.$ So, we have computed all the rational points of $C$ and finally we verify that $P_1$ and $P_2$ are cusps. So, we are done. These computations can be verified using script \texttt{3.3.0.1 x X0(25)}. 
\end{proof}

\begin{proposition}
    Let $E/\Q$ be without CM. If the isogeny-torsion graph associated to $E$ is of type $L_3(25)$, then   $\rho_{E,3^{\infty}}(\G_{\Q})$ is $\GL_2(\ZZ_3).$
\end{proposition}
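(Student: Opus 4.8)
The plan is to reduce the statement to a single vertex and then eliminate, one at a time, the finitely many candidate $3$-adic images that survive for a curve without a degree-$3$ isogeny.

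First I would record the structure of an $L_3(25)$ graph: by the classification in \cite{chiloyan2021classification} it consists of three vertices $E_1 - E_2 - E_3$ joined by degree-$5$ edges. Consequently every vertex admits a degree-$5$ isogeny, the two end vertices $E_1$ and $E_3$ admit a cyclic isogeny of degree $25$ (the composite $E_1 \to E_2 \to E_3$), and, since the only edges present have degree $5$, no vertex admits a degree-$3$ isogeny. Because $5 \neq 3$, Lemma \ref{lemma:isomorphismofisogenis} shows that the three vertices have pairwise conjugate $3$-adic image, so it suffices to analyze one end vertex, say $E_1$.

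Next, since $E_1$ has no degree-$3$ isogeny, Theorem \ref{thm:no3isogeny} pins $\rho_{E_1,3^{\infty}}(\G_{\Q})$ down to either $\GL_2(\ZZ_3)$ or one of the eight proper subgroups listed there. I would dispose of these in three batches. Six of them—3.3.0.1, 3.6.0.1, 9.9.0.1, 9.18.0.1, 9.18.0.2 and 9.27.0.2—are all contained in 3.3.0.1 (by Corollary \ref{cor:3.3.0.1}, Corollary \ref{cor:9.18.0.2} and Lemma \ref{lemma:9.27.0.2}); since $E_1$ admits a cyclic isogeny of degree $25$, Lemma \ref{lemma:3.3.0.1xX0(25)} excludes all six at once. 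The group 9.27.0.1 is eliminated by Lemma \ref{lemma:9.27.0.1}, which forbids a degree-$5$ isogeny, one that $E_1$ certainly has.

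The only remaining candidate is the conjectural group 27.243.12.1, and this is the step I expect to need the most care, since it is not known whether $X_{27.243.12.1}$ carries any non-cuspidal non-CM rational point. I would handle it not by analyzing that curve directly but via the remark following Proposition \ref{prop:L_1}: if $\rho_{E_1,3^{\infty}}(\G_{\Q})$ were 27.243.12.1, the isogeny torsion graph of $E_1$ could only be $L_1$, contradicting that it is $L_3(25)$. Having excluded every proper subgroup, I conclude $\rho_{E_1,3^{\infty}}(\G_{\Q}) = \GL_2(\ZZ_3)$, and by the conjugacy from Lemma \ref{lemma:isomorphismofisogenis} the same holds at every vertex.
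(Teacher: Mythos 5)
Your reduction to a single vertex via Lemma \ref{lemma:isomorphismofisogenis} and your treatment of seven of the eight proper subgroups from theorem \ref{thm:no3isogeny} match the paper's proof: the paper likewise groups 3.3.0.1, 3.6.0.1, 9.9.0.1, 9.18.0.1, 9.18.0.2 and 9.27.0.2 together as subgroups of 3.3.0.1 and eliminates them with Lemma \ref{lemma:3.3.0.1xX0(25)} using the cyclic degree-$25$ isogeny, and it eliminates 9.27.0.1 with Lemma \ref{lemma:9.27.0.1} using the degree-$5$ isogeny.

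The gap is your handling of 27.243.12.1. You appeal to the remark following Proposition \ref{prop:L_1}, but that remark is explicitly a consequence of ``our results,'' i.e.\ of the full list of propositions classifying the $3$-adic images attached to each graph type --- including the very proposition you are proving. To know that a curve with image 27.243.12.1 cannot sit in an $L_3(25)$ graph, one needs precisely the $L_3(25)$ classification, so invoking the remark here is circular. The paper instead asserts that \emph{every} subgroup appearing in theorem \ref{thm:no3isogeny}, 27.243.12.1 included, is contained in either 3.3.0.1 or 9.27.0.1; granting that containment, 27.243.12.1 is swept up by the same two lemmas as the other groups (if it lies in 3.3.0.1 it contradicts the cyclic $25$-isogeny by Lemma \ref{lemma:3.3.0.1xX0(25)}, and if it lies in 9.27.0.1 it contradicts the degree-$5$ isogeny by Lemma \ref{lemma:9.27.0.1}). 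Replacing your appeal to the remark with this containment repairs the argument; as written, that one step does not stand on its own.
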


\begin{proof}
    It suffices to show that $\rho_{E,3^{\infty}}(\G_{\Q})$ cannot be conjugate to 3.3.0.1 or 9.27.0.1 because any subgroup from Theorem \ref{thm:no3isogeny} is contained in either 3.3.0.1 or 9.27.0.1. From Lemma \ref{lemma:3.3.0.1xX0(25)} and Lemma \ref{lemma:9.27.0.1} we know both of these groups cannot admit a cyclic isogeny of degree 25. Hence, proved.
\end{proof}

\begin{proposition}
    Let $E/\Q$ be without CM. If the isogeny-torsion graph associated to $E$ is of type $L_3(9)$, then one of the following holds. 
    \begin{enumerate}
        \item If all vertices have trivial torsion, then $(\rho_{E_i,3^{\infty}}(\G_{\Q}))_{i=1}^{i=3}$ is given by one of the three tuples (9.12.0.1,3.12.0.1,9.12.0.1), (9.36.0.5,9.36.0.1,9.36.0.4), (9.36.0.6,9.36.0.3,9.36.0.6),\\ (27.36.0.1,9.36.0.2,27.36.0.1).

        \item If two vertices have cyclic torsion of order 3, then of $(\rho_{E_i,3^{\infty}}(\G_{\Q}))_{i=1}^{i=3}$ is given by one of the three tuples (9.24.0.3,3.24.0.1,9.24.0.1) , (9.72.0.11,9.72.0.2,9.72.0.6), (9.72.0.13,9.72.0.4,9.72.0.7), (27.72.0.2,9.72.0.3,27.72.0.1).

        \item Otherwise, $(\rho_{E_i,3^{\infty}}(\G_{\Q}))_{i=1}^{i=3}$ is given by (9.72.0.12,9.72.0.1,9.72.0.5).
    \end{enumerate} 
    
\end{proposition}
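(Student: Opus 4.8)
The plan is to read off the shape of the graph, list candidates with Theorem \ref{thm:3isogeny}, cut them down by torsion via \cref{lemma:torsion3} and \cref{lemma:torsion9}, and finally glue the three vertices together with Table \ref{table:3adicimages}. Writing the graph as $E_1 \to E_2 \to E_3$, the first thing I would record is what linearity forces. The composite $E_1 \to E_3$ is a cyclic isogeny of degree $9$, so each endpoint $E_1,E_3$ admits a cyclic degree-$9$ isogeny; in particular its mod-$9$ image lies in a Borel subgroup of $\GL_2(\ZZ/9\ZZ)$ and it has a single rational order-$3$ subgroup. The middle curve $E_2$ instead carries two distinct rational order-$3$ subgroups, namely the kernels of $E_2 \to E_1$ and $E_2 \to E_3$, so $\rho_{E_2,3}(\G_{\Q})$ is contained in the split Cartan subgroup of $\GL_2(\ZZ/3\ZZ)$; being the interior vertex, $E_2$ admits no cyclic degree-$9$ isogeny. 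Since the two kernels at $E_2$ are distinct lines in $E_2[3]$, the composite $E_1 \to E_3$ is automatically cyclic, so no configuration ``doubles back'' and every admissible triple genuinely realizes an $L_3(9)$ graph.

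Next I would produce the candidate images. By Theorem \ref{thm:3isogeny} the image at each vertex belongs to the finite list attached to curves admitting a degree-$3$ isogeny, and I would sort that list into endpoint-type labels (those admitting a cyclic degree-$9$ isogeny, i.e.\ with mod-$9$ image in a Borel) and middle-type labels (those admitting two independent degree-$3$ isogenies but no cyclic degree-$9$ isogeny), by inspecting the mod-$3$ and mod-$9$ reductions of each group.

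I would then split along torsion. Using \cref{lemma:torsion3} to pick out exactly the labels carrying a rational point of order $3$, \cref{lemma:torsion9} to pick out the unique label giving $\ZZ/9\ZZ$, and the classification of isogeny torsion graphs \cite{chiloyan2021classification} to list the admissible torsion patterns of $L_3(9)$, I would treat: (a) no vertex with a rational $3$-torsion point, discarding every label on the order-$3$ list of \cref{lemma:torsion3}; (b) exactly two (necessarily adjacent, namely $E_2$ and one endpoint) vertices with $\ZZ/3\ZZ$; and (c) the remaining pattern, in which one endpoint carries $\ZZ/9\ZZ$. In each case these constraints prune the middle-type and endpoint-type lists to a short set.

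Finally I would glue with Table \ref{table:3adicimages}. Orienting both edges toward $E_2$, each endpoint maps to $E_2$ under a degree-$3$ isogeny, so by the table both $\rho_{E_1,3^{\infty}}(\G_{\Q})$ and $\rho_{E_3,3^{\infty}}(\G_{\Q})$ lie in the fiber $f^{-1}\big(\rho_{E_2,3^{\infty}}(\G_{\Q})\big)$ of the table map $f$. For each surviving middle-type label $M$ this fiber has one or two labels; matching them against the pruned endpoint lists and the torsion pattern leaves exactly the ordered triples asserted, and each triple is then shown to occur by exhibiting an explicit non-CM curve. The hard part will be precisely this last bookkeeping at $E_2$: the table value depends on a fixed choice of kernel and $f$ is genuinely non-injective (for example both $9.36.0.4$ and $9.36.0.5$ map to $9.36.0.1$), so one must match the two distinct Galois-stable order-$3$ subgroups of $E_2[3]$ with the correct endpoint labels. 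This is the delicate step, and it is the one I would verify with the accompanying computations.
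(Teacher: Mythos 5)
Your proposal is correct and takes essentially the same route as the paper's proof: both restrict the endpoint images to the subgroups of 9.12.0.1 forced by the cyclic degree-$9$ isogeny, split into the same three torsion cases via Lemmas~\ref{lemma:torsion3} and~\ref{lemma:torsion9}, and assemble the triples from Table~\ref{table:3adicimages}. Your reorganization of the last step as computing fibers of the table map over the middle label, together with the explicit warning that this map is non-injective and depends on the choice of kernel at $E_2$ (e.g.\ both 9.36.0.4 and 9.36.0.5 map to 9.36.0.1), is if anything more careful than the paper's forward propagation $E_1\to E_2\to E_3$, but it is the same method rather than a different one.
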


\begin{proof} Since $E_1$ and $E_3$ admits a cyclic isogeny of degree 9, their 3-adic image is in the set \{9.12.0.1, 9.24.0.1, 9.24.0.3, 9.36.0.4, 9.72.0.5, 9.72.0.11, 9.36.0.5, 9.72.0.6, 9.72.0.12, 9.36.0.6, 9.72.0.7, 9.72.0.13, 27.36.0.1, 27.72.0.1, 27.72.0.2\} because these are the only subgroups of 9.12.0.1. 
    \begin{enumerate}
        \item Assume all the vertices have trivial torsion. Then, $\rho_{E_1,3^{\infty}}(\G_{\Q})$ cannot be in the set \{9.24.0.1, 9.24.0.3, 9.72.0.5, 9.72.0.11, 9.72.0.6, 9.72.0.12, 9.72.0.7, 9.72.0.13, 27.72.0.1, 27.72.0.2\} because then $\rho_{E_2,3^{\infty}}(\G_{\Q})$ would have a point of order 3 in its torsion group from Table \ref{table:3adicimages} and Lemma \ref{lemma:torsion3}. For remaining possibilities of $\rho_{E_1,3^{\infty}}(\G_{\Q})$ we look at Table \ref{table:3adicimages} to get $(\rho_{E_i,3^{\infty}}(\G_{\Q}))_{i=1}^{i=3}$. As examples see \href{https://lmfdb.org/EllipticCurve/Q/175/b/}{175.b}, \href{https://lmfdb.org/EllipticCurve/Q/432/b/}{432.b}, \href{https://lmfdb.org/EllipticCurve/Q/22491/u/}{22491.u} and \href{https://lmfdb.org/EllipticCurve/Q/304/c/}{304.c} of arrangements \\(9.12.0.1,3.12.0.1,9.12.0.1 , (9.36.0.5,9.36.0.1,9.36.0.4), (9.36.0.6,9.36.0.3,9.36.0.6) and \\(27.36.0.1,9.36.0.2,27.36.0.1) respectively.

        \item If two vertices have cyclic torsion of order 3, then $\rho_{E_3,3^{\infty}}(\G_{\Q})$ lies in \{9.24.0.1, 9.72.0.5, 9.72.0.6, 9.72.0.7, 27.72.0.1\}. From Lemma \ref{lemma:torsion9} we know that it cannot be 9.72.0.5. So from Table \ref{table:3adicimages} we get that $(\rho_{E_i,3^{\infty}}(\G_{\Q}))_{i=1}^{i=3}$ is given by one of the four tuples (9.24.0.3,3.24.0.1,9.24.0.1) , (9.72.0.11,9.72.0.2,9.72.0.6), (9.72.0.13,9.72.0.4,9.72.0.7), (27.72.0.2,9.72.0.3,27.72.0.1). See examples, \href{https://lmfdb.org/EllipticCurve/Q/26/a/}{26.a}, \href{https://lmfdb.org/EllipticCurve/Q/54/a/}{54.a}, \href{https://lmfdb.org/EllipticCurve/Q/3213/k/}{3213.k} and \href{https://lmfdb.org/EllipticCurve/Q/19/a/}{19.a} respectively.

        \item If $E_3(\Q)_{\tors}$ is cyclic of order 9, then from Lemma \ref{lemma:torsion9} the only possibility for $\rho_{E_3,3^{\infty}}(\G_{\Q})$ is 9.72.0.5. Therefore, from Table \ref{table:3adicimages} $(\rho_{E_i,3^{\infty}}(\G_{\Q}))_{i=1}^{i=3}$ is given by (9.72.0.12,9.72.0.1,9.72.0.5). See \href{https://lmfdb.org/EllipticCurve/Q/54/b/}{54.b}.
    \end{enumerate}
\end{proof}

\subsection{Rectangular graphs}

From Table $3$ of \cite{chiloyan2021classification} we know that there are two types of rectangular isogeny graphs, \begin{center}

\begin{tikzcd}[ampersand replacement=\&]
E_1 \arrow[r, "q", no head] \arrow[d, "p", no head] \& E_2 \arrow[d, "p"', no head] \\
E_3 \arrow[r, "q", no head]                         \& E_4                         
\end{tikzcd}
\end{center} denoted by $R_4(pq)$ and \begin{center}
    
\begin{tikzcd}[ampersand replacement=\&]
E_1 \arrow[r, "3", no head] \arrow[d, "2", no head] \& E_3 \arrow[d, "2"', no head] \arrow[r, "3", no head] \& E_5 \arrow[d, "2", no head] \\
E_2 \arrow[r, "3"', no head]                        \& E_4 \arrow[r, "3"', no head]                         \& E_6                        
\end{tikzcd}
\end{center} denoted by $R_6.$ 

\begin{proposition}
Let $E/\Q$ be without CM. If the isogeny graph associated to $E$ is of type $R_4(6)$, i.e., $(p,q)=(3,2)$ and 

\begin{enumerate}
    \item If all vertices have torsion subgroup defined over $\Q$ isomorphic to $\ZZ/2\ZZ$, then $(\rho_{E_i,3^{\infty}}(\G_{\Q}))_{i=1}^{i=4}$ is given by the four tuple (3.4.0.1, 3.4.0.1, 3.4.0.1, 3.4.0.1).

    \item If some vertex has torsion subgroup defined over $\Q$ isomorphic to $\ZZ/6\ZZ$, then $(\rho_{E_i,3^{\infty}}(\G_{\Q}))_{i=1}^{i=4}$ is given by the four tuple (3.8.0.2,3.8.0.2,3.8.0.1,3.8.0.1).

\end{enumerate} 
\end{proposition}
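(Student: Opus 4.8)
The plan is to collapse the four vertices onto a single degree-$3$ isogeny and then propagate the answer along the degree-$2$ edges. Since $(p,q)=(3,2)$, the two horizontal edges $E_1\to E_2$ and $E_3\to E_4$ are degree-$2$ isogenies, so \cref{lemma:isomorphismofisogenis} shows $\rho_{E_1,3^\infty}(\G_\Q)$ is conjugate to $\rho_{E_2,3^\infty}(\G_\Q)$ and $\rho_{E_3,3^\infty}(\G_\Q)$ is conjugate to $\rho_{E_4,3^\infty}(\G_\Q)$ in $\GL_2(\ZZ_3)$. It therefore suffices to pin down the left column $E_1,E_3$, joined by the vertical degree-$3$ isogeny $E_1\to E_3$; the image of $E_3$ is then read off from that of $E_1$ using Table \ref{table:3adicimages}, and the images of $E_2,E_4$ follow by conjugacy. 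The key structural input is that in an $R_4(6)$ graph each vertex has a single degree-$3$ edge, hence admits a degree-$3$ but no degree-$9$ cyclic isogeny; so by Theorem \ref{thm:3isogeny} every $\rho_{E_i,3^\infty}(\G_\Q)$ lies in the list there and is not conjugate to a subgroup of 9.12.0.1.

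For the first bullet every vertex has a rational point of order $2$ and none of order $3$. First I would delete from the list of Theorem \ref{thm:3isogeny} all groups forcing a rational $3$-torsion point via \cref{lemma:torsion3}, together with 3.12.0.1 (its degree-$3$ partner is 9.12.0.1, which admits a degree-$9$ isogeny). Next I would use the order-$2$ results \cref{lemma:9.12.0.2tors2}, \cref{lemma:9.36.0.7-9.360.9order2} and \cref{lemma:9.36.0.1-9.36.0.627.36.0.1order2} to remove 9.12.0.2 and 9.36.0.1 through 9.36.0.9, each of which forbids a rational $2$-torsion point. The candidates surviving for $E_1$ are 3.4.0.1, 3.8.0.2, 9.24.0.4, 9.72.0.14, 9.72.0.15, 9.72.0.16; imposing through Table \ref{table:3adicimages} that the degree-$3$ partner $E_3$ also carries no rational $3$-torsion eliminates all but 3.4.0.1, since each of the others has its $E_3$-image in the \cref{lemma:torsion3} list. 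Thus $E_1,E_3$, and by conjugacy $E_2,E_4$, all have image 3.4.0.1.

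For the second bullet I would relabel so that $E_3$ has torsion $\ZZ/6\ZZ$. Then $E_3$ has a rational point of order $3$, so \cref{lemma:torsion3} places its image among the order-$3$ groups; discarding those for which $E_3$ or its partner $E_1$ would admit a degree-$9$ isogeny leaves 3.8.0.1, 9.24.0.2, 9.72.0.8, 9.72.0.9, 9.72.0.10. The decisive step is to eliminate the four level-$9$ candidates. Because $E_3$ also has a rational point of order $2$, such a curve would give a non-cuspidal non-CM rational point on the fiber product of $X_H$ with $X_0(2)$ for $H\in\{9.24.0.2,9.72.0.8,9.72.0.9,9.72.0.10\}$, and I would compute these fiber products and show — by a Chabauty0 computation in genus $2$ or a local obstruction at a small prime, exactly as in \cref{lemma:9.12.0.2tors2} and \cref{lemma:9.36.0.1-9.36.0.627.36.0.1order2} — that none carries such a point. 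This forces $\rho_{E_3,3^\infty}(\G_\Q)$ to be 3.8.0.1; Table \ref{table:3adicimages} then gives $\rho_{E_1,3^\infty}(\G_\Q)=$ 3.8.0.2, and conjugacy along the degree-$2$ edges produces the tuple (3.8.0.2, 3.8.0.2, 3.8.0.1, 3.8.0.1).

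I expect this last elimination to be the main obstacle. In contrast to the level-$9$ groups already handled, the four groups 9.24.0.2 and 9.72.0.8--9.72.0.10 do have a rational $3$-torsion point, so they are \emph{not} subgroups of 9.12.0.2 and are untouched by the existing order-$2$ lemmas; ruling them out genuinely requires new rational-point computations on their fiber products with $X_0(2)$. Once those are established, both bullets follow formally, and I would close by exhibiting an explicit curve realizing each configuration — one whose $R_4(6)$ graph has every vertex of torsion $\ZZ/2\ZZ$ for the first bullet, and one carrying a $\ZZ/6\ZZ$ vertex for the second.
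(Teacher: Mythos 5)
Your first bullet is essentially the paper's argument with the eliminations reshuffled: the paper first applies Lemma \ref{lemma:9.12.0.2tors2}, Corollary \ref{lemma:9.36.0.7-9.360.9order2}, Lemma \ref{lemma:9.36.0.1-9.36.0.627.36.0.1order2} and Lemma \ref{lemma:torsion3} to cut the list down to \{3.4.0.1, 3.8.0.2, 3.12.0.1, 9.12.0.1, 9.24.0.3\} and then disposes of the last three by the no-degree-$9$-isogeny observation and Table \ref{table:3adicimages}, whereas you kill the subgroups of 9.12.0.1 first and then use the Table. Either order works, but note that your surviving list still contains 9.24.0.4 and 9.72.0.14--9.72.0.16, which the paper's does not: those four groups are already excluded for a vertex with a rational $2$-torsion point by the existing order-$2$ lemmas (they are subgroups of groups covered by Lemma \ref{lemma:9.12.0.2tors2} and its corollaries). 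In the first bullet this discrepancy is harmless because your Table argument removes them anyway, but it is exactly what creates your difficulty in the second bullet.

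For the second bullet there is a genuine gap: you anchor the argument on the $\ZZ/6\ZZ$ vertex $E_3$, arrive at the candidates 3.8.0.1, 9.24.0.2, 9.72.0.8, 9.72.0.9, 9.72.0.10, and then propose four new rational-point computations on fiber products with $X_0(2)$ which you do not carry out and whose feasibility (genus, rank of the Jacobian, existence of a local obstruction) you do not establish. These computations are not needed. The paper instead anchors on the $\ZZ/2\ZZ$ vertex $E_1$ (the torsion configuration is $(\ZZ/2\ZZ,\ZZ/2\ZZ,\ZZ/6\ZZ,\ZZ/6\ZZ)$), for which the first bullet's analysis --- order-$2$ lemmas plus the degree-$9$ exclusions, but \emph{without} the final no-$3$-torsion condition on $E_3$ --- already forces $\rho_{E_1,3^{\infty}}(\G_{\Q})\in\{3.4.0.1,\,3.8.0.2\}$; then 3.4.0.1 is ruled out because Table \ref{table:3adicimages} would give $E_3$ the image 3.4.0.1, contradicting its rational point of order $3$ via Lemma \ref{lemma:torsion3}. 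Equivalently, in your setup the Table sends your four problematic groups to 9.24.0.4, 9.72.0.16, 9.72.0.15, 9.72.0.14 on $E_1$, and those are precisely the groups the existing order-$2$ lemmas already forbid for a curve with rational $2$-torsion. You should replace the proposed computations with this transfer along the degree-$3$ edge; as written, the proof of the second bullet is incomplete.
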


\begin{proof}
\noindent
\begin{enumerate}
    \item Assume that all vertices have torsion subgroup isomorphic to $\ZZ/2\ZZ.$ Then, from Lemma \ref{lemma:9.12.0.2tors2}, Corollary \ref{lemma:9.36.0.7-9.360.9order2}, Lemma \ref{lemma:9.36.0.1-9.36.0.627.36.0.1order2} and Lemma \ref{lemma:torsion3} we know that $\rho_{E_i,3^{\infty}}(\G_{\Q})$ for $i \in \{1,2,3,4\}$ lies in \{3.4.0.1,3.8.0.2,3.12.0.1,9.12.0.1,9.24.0.3\}. If $\rho_{E_1,3^{\infty}}(\G_{\Q})$ is 3.8.0.2, then from Table \ref{table:3adicimages} $\rho_{E_3,3^{\infty}}(\G_{\Q})$ is 3.8.0.1 which is not possible. If $\rho_{E_1,3^{\infty}}(\G_{\Q})$ is 9.24.0.3, then from Table \ref{table:3adicimages} we know that $\rho_{E_3,3^{\infty}}(\G_{\Q})$ is 3.24.0.1 which is also not possible. If $\rho_{E_1,3^{\infty}}(\G_{\Q})$ is 9.12.0.1, then $E_1$ admits a cyclic isogeny of degree 9 however maximal degree of cyclic isogeny that $E_i$ can possess is degree 6 for $i \in \{1,2,3,4\}.$ If $\rho_{E_1,3^{\infty}}(\G_{\Q})$ is 3.12.0.1, then from Table \ref{table:3adicimages} we know that $\rho_{E_3,3^{\infty}}(\G_{\Q})$ is 9.12.0.1 which is a contradiction. Therefore, $\rho_{E_1,3^{\infty}}(\G_{\Q})$ is 3.4.0.1. From Table \ref{table:3adicimages} $\rho_{E_i,3^{\infty}}(\G_{\Q})$ is 3.4.0.1 for all $i.$ For example of such an isogeny class, see \href{https://lmfdb.org/EllipticCurve/Q/80/b/}{80.b}.

    \item Without loss of generality, we can assume that $(E_i(\Q)_{\tors})_{i=1}^{i=4}$ is $(\ZZ/2\ZZ,\ZZ/2\ZZ,\ZZ/6\ZZ,\ZZ/6\ZZ).$ From argument used in previous part, we know that $\rho_{E_1,3^{\infty}}(\G_{\Q})$ lies in \{3.4.0.1,3.8.0.2\}. It cannot be 3.4.0.1 because from Table \ref{table:3adicimages} we know that $\rho_{E_i,3^{\infty}}(\G_{\Q})$ for $i \in \{2,3,4\}$ will also be 3.4.0.1 which is a contradiction because from Lemma \ref{lemma:torsion3}, $E_3(\Q)_{\tors}$ cannot contain a point of order 3. Therefore, $(\rho_{E_i,3^{\infty}}(\G_{\Q}))_{i=1}^{i=4}$ is given by the four tuple (3.8.0.2,3.8.0.2,3.8.0.1,3.8.0.1). See, \href{https://lmfdb.org/EllipticCurve/Q/20/a/}{20.a}.
    
    \end{enumerate}

\end{proof}

\begin{proposition}
Let $E/\Q$ be without CM. If the isogeny graph associated to $E$ is of type $R_4(10)$ ,i.e., $(p,q)=(2,5)$ then $\rho_{E,3^{\infty}}(\G_{\Q})$ for all vertices is $\GL_2(\ZZ_3).$
\end{proposition}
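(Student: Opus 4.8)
The plan is to reduce to a single vertex, invoke the classification of $3$-adic images of curves with no $3$-isogeny, and then eliminate every proper candidate using the two isogenies carried by each vertex. Since every edge of either graph is labelled $2$, $5$ or $7$, no vertex admits a degree $3$ isogeny, so by \cref{lemma:isomorphismofisogenis} the four $3$-adic images are all conjugate in $\GL_2(\ZZ_3)$ and it suffices to treat one vertex $E$. By Theorem~\ref{thm:no3isogeny}, $\rho_{E,3^{\infty}}(\G_{\Q})$ is either $\GL_2(\ZZ_3)$ or conjugate to a subgroup with RSZB label in the set $\{3.3.0.1, 3.6.0.1, 9.9.0.1, 9.18.0.1, 9.18.0.2, 9.27.0.1, 9.27.0.2, 27.243.12.1\}$. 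The extra input is that in either graph every vertex has exactly one incident edge of label $2$ and one of label $q\in\{5,7\}$: the kernel of the degree $2$ isogeny is a Galois-stable order-$2$ subgroup $\{O,P\}$, so its nontrivial point $P$ is rational. Hence every vertex carries a rational point of order $2$ together with a degree $5$ isogeny (type $R_4(10)$) or a degree $7$ isogeny (type $R_4(14)$). In both cases $27.243.12.1$ is excluded immediately, since by the remark following Proposition~\ref{prop:L_1} a non-CM curve whose $3$-adic image lies in $27.243.12.1$ has isogeny torsion graph of type $L_1$, not rectangular.

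For type $R_4(14)$ I would finish using the degree $7$ isogeny alone: by \cref{lemma:3.3.0.1}, Corollary~\ref{cor:3.3.0.1}, Corollary~\ref{cor:9.18.0.2}, \cref{lemma:9.27.0.1} and \cref{lemma:9.27.0.2}, none of $3.3.0.1$, $3.6.0.1$, $9.9.0.1$, $9.18.0.1$, $9.18.0.2$, $9.27.0.1$, $9.27.0.2$ can occur for a non-CM curve admitting a degree $7$ isogeny. Thus the only surviving possibility is $\GL_2(\ZZ_3)$.

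For type $R_4(10)$ the rational $2$-torsion point does the first round of elimination: by \cref{lemma:9.9.0.1}, \cref{lemma:9.18.0.1}, Corollary~\ref{cor:9.18.0.2}, \cref{lemma:9.27.0.1} and \cref{lemma:9.27.0.2}, none of $9.9.0.1$, $9.18.0.1$, $9.18.0.2$, $9.27.0.1$, $9.27.0.2$ occurs together with a rational point of order $2$. This leaves only $3.3.0.1$ and its subgroup $3.6.0.1$. These cannot be discarded by \cref{lemma:3.3.0.1} alone, which forbids a rational point of order $5$ but \emph{not} a bare degree $5$ isogeny; indeed $3.3.0.1$ genuinely occurs with a degree $5$ isogeny in the trivial-torsion case of type $L_2(5)$. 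Here I would exploit the extra $2$-isogeny: form the fiber product $C$ of $X_{3.3.0.1}$, $X_0(2)$ and $X_0(5)$, equivalently $X_{3.3.0.1}\times_{\PP^1}X_0(10)$, which parametrizes exactly the non-CM curves whose $3$-adic image lies in $3.3.0.1$ and which carry both a rational $2$-torsion point and a degree $5$ isogeny, and show that every rational point of $C$ is a cusp or a CM point. Since $3.6.0.1\subseteq 3.3.0.1$, this excludes both labels, leaving $\GL_2(\ZZ_3)$.

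The main obstacle is this last computation. The curve $C$ has strictly larger genus than the genus $1$ and $2$ fiber products handled earlier, since it imposes a level-$10$ Borel condition simultaneously with the level-$3$ condition cutting out $3.3.0.1$, so the Nagell--Lutz and rank-$0$ \texttt{Chabauty0} shortcuts need not apply directly. I expect to combine a map onto a lower-genus quotient with a well-chosen involution $\iota$ and the projection $P\mapsto P-\iota(P)$ onto the torsion of the Jacobian, sieving modulo several auxiliary primes and adding $\Q_p$- or $\F_p$-obstructions, exactly as in the proofs of \cref{lemma:9.27.0.2} and \cref{lemma:3.3.0.1xX0(25)}, to pin down $C(\Q)$ and verify that no non-cuspidal non-CM point survives.
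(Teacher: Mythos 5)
Your proposal follows essentially the same route as the paper: reduce to a single vertex, use the rational $2$-torsion point coming from the degree $2$ edge to cut the candidate list down to $\{3.3.0.1, 3.6.0.1\}$, dispose of $R_4(14)$ via the degree $7$ isogeny lemmas, and for $R_4(10)$ analyze the fiber product of $X_{3.3.0.1}$ with $X_0(10)$. The only real difference is that the final computation you brace for is much easier than you anticipate: that fiber product is a genus $2$ hyperelliptic curve with rank $0$ Jacobian and Weierstrass model $y^2+(x^3+1)y=4x^3$, so \texttt{Chabauty0} alone shows its four rational points are all cusps, and no involution sieve is needed.
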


\begin{proof}
    From Table 3 of \cite{chiloyan2021classification} we know that there exists at least one vertex $E$ such that $E(\Q)_{\tors}$ is isomorphic to $\ZZ/2\ZZ$, then using Lemma \ref{lemma:9.9.0.1}, Lemma \ref{lemma:9.18.0.1}, Corollary \ref{cor:9.18.0.2}, Lemma \ref{lemma:9.27.0.1} and Lemma \ref{lemma:9.27.0.2} we know that $\rho_{E,3^{\infty}}(\G_{\Q})$ lies in the set \{3.3.0.1,3.6.0.1\}. Since 3.6.0.1 is a subgroup of 3.3.0.1, it suffices to show that if $\rho_{E_,3^{\infty}}(\G_{\Q})$ is conjugate to a subgroup of 3.3.0.1, then $E$ cannot admit a cyclic isogeny of degree 10. Let $C$ be the fiber product of $X_{3.3.0.1}$ and $X_0(10).$ Then, $C$ has genus $2$, rank $0$ has Weierstrass model given by \[ y^{2} + (x^{3} + 1) y =4 x^{3}.\] There are four rational points in $C$ and all of them are cusps. Hence, the proof follows for $R_4(10).$ 
    
\end{proof}

\begin{proposition}\label{prop:R_4(15)}
Let $E/\Q$ be without CM. If the isogeny graph associated to $E$ is of type $R_4(15)$, i.e., $(p,q)=(3,5)$ then one of the following holds: \begin{enumerate}
    \item If none of the vertices have torsion group isomorphic to $\ZZ/3\ZZ$, then $(\rho_{E_i,3^{\infty}}(\G_{\Q}))_{i=1}^{i=4}$ is given by the four tuple (3.4.0.1,3.4.0.1,3.4.0.1,3.4.0.1).

    \item If $E_1$, $E_2$ have torsion subgroup isomorphic to $\ZZ/3\ZZ$ and $E_3$, $E_4$ have trivial torsion, then $(\rho_{E_i,3^{\infty}}(\G_{\Q}))_{i=1}^{i=4}$ is given by the four tuple \\(3.8.0.1,3.8.0.1,3.8.0.2,3.8.0.2).
\end{enumerate}
\end{proposition}

\begin{proof}
    From Proposition \ref{prop:X0(15)} we know that $\rho_{E_1,3^{\infty}}(\G_{\Q})$ lies in \{3.4.0.1, 3.8.0.1, 3.8.0.2.\} If $E_1(\Q)_{\tors}$ is isomorphic to $\ZZ/3\ZZ$, then from Lemma \ref{lemma:torsion3} and Table \ref{table:3adicimages} it follows that $(\rho_{E_i,3^{\infty}}(\G_{\Q}))_{i=1}^{i=4}$ is given by the four tuple (3.8.0.1,3.8.0.1,3.8.0.2,3.8.0.2). For example, see \href{https://lmfdb.org/EllipticCurve/Q/50/a/}{50.a}. Let us assume that none of the vertices have torsion group isomorphic to $\ZZ/3\ZZ$, in this case $\rho_{E_1,3^{\infty}}(\G_{\Q})$ cannot be 3.8.0.1, it follows from Lemma \ref{lemma:torsion3}. It cannot be 3.8.0.2 either because using Table \ref{table:3adicimages} it will follow that $\rho_{E_3,3^{\infty}}(\G_{\Q})$ is 3.8.0.1 which will be a contradiction from Lemma \ref{lemma:torsion3} because we know that none of the vertices have torsion group isomorphic to $\ZZ/3\ZZ$. Therefore, $\rho_{E_1,3^{\infty}}(\G_{\Q})$ is 3.4.0.1 and from Table \ref{table:3adicimages} we know that $(\rho_{E_i,3^{\infty}}(\G_{\Q}))_{i=1}^{i=4}$ is given by the four tuple (3.4.0.1,3.4.0.1,3.4.0.1,3.4.0.1). For example, see \href{https://lmfdb.org/EllipticCurve/Q/50/b/}{50.b}.
\end{proof}

\begin{proposition}
    Let $E/\Q$ be without CM. If the isogeny graph associated to $E$ is of type $R_4(21)$, i.e., $(p,q)=(3,7)$ then one of the following holds: 
    
    \begin{enumerate}
    \item If none of the vertices have torsion group isomorphic to $\ZZ/3\ZZ$, then $(\rho_{E_i,3^{\infty}}(\G_{\Q}))_{i=1}^{i=4}$ is given by the four tuple (3.4.0.1,3.4.0.1,3.4.0.1,3.4.0.1).

    \item If $E_1$, $E_2$ have torsion subgroup isomorphic to $\ZZ/3\ZZ$ and $E_3$, $E_4$ have trivial torsion, then $(\rho_{E_i,3^{\infty}}(\G_{\Q}))_{i=1}^{i=4}$ is given by the four tuple \\(3.8.0.1,3.8.0.1,3.8.0.2,3.8.0.2).
\end{enumerate}
\end{proposition}

\begin{proof}
    The modular curve $X_0(21)$ has genus $1$, rank $0$ and is given by the Weierstrass equation \[y^2+xy=x^3-4x-1.\] It has eight rational points, out of which four are cusps and remaining four points are non cuspidal non CM points which correspond to elliptic curves \href{https://www.lmfdb.org/EllipticCurve/Q/162/b/1}{162.b1}, \href{https://www.lmfdb.org/EllipticCurve/Q/162/b/2}{162.b2}, \href{https://www.lmfdb.org/EllipticCurve/Q/162/b/3}{162.b3} and \href{https://www.lmfdb.org/EllipticCurve/Q/162/b/4}{162.b4}. Therefore, if $E$ is an elliptic curve that admits a cyclic isogeny of degree 21, then $E$ must be a quadratic twist of these four curves. Hence, $\rho_{E,3^{\infty}}(\G_{\Q})$ must lie in \{3.4.0.1, 3.8.0.1, 3.8.0.2\}. Proof follows from similar arguments as given in Proposition \ref{prop:R_4(15)}. See examples \href{https://lmfdb.org/EllipticCurve/Q/1296/f/}{1296.f} and \href{https://lmfdb.org/EllipticCurve/Q/162/b/}{162.b} respectively.
\end{proof}

\begin{proposition}
    Let $E/\Q$ be without CM. If the isogeny graph associated to $E$ is of type $R_6$ then one of the following holds: \begin{enumerate}
        \item If all vertices have torsion group isomorphic to $\ZZ/2\ZZ$, then $(\rho_{E_i,3^{\infty}}(\G_{\Q}))_{i=1}^{i=6}$ is given by the six tuple (9.12.0.1, 9.12.0.1, 3.12.0.1, 3.12.0.1, 9.12.0.1, 9.12.0.1).

        \item If $(E_i(\Q)_{\tors})_{i=1}^{i=6}$ is $(\ZZ/2\ZZ,\ZZ/2\ZZ,\ZZ/6\ZZ,\ZZ/6\ZZ,\ZZ/6\ZZ,\ZZ/6\ZZ)$, then $(\rho_{E_i,3^{\infty}}(\G_{\Q}))_{i=1}^{i=6}$ is given by the six tuple (9.24.0.3, 9.24.0.3, 3.24.0.1, 3.24.0.1, 9.24.0.1, 9.24.0.1).
    \end{enumerate}
\end{proposition}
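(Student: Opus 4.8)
The plan is to propagate constraints around the six vertices using the two structural inputs that are already in place. The vertical (degree-$2$) isogenies preserve the $3$-adic image up to conjugacy by \cref{lemma:isomorphismofisogenis}, so that $\rho_{E_1,3^{\infty}}(\G_{\Q})$ and $\rho_{E_2,3^{\infty}}(\G_{\Q})$ are conjugate, as are the pairs at $(E_3,E_4)$ and $(E_5,E_6)$; the horizontal (degree-$3$) isogenies are governed by Table \ref{table:3adicimages}. First I would read off the isogeny structure from \cite{chiloyan2021classification}: the corner curves $E_1,E_2,E_5,E_6$ each admit a cyclic $9$-isogeny (traversing the two horizontal edges in a row), so their images are subgroups of 9.12.0.1, whereas the central curves $E_3,E_4$ carry two independent $3$-isogenies and no cyclic $9$-isogeny, hence reduce to the split Cartan subgroup modulo $3$ rather than to a Borel.

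The main idea is to pin down the central vertices first and then spread outward. In the first case every torsion group is $\ZZ/2\ZZ$, so by \cref{lemma:torsion3} no vertex has a rational point of order $3$; together with the split-Cartan and no-cyclic-$9$ constraints this forces $\rho_{E_3,3^{\infty}}(\G_{\Q})$ and $\rho_{E_4,3^{\infty}}(\G_{\Q})$ to be 3.12.0.1, the remaining level-$3$ candidates 3.4.0.1 and 3.8.0.2 being too large to embed in the split Cartan. Table \ref{table:3adicimages} then shows that a curve with image 3.12.0.1 is degree-$3$ isogenous only to curves with image 9.12.0.1, so both horizontal neighbours of $E_3$ are 9.12.0.1, and the degree-$2$ conjugacies fill in the rest, yielding the tuple (9.12.0.1,9.12.0.1,3.12.0.1,3.12.0.1,9.12.0.1,9.12.0.1). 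In the second case the central curves have a rational point of order $3$, so \cref{lemma:torsion3} together with the split-Cartan constraint forces $\rho_{E_3,3^{\infty}}(\G_{\Q})$ and $\rho_{E_4,3^{\infty}}(\G_{\Q})$ to be 3.24.0.1; by Table \ref{table:3adicimages} the two degree-$3$ neighbours of a 3.24.0.1 curve carry labels 9.24.0.1 and 9.24.0.3, which I would then assign using torsion: $E_5,E_6$, whose torsion $\ZZ/6\ZZ$ supplies a rational $3$-torsion point, receive 9.24.0.1 (which lies in the order-$3$ set of \cref{lemma:torsion3}), while $E_1,E_2$, with torsion $\ZZ/2\ZZ$, receive 9.24.0.3 (which does not), and the conjugacies complete the six-tuple.

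The step I expect to be the main obstacle is the careful handling of the two distinct degree-$3$ isogenies leaving each central vertex: Table \ref{table:3adicimages} records only a single canonical degree-$3$ image, so I must argue that the second neighbour is recovered as the unique remaining label whose tabulated degree-$3$ image is the central label, and, in the mixed-torsion case, that the two neighbours genuinely split into the distinct labels 9.24.0.1 and 9.24.0.3 rather than coinciding---this is precisely where the torsion asymmetry (one neighbour carrying a rational point of order $3$, the other not) is decisive. A secondary point to check is that the central vertices admit no cyclic $9$-isogeny, which I would justify graph-theoretically, since any such isogeny would have to factor through a corner whose only remaining horizontal edge is the dual back to the centre, so no cyclic subgroup of order $9$ can arise. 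Finally I would exhibit an explicit isogeny class from \cite{LMFDB} realising each of the two configurations.
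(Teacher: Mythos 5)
Your overall strategy (pin down the $3$-adic image at one vertex, then spread it around the graph with \cref{lemma:isomorphismofisogenis} for the degree-$2$ edges and Table \ref{table:3adicimages} for the degree-$3$ edges) matches the paper's, but you anchor at the central vertices $E_3,E_4$, whereas the paper anchors at the corner $E_1$: there, the cyclic $9$-isogeny forces $\rho_{E_1,3^{\infty}}(\G_{\Q})$ to be a subgroup of 9.12.0.1, and the rational $2$-torsion point cuts the list of such subgroups down to $\{9.12.0.1,\, 9.24.0.1,\, 9.24.0.3\}$ via Corollary \ref{lemma:9.36.0.7-9.360.9order2} and Lemma \ref{lemma:9.36.0.1-9.36.0.627.36.0.1order2}, after which the torsion hypotheses and the table finish the job.

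The genuine gap in your version is the step where you conclude that the central image is 3.12.0.1 (resp.\ 3.24.0.1). The constraints you invoke there --- containment in the split Cartan mod $3$, absence of a cyclic $9$-isogeny, and the presence or absence of a rational point of order $3$ --- do not reduce the candidate list to level-$3$ groups. Theorem \ref{thm:3isogeny} contains level-$9$ and level-$27$ groups whose mod-$3$ reduction lies in the split Cartan and which do not force a cyclic $9$-isogeny: the groups 9.36.0.1, 9.36.0.2, 9.36.0.3 (with trivial torsion) and 9.72.0.1 through 9.72.0.4 (with a rational $3$-torsion point) all occur as the \emph{middle} vertex of an $L_3(9)$ graph in the paper's own classification, so they satisfy every condition you impose at $E_3$. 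What actually excludes them is the rational point of order $2$ at $E_3$ (present in both cases, since $\ZZ/2\ZZ$ and $\ZZ/6\ZZ$ each contain one), fed through Lemma \ref{lemma:9.36.0.1-9.36.0.627.36.0.1order2} and Corollary \ref{lemma:9.36.0.7-9.360.9order2} --- a hypothesis your argument never uses at any vertex. Your dismissal of ``the remaining level-$3$ candidates 3.4.0.1 and 3.8.0.2'' presupposes exactly the reduction to level $3$ that is missing. Once that is repaired (either by adding the $2$-torsion lemmas at the centre, or by switching to the paper's corner-first reduction inside 9.12.0.1), the remainder of your propagation, including your careful treatment of the two independent degree-$3$ neighbours via the two directions of Table \ref{table:3adicimages}, goes through.
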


\begin{proof}
    Since $E_1$ admits a cyclic isogeny of degree 9, then $\rho_{E_1,3^{\infty}}(\G_{\Q})$ must be a subgroup of 9.12.0.1. From Corollary \ref{lemma:9.36.0.7-9.360.9order2} and Lemma \ref{lemma:9.36.0.1-9.36.0.627.36.0.1order2} we know that $\rho_{E_1,3^{\infty}}(\G_{\Q})$ lies in \{9.12.0.1,9.24.0.1,9.24.0.3\}. If all vertices have torsion group isomorphic to $\ZZ/2\ZZ$, then $\rho_{E_1,3^{\infty}}(\G_{\Q})$ cannot be 9.24.0.1 from Lemma \ref{lemma:torsion3} or 9.24.0.3 because from Table \ref{table:3adicimages} we know that $\rho_{E_3,3^{\infty}}(\G_{\Q})$ will be 3.24.0.1 which is also a contradiction from Lemma \ref{lemma:torsion3}. Therefore, $\rho_{E_1,3^{\infty}}(\G_{\Q})$ is 9.12.0.1. Using Table \ref{table:3adicimages} we conclude that $(\rho_{E_i,3^{\infty}}(\G_{\Q}))_{i=1}^{i=6}$ is given by the six tuple (9.12.0.1, 9.12.0.1, 3.12.0.1, 3.12.0.1, 9.12.0.1, 9.12.0.1). For example, see \href{https://lmfdb.org/EllipticCurve/Q/98/a/}{98.a}.

    If $(E_i(\Q)_{\tors})_{i=1}^{i=6}$ is $(\ZZ/2\ZZ,\ZZ/2\ZZ,\ZZ/6\ZZ,\ZZ/6\ZZ,\ZZ/6\ZZ,\ZZ/6\ZZ)$, then from Lemma \ref{lemma:torsion3} we know that $\rho_{E_1,3^{\infty}}(\G_{\Q})$ is either 9.12.0.1 or 9.24.0.3, if it is 9.12.0.1 then $\rho_{E_3,3^{\infty}}(\G_{\Q})$ is 3.12.0.1 using Table \ref{table:3adicimages} but then from Lemma \ref{lemma:torsion3} we know that $E_3(\Q)_{\tors}$ cannot be $\ZZ/6\ZZ$ which is a contradiction. Hence, $\rho_{E_1,3^{\infty}}(\G_{\Q})$ is 9.24.0.3. Using Table \ref{table:3adicimages} we conclude that $(\rho_{E_i,3^{\infty}}(\G_{\Q}))_{i=1}^{i=6}$ is given by the six tuple (9.24.0.3, 9.24.0.3, 3.24.0.1, 3.24.0.1, 9.24.0.1, 9.24.0.1). For example, see \href{https://lmfdb.org/EllipticCurve/Q/14/a/}{14.a}.
\end{proof}

\subsection{Isogeny graphs of type $T_k$}

From Table $3$ of \cite{chiloyan2021classification} we know that there are three type $T_k$ graphs

$T_4$ \begin{center}\begin{tikzcd}[ampersand replacement=\&]
E_2 \&                                                                                 \& E_3 \\
    \& E_1 \arrow[ru, "2", no head] \arrow[lu, "2"', no head] \arrow[d, "2"', no head] \&     \\
    \& E_4                                                                             \&    
\end{tikzcd} \end{center}

$T_6$ \begin{center}\begin{tikzcd}[ampersand replacement=\&]
E_2 \&                                                                                 \&                                                        \& E_5 \\
    \& E_1 \arrow[lu, "2"', no head] \arrow[ld, "2", no head] \arrow[r, "2"', no head] \& E_4 \arrow[ru, "2", no head] \arrow[rd, "2"', no head] \&     \\
E_3 \&                                                                                 \&                                                        \& E_6
\end{tikzcd} 
\end{center}

$T_8$  \begin{center}
        \begin{tikzcd}[ampersand replacement=\&]
    \& E_2                                                                            \&                              \& E_7                                                                             \&     \\
    \& E_1 \arrow[u, "2", no head] \arrow[ld, "2"', no head] \arrow[rd, "2", no head] \&                              \& E_6 \arrow[ld, "2"', no head] \arrow[rd, "2", no head] \arrow[u, "2"', no head] \&     \\
E_3 \&                                                                                \& E_4 \arrow[d, "2"', no head] \&                                                                                 \& E_8 \\
    \&                                                                                \& E_5                          \&                                                                                 \&    
\end{tikzcd}
\end{center}

\begin{proposition}
Let $E/\Q$ be without CM. If the isogeny graph associated to $E$ is of type $T_k$ for $k \in \{4,6,8\}$, then $\rho_{E,3^{\infty}}(\G_{\Q})$ at every vertex is $\GL_2(\ZZ_3).$
\end{proposition}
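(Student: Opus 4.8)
The plan is to reduce to a single vertex and then knock out each exceptional $3$-adic image using the torsion constraints already established. First I would record the structural feature common to $T_4$, $T_6$, $T_8$: every edge carries the label $2$, so the whole isogeny class is connected by $2$-power isogenies and in particular \emph{no} vertex admits a degree-$3$ isogeny. Repeatedly applying \cref{lemma:isomorphismofisogenis} with $(p,\ell)=(2,3)$ along the edges, the images $\rho_{E_i,3^{\infty}}(\G_{\Q})$ at all vertices are conjugate in $\GL_2(\ZZ_3)$; hence it suffices to determine the common image $G$. By Theorem~\ref{thm:no3isogeny}, since no vertex admits a degree-$3$ isogeny, $G$ is either $\GL_2(\ZZ_3)$ or lies in the set $\{3.3.0.1, 3.6.0.1, 9.9.0.1, 9.18.0.1, 9.18.0.2, 9.27.0.1, 9.27.0.2, 27.243.12.1\}$, and the task is to eliminate every member of this set.

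The key input is the $2$-torsion carried by the vertices. Every vertex $E$ is the source or the target of a degree-$2$ isogeny; in either case (using the dual isogeny when $E$ is the target) $E$ has a rational cyclic subgroup of order $2$, hence a rational point of order $2$. Moreover each of the three graphs contains a \emph{central} vertex incident to three degree-$2$ edges (for instance $E_1$ in $T_4$, the vertices $E_1,E_4$ in $T_6$, and $E_1,E_4,E_6$ in $T_8$). For such a vertex the three associated rational order-$2$ subgroups $C_i\subseteq E[2]$ (kernels of the three degree-$2$ isogenies out of $E$) are pairwise distinct, since $E/C_i$ recovers the distinct, non-isomorphic neighbors; as $E[2]\cong(\ZZ/2\ZZ)^{2}$ has exactly three subgroups of order $2$, all of them are rational and therefore $E[2]\subseteq E(\Q)$. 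Thus a central vertex has full rational $2$-torsion $\ZZ/2\ZZ\times\ZZ/2\ZZ\subseteq E(\Q)_{\tors}$.

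Now I would eliminate the exceptional groups one family at a time. Because every vertex has a rational point of order $2$, \cref{lemma:9.9.0.1}, \cref{lemma:9.18.0.1}, \cref{cor:9.18.0.2}, \cref{lemma:9.27.0.1}, and \cref{lemma:9.27.0.2} immediately forbid $G$ from being conjugate to (a subgroup of) $9.9.0.1$, $9.18.0.1$, $9.18.0.2$, $9.27.0.1$, or $9.27.0.2$. For the pair $\{3.3.0.1, 3.6.0.1\}$, note $3.6.0.1\subseteq 3.3.0.1$, so it is enough to exclude containment in $3.3.0.1$; here I invoke a central vertex, whose $\ZZ/2\ZZ\times\ZZ/2\ZZ$ is an order-$4$ subgroup of its rational torsion, which \cref{lemma:3.3.0.1} prohibits whenever the image is contained in $3.3.0.1$. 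Since all vertices share the same image up to conjugacy, this rules out $3.3.0.1$ and $3.6.0.1$ at every vertex. Finally, $27.243.12.1$ is excluded by the remark following \cref{prop:L_1}: any non-CM curve with that $3$-adic image has isogeny torsion graph of type $L_1$, whereas a $T_k$ graph has more than one vertex. The only surviving possibility is $G=\GL_2(\ZZ_3)$, which is the assertion.

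The main obstacle is the group $27.243.12.1$: it cannot be dispatched by torsion bookkeeping, because its behavior at $2$ is not constrained by the $3$-adic image, and instead requires the graph-theoretic input recorded in the remark after \cref{prop:L_1}. Once that is granted and the full $2$-torsion at a central vertex together with rational $2$-torsion at every vertex are in hand, the remaining eliminations are a routine pass through the lemmas of \S\ref{sec:Analysisofrationalpoints}.
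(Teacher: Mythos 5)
Your reduction to a single vertex via \cref{lemma:isomorphismofisogenis}, your derivation of a rational point of order $2$ at every vertex, and your observation that a central vertex (three incident degree-$2$ edges, hence three distinct rational order-$2$ subgroups of $E[2]$) has full rational $2$-torsion are all correct; that last point is a self-contained replacement for the paper's appeal to the classification in \cite{chiloyan2021classification} for the existence of a vertex with torsion subgroup of order $4$. Your eliminations of $9.9.0.1$, $9.18.0.1$, $9.18.0.2$, $9.27.0.1$, $9.27.0.2$ via the order-$2$ point, and of $3.3.0.1\supseteq 3.6.0.1$ via the order-$4$ subgroup at a central vertex, agree with the paper's proof, which simply notes that the exceptional groups are contained in $3.3.0.1$ or $9.27.0.1$ and applies \cref{lemma:3.3.0.1} and \cref{lemma:9.27.0.1} to the order-$4$ vertex.

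The genuine problem is your treatment of $27.243.12.1$. Invoking the remark after \cref{prop:L_1} is circular: that remark is asserted to follow ``from our results,'' meaning the full classification of \S\ref{sec:results}, and the only reason a curve with image $27.243.12.1$ cannot have graph type $T_k$ is the very proposition you are proving. Moreover, your stated justification --- that $27.243.12.1$ ``cannot be dispatched by torsion bookkeeping'' --- is false: as the paper records in its proofs for $L_2(13)$ and $L_3(25)$, every group appearing in Theorem~\ref{thm:no3isogeny}, including $27.243.12.1$, is contained in $3.3.0.1$ or in $9.27.0.1$, so either \cref{lemma:3.3.0.1} (no order-$4$ torsion subgroup, contradicting your central vertex) or \cref{lemma:9.27.0.1} (no rational point of order $2$) already excludes it. Replacing your final step with this containment closes the gap; everything else stands.
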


\begin{proof}
    Since all the edges have label 2, it suffices to determine 3-adic image at any one vertex. From \cite{chiloyan2021classification}, we know that if the isogeny graph associated to $E$ is of type $T_k$ for $k \in \{4,6,8\}$, then at least one of the vertices should have torsion subgroup of order 4. From \cref{lemma:3.3.0.1} and \cref{lemma:9.27.0.1} we know that the only possibility for $\rho_{E,3^{\infty}}(\G_{\Q})$ is $\GL_2(\ZZ_3).$
\end{proof}

\subsection{Isogeny graphs of type $S$}
\begin{center}
    
    \begin{tikzcd}[ampersand replacement=\&]
                                                                                                            \& E_3 \arrow[r, "3", no head] \& E_4 \arrow[rd, "2", no head] \&                                                        \\
E_1 \arrow[rrr, "3", no head] \arrow[ru, "2", no head] \arrow[rd, "2"', no head] \arrow[rdd, "2"', no head] \&                             \&                              \& E_2 \arrow[ld, "2", no head] \arrow[ldd, "2", no head] \\
                                                                                                            \& E_5 \arrow[r, "3", no head] \& E_6                          \&                                                        \\
                                                                                                            \& E_7 \arrow[r, "3", no head] \& E_8                          \&                                                       
\end{tikzcd}
\end{center}

\begin{proposition}
    Let $E/\Q$ be without CM. If the isogeny graph associated to $E$ is of type $S$, then exactly one of the 
    following is true: 
    \begin{enumerate}
        \item If $E_1(\Q)_{\tors}$ is isomorphic to $\ZZ/2\ZZ \times \ZZ/2\ZZ$, then $\rho_{E,3^{\infty}}(\G_{\Q})$ of every vertex is 3.4.0.1.

        \item If $E_1(\Q)_{\tors}$ is isomorphic to $\ZZ/2\ZZ \times \ZZ/6\ZZ$, then $\rho_{E_k,3^{\infty}}(\G_{\Q})$ for odd $k$ is 3.8.0.1 and $\rho_{E_k,3^{\infty}}(\G_{\Q})$ for even $k$ is 3.8.0.2.
    \end{enumerate}
\end{proposition}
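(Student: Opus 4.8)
The plan is to collapse the eight‑vertex graph to a single three‑adic image on each of its two ``sides'' and then pin that image down from the torsion data of \cite{chiloyan2021classification} together with the lemmas of \S\ref{sec:isogenies} and the dictionary of Table~\ref{table:3adicimages}. The graph $S$ is assembled from two claws — one centred at $E_1$ with the degree‑$2$ edges to $E_3,E_5,E_7$, one centred at $E_2$ with the degree‑$2$ edges to $E_4,E_6,E_8$ — joined by the four degree‑$3$ edges $E_1E_2$, $E_3E_4$, $E_5E_6$, $E_7E_8$, which form a matching. Since every degree‑$2$ edge is a prime‑to‑$3$ isogeny, \cref{lemma:isomorphismofisogenis} shows that $\rho_{E_k,3^{\infty}}(\G_{\Q})$ is constant up to conjugacy over the odd indices $k\in\{1,3,5,7\}$ and, separately, over the even indices $k\in\{2,4,6,8\}$. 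The single degree‑$3$ edge $E_1E_2$ then reads off the even image from the odd one through Table~\ref{table:3adicimages}, so it suffices to identify $G_{\mathrm{odd}}\coloneqq\rho_{E_1,3^{\infty}}(\G_{\Q})$ and to set $G_{\mathrm{even}}$ equal to its Table‑image.

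Next I would record the torsion of all eight vertices. A degree‑$2$ isogeny is an isomorphism on $3$‑torsion while the degree‑$3$ isogeny kills the rational $3$‑subgroup, so the rational $3$‑torsion is constant on each side and flips across $E_1E_2$; combined with \cite{chiloyan2021classification} this gives, in the first case, hubs $E_1,E_2\cong\ZZ/2\ZZ\times\ZZ/2\ZZ$, leaves with $2$‑part exactly $\ZZ/2\ZZ$, and no rational $3$‑torsion anywhere, and, in the second case, $E_1\cong\ZZ/2\ZZ\times\ZZ/6\ZZ$ with odd leaves $\cong\ZZ/6\ZZ$ while $E_2\cong\ZZ/2\ZZ\times\ZZ/2\ZZ$ with even leaves $\cong\ZZ/2\ZZ$. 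The point to keep in mind is that the order‑$2$ lemma \cref{lemma:9.12.0.2tors2} constrains a vertex whose torsion is \emph{exactly} of order $2$, i.e. a leaf, and not a hub of order $4$ or $12$.

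Now the elimination runs as follows. In both cases $E_1$ has a subgroup of order $4$, so \cref{lemma:3.12.0.1} and \cref{lemma:9.12.0.1order4} force $G_{\mathrm{odd}}\not\subseteq 3.12.0.1$ and $G_{\mathrm{odd}}\not\subseteq 9.12.0.1$ (hence exclude all subgroups of $9.12.0.1$), and \cref{lemma:9.36.0.1-9.36.0.627.36.0.1order2} with \cref{lemma:9.36.0.7-9.360.9order2} excludes the $9.36.*$ and $27.36.0.1$ images because each vertex has a point of order $2$. In the first case, where the odd leaves have order exactly $2$ and no vertex has $3$‑torsion, \cref{lemma:9.12.0.2tors2} also removes $G_{\mathrm{odd}}\subseteq 9.12.0.2$ and \cref{lemma:torsion3} removes the order‑$3$ list, leaving only $3.4.0.1$ and $3.8.0.2$; the latter maps under Table~\ref{table:3adicimages} to $3.8.0.1$, which would give the even vertices a rational $3$‑torsion point they do not have, so $G_{\mathrm{odd}}=G_{\mathrm{even}}=3.4.0.1$. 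In the second case $G_{\mathrm{odd}}$ must lie in the order‑$3$ list of \cref{lemma:torsion3}, and for each surviving candidate the Table‑image $G_{\mathrm{even}}$ is excluded either because it carries $3$‑torsion (contradicting the even vertices) or because it is a subgroup of $9.12.0.2$ (contradicting the even leaves of order $2$, via \cref{lemma:9.12.0.2tors2}); only $3.8.0.1\mapsto 3.8.0.2$ survives, which gives the asserted images. I would then exhibit one LMFDB isogeny class realising each configuration, as in the preceding propositions.

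The main obstacle is not a single deep rational‑point computation but the careful propagation of these elementary torsion constraints. The order‑$2$ lemmas never see the hubs directly, so the level‑$9$ candidates — notably $9.24.0.2$ and $9.72.0.8$--$9.72.0.10$ in the second case — survive every restriction imposed at $E_1$ and are killed only after one reflects them through the degree‑$3$ Table onto the even leaves of order $2$, where \cref{lemma:9.12.0.2tors2} finally bites; this is exactly why the odd leaves of order $6$ in the second case cannot be used in their place. Verifying the subgroup containments in $9.12.0.1$ and $9.12.0.2$ invoked above, and confirming that the torsion labelling of all eight vertices is precisely as stated, are the steps that require the most care.
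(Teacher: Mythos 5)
Your proof is correct and takes essentially the same route as the paper: reduce to a single $3$-adic image on each side of the graph via \cref{lemma:isomorphismofisogenis}, eliminate candidates from \cref{thm:3isogeny} using the torsion lemmas of \cref{sec:Analysisofrationalpoints}, and transfer across the degree-$3$ edge with Table~\ref{table:3adicimages}. The only substantive difference is that you apply \cref{lemma:9.12.0.2tors2} at the order-$2$ leaves and then reflect through the table, whereas the paper applies it directly at the hub $E_1$ --- using the stronger statement its proof actually establishes (no curve with image contained in 9.12.0.2 has any rational point of order $2$, exactly as extracted in \cref{lemma:9.36.0.7-9.360.9order2}) --- which collapses the elimination to the single observation that an order-$4$ torsion subgroup forces the image into $\{3.4.0.1,\, 3.8.0.1,\, 3.8.0.2\}$.
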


\begin{proof}

 Observe that every vertex in an isogeny graph of type $S$ admits a degree 3 isogeny. From \cref{thm:3isogeny}, we know the possibilities for $\rho_{E,3^{\infty}}(\G_{\Q})$ if $E$ admits a degree 3 isogeny. If $E(\Q)_{\tors}$ contains a subgroup of order 4, then from \cref{lemma:3.12.0.1}, \cref{lemma:9.12.0.1order4}, \cref{lemma:9.12.0.2tors2}, \cref{lemma:9.36.0.1-9.36.0.627.36.0.1order2} and \cref{lemma:9.36.0.7-9.360.9order2} we know that $\rho_{E,3^{\infty}}(\G_{\Q})$ is in the set \{3.4.0.1, 3.8.0.1 or 3.8.0.2.\}

    \begin{enumerate}
        \item If $E_1(\Q)_{\tors}$ is isomorphic to $\ZZ/2\ZZ \times \ZZ/2\ZZ$ and if $\rho_{E_1,3^{\infty}}(\G_{\Q})$ is 3.8.0.2, then from \cref{table:3adicimages}, $\rho_{E_2,3^{\infty}}(\G_{\Q})$ is 3.8.0.1 which is not possible because no vertex can admit a torsion point of order 3 (by looking at Table $4$ of \cite{chiloyan2021classification}) and from \cref{lemma:torsion3} we know that if $\rho_{E,3^{\infty}}(\G_{\Q})$ is 3.8.0.1, then $E$ has a torsion point of order 3. From \cref{lemma:isomorphismofisogenis} and \cref{table:3adicimages} we know that if $\rho_{E_1,3^{\infty}}(\G_{\Q})$ is 3.4.0.1, then label of every vertex is 3.4.0.1. For example, see \href{https://www.lmfdb.org/EllipticCurve/Q/150/b/}{150.b}.

        \item  If $E_1(\Q)_{\tors}$ is isomorphic to $\ZZ/2\ZZ \times \ZZ/6\ZZ$, then $E_1(\Q)_{\tors}$ contains a point of order 3 and a subgroup of order 4 so $\rho_{E_1,3^{\infty}}(\G_{\Q})$ is 3.8.0.1. From \cref{lemma:isomorphismofisogenis} and \cref{table:3adicimages} we know that if $\rho_{E_1,3^{\infty}}(\G_{\Q})$ is 3.8.0.1, then label of $\rho_{E_k,3^{\infty}}(\G_{\Q})$ for odd $k$ is 3.8.0.1 and $\rho_{E_k,3^{\infty}}(\G_{\Q})$ for even $k$ is 3.8.0.2. For example, see \href{https://www.lmfdb.org/EllipticCurve/Q/30/a/}{30.a}. 
    \end{enumerate}
\end{proof}

\section*{Acknowledgments}
I would like to thank Eran Assaf, Garen Chiloyan, \'Alvaro Lozano-Robledo, Jeremy Rouse, Andrew Sutherland and David Zureick-Brown for their useful suggestions and several helpful discussions. I would also like to thank anonymous referees for their useful and valuable comments.

\begin{bibdiv}
\begin{biblist}

\bib{MR1484478}{article}{
    author= {Bosma, Wieb}
    author={ Cannon, John}
    author={Playoust,Catherine},
     TITLE = {The {M}agma algebra system. {I}. {T}he user language},
      NOTE = {Computational algebra and number theory (London, 1993)},
     JOURNAL = {Journal of Symbolic Computation},
    VOLUME = {24},
      YEAR = {1997},
    NUMBER = {3-4},
     PAGES = {235--265},
      ISSN = {0747-7171},
   MRCLASS = {68Q40},
  MRNUMBER = {1484478},
       DOI = {10.1006/jsco.1996.0125},
       URL = {http://dx.doi.org/10.1006/jsco.1996.0125}}
       
\bibitem[GC23a]{2302.06094}Garen Chiloyan,
\newblock \textit{2-adic Galois images of non-CM isogeny-torsion graphs}, 2023; 
\newblock arXiv:2302.06094.

\bibitem[GC23b]{2208.11649}Garen Chiloyan,
\newblock \textit{2-adic images of CM isogeny-torsion graphs}, 2023; 
\newblock arXiv:2208.11649.

\bib{chiloyan2021classification}{article}{
 author={Chiloyan, Garen}
 author={ Lozano-Robledo, {\'A}lvaro },
  title={A classification of isogeny-torsion graphs of $\mathbb{Q}$-isogeny classes of elliptic curves},
  
  journal={Transactions of the London Mathematical Society},
  volume={8},
  number={1},
  pages={1--34},
  year={2021},
  publisher={Wiley Online Library}
}

\bib{MR0337993}{article}{
   author={Deligne, P.},
   author={Rapoport, M.},
   title={Les sch\'{e}mas de modules de courbes elliptiques},
   language={French},
   conference={
      title={Modular functions of one variable, II},
      address={Proc. Internat. Summer School, Univ. Antwerp, Antwerp},
      date={1972},
   },
   book={
      publisher={Springer, Berlin},
   },
   date={1973},
   pages={143--316. Lecture Notes in Math., Vol. 349},
   review={\MR{0337993}},
}

\bib{MR604840}{article}{
    AUTHOR = {Katz, Nicholas M.},
     TITLE = {Galois properties of torsion points on abelian varieties},
   JOURNAL = {Invent. Math.},
  FJOURNAL = {Inventiones Mathematicae},
    VOLUME = {62},
      YEAR = {1981},
    NUMBER = {3},
     PAGES = {481--502},
      ISSN = {0020-9910},
   MRCLASS = {14K15 (10D35 14G05)},
  MRNUMBER = {604840},
MRREVIEWER = {Kenneth A. Ribet},
       DOI = {10.1007/BF01394256},
       URL = {https://doi.org/10.1007/BF01394256},
}

\bib{MR482230}{article}{
    AUTHOR = {Mazur, B.},
     TITLE = {Rational isogenies of prime degree (with an appendix by {D}.
              {G}oldfeld)},
   JOURNAL = {Invent. Math.},
  FJOURNAL = {Inventiones Mathematicae},
    VOLUME = {44},
      YEAR = {1978},
    NUMBER = {2},
     PAGES = {129--162},
      ISSN = {0020-9910},
   MRCLASS = {14K07 (10D35 14G25)},
  MRNUMBER = {482230},
MRREVIEWER = {V. V. Shokurov},
       DOI = {10.1007/BF01390348},
       URL = {https://doi.org/10.1007/BF01390348},
}

\bib{MR3383431}{article}{
    AUTHOR = {Najman, Filip},
     TITLE = {The number of twists with large torsion of an elliptic curve},
   JOURNAL = {Rev. R. Acad. Cienc. Exactas F\'{\i}s. Nat. Ser. A Mat. RACSAM},
  FJOURNAL = {Revista de la Real Academia de Ciencias Exactas, F\'{\i}sicas y
              Naturales. Serie A. Matematicas. RACSAM},
    VOLUME = {109},
      YEAR = {2015},
    NUMBER = {2},
     PAGES = {535--547},
      ISSN = {1578-7303},
   MRCLASS = {11G05 (14G25)},
  MRNUMBER = {3383431},
MRREVIEWER = {Jan Nekov\'{a}\v{r}},
       DOI = {10.1007/s13398-014-0199-x},
       URL = {https://doi.org/10.1007/s13398-014-0199-x},
}

\bib{RakviGitHub}{article}{
  author = {Rakvi},
  title = {GitHub repository related to On variation of three adic Galois images of non CM elliptic curves with isogeny},
  year = {2023},
  publisher = {GitHub},
  journal = {GitHub repository},
  howpublished = {\url{https://github.com/Rakvi6893/Variation-of-three-adic-images-with-isogeny}}
}

\bib{MR4468989}{article}{
    AUTHOR = {Rouse, Jeremy}
    author={Sutherland, Andrew V. }
    author={Zureick-Brown,
              David},
     TITLE = {{$\ell$}-adic images of {G}alois for elliptic curves over
              {$\Bbb{Q}$}},
      NOTE = {With an appendix with John Voight},
   JOURNAL = {Forum Math. Sigma},
  FJOURNAL = {Forum of Mathematics. Sigma},
    VOLUME = {10},
      YEAR = {2022},
     PAGES = {Paper No. e62, 63},
   MRCLASS = {11G05 (11F80 11G18 14G35 14H52)},
  MRNUMBER = {4468989},
       DOI = {10.1017/fms.2022.38},
       URL = {https://doi.org/10.1017/fms.2022.38},
}

\bib{MR3500996}{article}{
    AUTHOR = {Rouse, Jeremy }
    author={ Zureick-Brown, David},
     TITLE = {Elliptic curves over $\mathbb{Q}$ and 2-adic images of {G}alois},
   JOURNAL = {Res. Number Theory},
  FJOURNAL = {Research in Number Theory},
    VOLUME = {1},
      YEAR = {2015},
     PAGES = {Paper No. 12, 34},
      ISSN = {2522-0160},
   MRCLASS = {11G05 (11F80)},
  MRNUMBER = {3500996},
MRREVIEWER = {\'{A}lvaro Lozano-Robledo},
       DOI = {10.1007/s40993-015-0013-7},
       URL = {https://doi.org/10.1007/s40993-015-0013-7},
}

\bib{MR1291394}{book}{
   author={Shimura, Goro},
   title={Introduction to the arithmetic theory of automorphic functions},
   series={Publications of the Mathematical Society of Japan},
   volume={11},
   note={Reprint of the 1971 original;
   Kan\^{o} Memorial Lectures, 1},
   publisher={Princeton University Press, Princeton, NJ},
   date={1994},
   pages={xiv+271},
   isbn={0-691-08092-5},
   MRNUMBER={1291394},
}

\bibitem[LMFDB]{LMFDB}The LMFDB Collaboration,
\newblock \textit{The {L}-functions and modular forms database}, 2021;
\newblock {Available at \url{https://www.lmfdb.org/}}

\bib{1508.07660}{article}{
      title={On the possible images of the mod ell representations associated to elliptic curves over $\mathbb{Q}$}, 
      author={David Zywina},
      year={2015},
      eprint={1508.07660},
      archivePrefix={arXiv},
      primaryClass={math.NT},
      url={https://arxiv.org/abs/1508.07660}, 
}

\bib{2206.14959}{article}{
      title={Explicit open images for elliptic curves over $\mathbb{Q}$}, 
      author={David Zywina},
      year={2024},
      eprint={2206.14959},
      archivePrefix={arXiv},
      primaryClass={math.NT},
      url={https://arxiv.org/abs/2206.14959}, 
}

\bib{2501.07833}{article}{
      title={Rational points on the non-split Cartan modular curve of level 27 and quadratic Chabauty over number fields}, 
      author={Jennifer S. Balakrishnan and L. Alexander Betts and Daniel Rayor Hast and Aashraya Jha and J. Steffen Müller},
      year={2025},
      eprint={2501.07833},
      archivePrefix={arXiv},
      primaryClass={math.NT},
      url={https://arxiv.org/abs/2501.07833}, 
}
\end{biblist}
\end{bibdiv}
\end{document}